\newcommand{\Var}{\mathrm{Var}}
\newcommand{\Cov}{\text{Cov}}
\newcommand{\R}{\mathbb{R}}
\def\P{\mathbb{P}}
\def\E{\mathbb{E}}
\newcommand{\cconest}{C^n_{i_1, j_1}(s_1,t_1)}
\newcommand{\cctwost}{C^n_{i_2, j_2}(s_2,t_2)}
\newcommand{\N}{\mathbb{N}}
\newcommand{\im}{\mathrm{im}}
\newcommand{\X}{\mathcal{X}}
\newcommand{\ind}[1]{\mathbf{1}\big\{#1\big\}}
\DeclarePairedDelimiterX{\inp}[2]{\langle}{\rangle}{#1, #2}
\theoremstyle{plain}
\newtheorem{theorem}{Theorem}[section]
\newtheorem{lemma}[theorem]{Lemma}
\newtheorem{proposition}[theorem]{Proposition}
\newtheorem{corollary}[theorem]{Corollary}
\theoremstyle{definition}
\newtheorem{definition}[theorem]{Definition}
\newtheorem{remark}[theorem]{Remark}
\newtheorem{example}[theorem]{Example}
\title[Persistent homology of stationary processes]{Convergence of persistence diagrams \\ for discrete time stationary processes}
\author{Andrew M. Thomas}
\address{Department of Statistics and Actuarial Science, University of Iowa}
\thanks{I would like to thank two anonymous reviewers for their comments which greatly improved the presentation of this article. I would also like to thank the associate editor who noticed an issue with the proof of the CLT. A portion of this work was completed while the author was a postdoctoral associate at Cornell University. This work was funded in part by NSF grants DMS-2114143 and OAC-1940124.} 
\email{andrew-thomas@uiowa.edu}
\date{}
\begin{document}

\begin{abstract}
In this article we establish two fundamental results for the sublevel set persistent homology for stationary processes indexed by the positive integers. The first is a strong law of large numbers for the persistence diagram (treated as a measure ``above the diagonal'' in the extended plane) evaluated on a large class of sets and functions, beyond continuous functions with compact support. We prove this result subject to only minor conditions that the sequence is ergodic and the tails of the marginals are not too heavy. The second result is a central limit theorem for the persistence diagram evaluated on the class of all step functions; this result holds as long as a $\rho$-mixing criterion is satisfied and the distributions of the partial maxima do not decay too slowly. Our results greatly expand those extant in the literature to allow for more fruitful use in statistical applications, beyond idealized settings. Examples of distributions and functions for which the limit theory holds are provided throughout.
\end{abstract}

\maketitle

\vspace{-11pt}

\section{Introduction}

Understanding the persistent homology of large samples from various probability distributions is of increasing utility in goodness-of-fit testing \citep{biscio2020, krebs_fclt}. For goodness-of-fit testing in the ``geometric'' setting there are a number of results to choose from, as much attention has been focused on the limiting stochastic behavior of {\v C}ech and Vietoris-Rips persistent homology of (Euclidean) point clouds \citep[ibid. as well as][]{hiraoka2018, divol_polonik, krebs_polonik, owada2020, krebs2021, owada2022, bobrowski2024}. However, less attention has been focused on the asymptotics of the entire sublevel (or superlevel) set persistent homology of stochastic processes and random fields---with a few notable exceptions \citep{chazal2018, baryshnikov2019, miyanaga_diss, perez2022, hiraoka2022large}.  

In recent years, summaries of sublevel set persistent homology of time series---such as those we establish limit theory for below---have been applied to the problems of heart rate variability analysis \citep{chung2021, graff2021}, eating behavior detection \citep{chung_eat}, and sleep stage scoring using respiratory signals \citep{chung2024}. Thus, a comprehensive treatment of the asymptotic properties of sublevel set persistent homology of stochastic processes is needed for rigorous statistical approaches to the aforementioned problems. In this article we greatly extend the existing limit theory for persistence diagrams derived from sublevel set filtrations of discrete time stochastic processes. As a result, we understand the behavior of certain real-valued summaries of these random persistence diagrams---so-called persistence statistics---that are particularly relevant to machine learning and goodness-of-fit testing.

Work pertaining to the topology of sub/superlevel sets of random functions has its most prominent originator in \citet{rice1944}. 
Current work in the area of establishing results about the sublevel set ($0^{th}$) persistent homology of stochastic processes has focused on almost surely continuous processes, such as investigations into the expected persistence diagrams of Brownian motion \citep{chazal2018}; expected persistence diagrams of Brownian motion with drift \citep{baryshnikov2019}; and expectations for the number of barcodes and persistent Betti numbers $\beta^{s,t}_0$ of continuous semimartingales \citep{perez2022}. The formulas in \citet{perez2022}, save for the expected number of barcodes with lifetime greater than $\ell$, follow asymptotic formulas with $\ell$ tending to 0 or $\infty$. 

Though not overlapping entirely with our setting, some results for cubical persistent homology are applicable here. Notable relevant results include the strong law of large numbers (Theorem 5 in \citealp{hiraoka2022large}) for random cubical sets in $\R^d$ (where the quality of the strong law is vague convergence) and the central limit theorem for persistent Betti numbers of cubical filtrations in $\R^d$ found in Theorem 1.2.3 of \cite{miyanaga_diss}. These results imply a strong law of large numbers for the $(0^{th})$ sublevel set persistence diagrams of $m$-dependent stationary processes and a central limit theorem for the ($0^{th}$) persistent Betti numbers of sublevel sets of i.i.d. sequences, respectively. In this article, we establish a strong law of large numbers for functionals of persistence diagrams that is far more general than those existing in the setting of this article. We do so by normalizing the persistence diagrams so they become probability measures and by leveraging the tools of weak convergence. We also prove a central limit theorem for persistence diagrams evaluated on step functions using recent results for weakly dependent and potentially nonstationary triangular arrays, subject to standard dependence decay conditions on the underlying stationary sequence.

The quality of most strong laws of large numbers for persistence diagrams has been vague convergence, with \cite{hiraoka2018}, \cite{krebs2021}, and \cite{owada2022} tackling the geometric (i.e. {\v C}ech and Vietoris-Rips persistent homology) setting, and \cite{hiraoka2022large} addressing the cubical setting. Recently however, the authors of \cite{bobrowski2024} have employed the weak convergence ideas that we use here to prove a strong law of large numbers for the probability measure defined by death/birth ratios in a persistence diagram, for the geometric setting. In \cite{divol_polonik}---again in the geometric setting---the authors extend the set functions for which the strong law of \cite{hiraoka2018} holds to a class of unbounded functions. 

In Section~\ref{ss:unbounded}, we accomplish this extension as well in the setting of sublevel set persistent homology. We extend the strong law of large numbers (SLLN) for 0-dimensional sublevel set persistence diagrams of stationary processes \citep{hiraoka2022large} from continuous functions with compact support to a large class of unbounded functions. We achieve this based solely on minor conditions such as ergodicity and restrictions on the heaviness of the tails of the marginal distributions of our underlying stochastic process. We also remove the local dependence condition of \cite{hiraoka2022large}. In doing so, we answer an open question of \cite{chung2021} about the limiting empirical distribution of persistence diagram lifetimes for sublevel sets of discrete time stationary processes. For this specific setting, we also derive an explicit representation of the strong limit of our sublevel set persistent betti numbers in Proposition~\ref{p:exp_rep}, answering a query set forth in the conclusion to \cite{hiraoka_cube}. Finally, we extend the current state-of-the art result central limit theorem (CLT) for persistent Betti numbers of sublevel set filtrations of univariate discrete time stochastic processes (derived from Theorem 1.2.3 in \citealp{miyanaga_diss}) to finite-dimensional convergence and beyond the realm of i.i.d. observations.

This article proceeds in Section~\ref{s:background} with a treatment of persistent homology specialized to our setting, as well as details of our probabilistic setup. In Section~\ref{s:slln} the strong law of large numbers is stated and proved (Theorems~\ref{t:strong} and \ref{t:cont_conv}, on pages \pageref{t:strong} and \pageref{t:cont_conv}) and examples for which it holds are given for specific unbounded functionals of persistence diagrams in Corollary~\ref{c:pe_alps}. Beyond this, we derive some satisfying results in the case of i.i.d. stochastic processes in Corollary~\ref{e:cor_uniform} and state a Glivenko-Cantelli theorem for persistence lifetimes in Corollary~\ref{c:gliv_emp}. Finally in Section~\ref{s:clt}, we state the setting and results of our central limit theorem for persistence diagrams (Theorem~\ref{t:bst_clt}, on page \pageref{t:bst_clt}). We conclude with a brief discussion about the potential improvements and extensions of this work in Section~\ref{s:discuss}. Proofs of various lemmas and the central limit theorem are deferred to Section~\ref{s:clt_proof}.

\section{Background} \label{s:background}

We begin by discussing the necessary notions in topological data analysis---specifically 0-dimensional sublevel set persistent homology. From there, we detail crucial results for the representation of zero-dimensional sublevel set persistent homology for stochastic processes. 

Before continuing, let us make a brief note about notation. For a real numbers $x, y$ we define $x \wedge y := \min\{x,y\}$, $x \vee y := \max\{x,y\}$, and $(x)_+ := x \vee 0 = \max\{x, 0\}$. We set $\bar{\R} := [-\infty, \infty]$ and $\R_+ := [0, \infty)$. If $R$ is a set in some topological space, we denote $R^\circ$ the interior (i.e. largest open subset) of $R$ and $\partial R$ its boundary. We denote $B(z, \epsilon)$ to be the open Euclidean ball of radius $\epsilon > 0$ centered at $z$. If for a real sequence $(a_n)_{n \geq 1}$ and a positive sequence $(b_n)_{n \geq 1}$ we have $a_n/b_n \to 0$ as $n \to \infty$, we write $a_n = o(b_n)$; if there exists a $C > 0$ such that $|a_n| \leq Cb_n$ for $n$ large enough, we write $a_n = O(b_n)$.

\subsection{Homology} \label{ss:hom}

Recall that an (abstract) simplicial complex $K$ is a collection of subsets of a set $A$ with the property that it is closed under inclusion. Let $K$ be the graph (i.e. a special case of a simplicial complex) with vertex set $V = \{v_0, v_1, v_2, \dots\}$. To focus on the dependency structure of our stationary processes, we fix the edge set to be
\[
\{ \{v_0, v_1\}, \{v_1, v_2\}, \{v_2, v_3\}, \{v_3, v_4\}, \dots \}.
\] 
Fix a function $f: K \to (-\infty, \infty]$ that satisfies $\tau \subset \sigma$ $\Rightarrow$ $f(\tau) \leq f(\sigma)$ such that there exists some $n \geq 1$ such that $f(v_k) = \infty$ for $k > n$. We then define the \emph{subcomplex} $K(t) := \{\sigma \in K: f(\sigma) \leq t\}$ for $t \in \R$. It is clear that for $s \leq t$ we have $K(s) \subset K(t)$ and thus $\big(K(t)\big)_{t \in \R}$ defines a \emph{filtration} of graphs. For any $t \in \R$ we can assess the connectivity information of $K(t)$ by calculating its \emph{$0$-dimensional homology group $H_0(K(t))$}. We do so by initially forming two vector spaces $C_0$ and $C_1$ of all formal linear combinations of the vertices 
\[
C_0(K(t)) := \Bigg\{ \sum_{i: \, v_i \in K(t)} a_i v_i: a_i \in \mathbb{Z}_2 \Bigg\}
\]
and 
\[
C_1(K(t)) := \Bigg\{ \sum_{i: \, \{v_i, v_{i+1}\} \in K(t)} a_i \{v_i, v_{i+1}\}: a_i \in \mathbb{Z}_2 \Bigg \},
\]
where $\mathbb{Z}_2$ is the field of two elements $\{0,1\}$. The elements of $C_0(K(t))$ and $C_1(K(t))$ are called \emph{0-chains} and \emph{1-chains}, respectively. Addition of $i$-chains in $C_i(K(t))$ is done componentwise. To calculate $H_0(K(t))$ we need to specify the boundary map $\partial_1: C_1(K(t)) \to C_0(K(t))$, which is defined by 
\[
\partial_1\big(\{v_i, v_{i+1}\}\big) = v_i + v_{i+1}.
\]
We can extend this to an arbitrary $c \in C_1(K(t))$ by 
\[
\partial_1(c) = \sum_{i: \, \{v_i, v_{i+1}\} \in K(t)} a_i \partial_1(\{v_i, v_{i+1}\}).
\]

By analogy to the construction above, each vertex in $C_0(K(t))$ gets sent to 0 by $\partial_0$ so $Z_0(K(t)) := \ker \partial_0 = C_0(K(t))$. Defining $B_0(K(t)) := \partial_1\big(C_1(K(t))\big)$ (the image of $\partial_1$), we define the $0^{th}$ homology group as the quotient \emph{vector space},
\[
H_0(K(t)) := Z_0(K(t))/B_0(K(t)).
\]
A more general setup of homology with $\mathbb{Z}_2$ coefficients can be seen in Chapter 4 of \cite{edelsbrunner2010}. 

Note that the definition of $K$ and the boundary maps so defined establishes an exact correspondence with the homology of cubical sets in $\R$ with field coefficients in $\mathbb{Z}_2$ (cf. Chapter 2 of \citealp{kaczynski2006}). In particular, this is the reason we do not investigate higher-order simplices in $K$ as not only do the specific vertices and edges of $K$ preclude higher-dimensional cycles, but there are no elementary cubes of dimension higher than 1 in $\R$. Though we could present our treatment in the language of the cubical setting, we choose to use the equivalent simplicial formulation owing to its broader familiarity.

\subsection{Persistent homology and representations}

The vector spaces\footnote{Conventionally called groups, as coefficients may lie in $\mathbb{Z}$, for example.} $H_0(K(t))$ capture intuitive connectivity information---the elements of $H_0(K(t))$ are the equivalence classes of vertices that satisfy $v+v' \in B_0(K(t))$. More simply put, elements of $H_0(K(t))$ are vertices connected by a chain of edges. The information in $H_0(K(t))$ gives us useful information on the function $f$, but being able to assess how connected components (i.e. elements of $H_0(K(t))$) appear and merge as we vary $t$ would be better. We can do so by introducing the notion of \emph{persistent homology}.
\begin{figure}
\includegraphics[width=\textwidth]{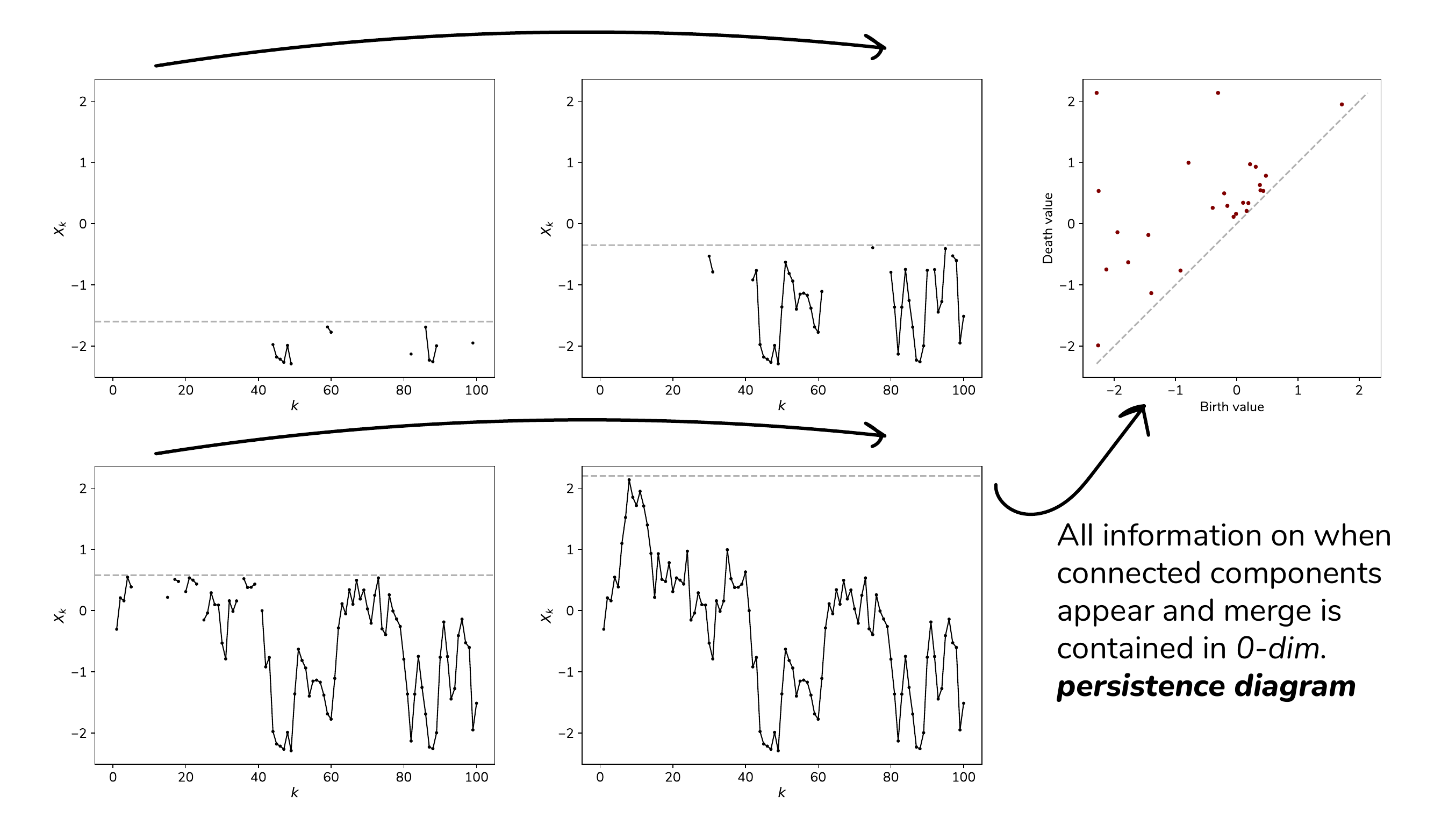}
\caption{Subcomplexes of the filtration $\big(K(t)\big)_{t \in \R}$ of a sample of 100 points from a $8$-dependent stationary Gaussian process along with its $0^{th}$ persistence diagram $PD_0$ (upper right).}
\label{f:stoch_pers}
\end{figure}
Given the inclusion maps $\iota_{s,t}: K(s) \to K(t)$, for $s \leq t$ there exist linear maps between all homology groups
\[
f^{s,t}_0: H_0(K(s)) \to H_0(K(t)), 
\]
which are induced by $\iota_{s,t}$. The \emph{persistent homology groups} of the filtration $(K(t))_{t \in \R}$ are the quotient vector spaces 
$$H^{s,t}_0(K) := \im\, f^{s,t}_0 \cong Z_0(K(s))/\big(B_0(K(t)) \cap Z_0(K(s))\big),$$
whose elements represent the cycles that are ``born'' in $K(s)$ or before and that ``die'' after $K(t)$. The dimensions of these vector spaces are the \emph{persistent Betti numbers} $\beta_0^{s,t}$. Heuristically, a connected component $\gamma \in H_0(K(s))$ is born at $K(s)$ if it appears for the first time in $H_0(K(s))$---formally, $\gamma \not \in H_0(K(r))$, for $r < s$. The component $\gamma \in H_0(K(s))$ dies entering $K(t)$ if it merges with an older class (born before $s$) entering $H_0(K(t))$. The $0^{th}$ persistent homology of $\X$, denoted $PH_0$, is the collection of homology groups $H_0(K(t))$, $t \in \R$, and maps $f^{s,t}_0$, for $-\infty < s \leq t \leq \infty$. $PH_0$ is generically called a \emph{persistence module}, which is a collection of vector spaces indexed by $\R$ with linear maps between each of them \citep{struct_stab}. Define the set $\Delta := \{(x,y) \in \bar{\R}^2: -\infty < x < y \leq \infty\}$. Because of how we have constructed the filtration $\big(K(t)\big)_{t \in \R}$, each $H_0(K(t))$ is finite-dimensional and there exists a unique multiset $PD_0$ consisting of points in $\Delta$ such that the following isomorphism\footnote{For more information on the precise nature of this direct sum and the theorem giving the existence of this decomposition, see Section 2.5 of \cite{struct_stab}.} holds:
$$
PH_0 \cong \bigoplus_{(b,d) \in PD_0} I(b, d).
$$
Here, each persistence module $I(b, d)$ is the collection of vector spaces $(U_t)_{t \geq \R}$ and maps $v^{s}_t$, $-\infty < s \leq t \leq \infty$ defined by 
$$
U_t := 
\begin{cases}
\mathbb{Z}_2 &\mbox{ if } b \leq t < d \\
0 &\mbox{ otherwise},
\end{cases}
$$
where $v^{s}_t$ the identity function on $\mathbb{Z}_2$ if $b \leq s \leq t < d$ and the zero map otherwise (\citealp{struct_stab}; see \citealp{hiraoka2018} for a less formal statement). As such, all of the information in the persistent homology groups is contained in the multiset $PD_0$, which is called the \emph{persistence diagram} \citep{edelsbrunner2010}. Explicitly, the $0^{th}$ persistence diagram $PD_0$ of the filtration $(K(t))_{t \in \R}$ consists of the points $(b, d)$ with multiplicity equal to the number of the connected components that are born at $K(b)$ and die entering $K(d)$. Often, the diagonal $y = x$ is added to this diagram, but we need not consider this here. 
In the same vein, we may represent $PD_0$ as a measure
\[
\xi_0 = \sum_{(b,d) \in PD_0} \delta_{(b,d)},
\] 
on $\Delta := \{(x,y) \in \bar{\R}^2: -\infty < x < y \leq \infty\}$. See Figure~\ref{f:stoch_pers} for an illustration of a persistence diagram associated to a sublevel set filtration of a given stochastic process.

\subsection{Probability and persistence}


Throughout the paper, let us fix a probability space $(\Omega, \mathcal{F}, \P)$. For random variables $X, X_1, X_2, \dots$ we write $X_n \Rightarrow X$ to convey that $X_n$ converges weakly to $X$, i.e. $\E[f(X_n)] \to \E[f(X)]$ for all bounded, continuous $f$. We write $X_n \overset{P}{\to} X$ to convey that $X_n$ converges in probability to $X$. 
We say an event $A \in \mathcal{F}$ occurs ``a.s.'' (almost surely), if $\P(A) = 1$. We use the term \emph{stationary} throughout this work to refer to the strict stationarity of invariance of finite-dimensional distributions under shifts. Namely, if $X_1, X_2, \dots$ are a sequence of random variables, then the sequence is stationary if for any $d, k \geq 1$ we have
$$
\P\big( (X_1, X_{2}, \dots, X_d) \in A\big) = \P\big( (X_{k+1}, X_{k+2}, \dots, X_{k+d}) \in A\big)
$$
for all Borel sets $A \subset \R^d$. Given a Borel set $A \subset \R^d$ and $k \geq 1$, we define the event 
$$
E := \{ \omega \in \Omega: (X_{k}(\omega), X_{k+1}(\omega), \dots, X_{k+d-1}(\omega)) \in A)
$$
and the shifted event $\tilde{E} := \{ \omega \in \Omega: (X_{k+1}(\omega), X_{k+2}(\omega), \dots, X_{k+d}(\omega)) \in A\}$. The event $E$ is said to be \emph{a.s. shift-invariant} if the symmetric difference of $E$ and $\tilde{E}$ satisfies
$\P(E \triangle \tilde{E}) = 0$ \citep{durrett2010}. A stationary sequence $X_1, X_2, \dots$ of random variables is said to be \emph{ergodic} if any a.s. shift-invariant event $E$ satisfies either $\P(E) = 0$ or $\P(E) = 1$.

Recall the definition of $K$ at the start of Section~\ref{ss:hom}. As we are interested in studying the stochastic behavior of persistence diagrams, we want to associate to each vertex $v_i \in K$ a random variable $X_i$ for each $i =0, 1, 2, \dots$. Consider a stationary sequence of random variables $X_1, X_2, \dots$ and define $X_0 \equiv \infty$. We then define the subcomplexes
\[
K_n(t) := \big\{\sigma \in K: \max_{v_i \in \sigma} X_{i,n} \leq t \big\}, \quad t \in \R
\]
where $X_{k, n} = \infty$ for $k > n$ and $X_{k,n} = X_k$ otherwise. Furthermore, set $K_n := \big(K_n(t)\big)_{t \in \R}$ and denote $\beta^{s,t}_{0,n}$, $- \infty < s \leq t \leq \infty$ to be the persistent Betti numbers of $K_n$. Critically, we have the following explicit representation of the $\beta^{s,t}_{0,n}$, whose proof is left to Section~\ref{s:clt_proof}.
\begin{proposition} \label{p:beta0st}
For the filtration $K_n$ we have that for $-\infty < s \leq t < \infty$ that 
\begin{equation}\label{e:beta0st}
\beta_{0,n}^{s,t} = \sum_{i = 1}^{n} \sum_{j=1}^{n-i+1} \mathbf{1} \bigg\{  \bigvee_{k=j}^{j+i-1} X_{k,n} \leq t, \bigwedge_{k=j}^{j+i-1} X_{k,n} \leq s \bigg \}\ind{X_{j-1, n} \wedge X_{j+i, n} > t}.
\end{equation}
and for $s \in \R$, $t = \infty$ we have 
$$
\beta_{0,n}^{s,t} = \mathbf{1} \bigg\{ \bigwedge_{k=1}^{n} X_{k,n} \leq s \bigg\}.
$$
\end{proposition}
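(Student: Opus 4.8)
The plan is to compute $\beta_{0,n}^{s,t}=\dim\im f_0^{s,t}$ directly from the combinatorics of connected components, using that over $\mathbb{Z}_2$ the group $H_0$ of any graph is the free vector space on its set of connected components; no appeal to the interval decomposition of $PH_0$ is needed. First I would record the elementary fact that, for subcomplexes $A\subseteq B$ of $K$, the inclusion-induced map $H_0(A)\to H_0(B)$ sends the class of a vertex $v$ to its class in $H_0(B)$, so its image is spanned by the classes of those connected components of $B$ that contain a vertex of $A$; since distinct components of $B$ contribute linearly independent classes,
\[
\dim\bigl(\im(H_0(A)\to H_0(B))\bigr)=\#\{\,D:\ D\text{ is a connected component of }B\text{ with }D\cap A\neq\emptyset\,\}.
\]
Applying this with $A=K_n(s)$ and $B=K_n(t)$ (legitimate because $s\le t$ gives $K_n(s)\subseteq K_n(t)$) reduces the first identity to counting the connected components of $K_n(t)$ that meet $K_n(s)$.

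The next step is to describe $K_n(t)$ for finite $t$. Since $f(\sigma)=\max_{v_i\in\sigma}X_{i,n}$, an edge $\{v_k,v_{k+1}\}$ lies in $K_n(t)$ exactly when $X_{k,n}\le t$ and $X_{k+1,n}\le t$; hence $K_n(t)$ is the subcomplex of the path $K$ induced on the vertex set $\{v_k: X_{k,n}\le t\}$, which, because $X_{0,n}=\infty$ and $X_{k,n}=\infty$ for $k>n$, is contained in $\{v_1,\dots,v_n\}$. Its connected components are therefore exactly the maximal runs $\{v_j,v_{j+1},\dots,v_{j+i-1}\}$ with $1\le j$, $j+i-1\le n$, such that $\bigvee_{k=j}^{j+i-1}X_{k,n}\le t$ while both neighbouring vertices are absent, i.e. $X_{j-1,n}>t$ and $X_{j+i,n}>t$; the two endpoint conditions hold automatically at $j=1$ and $j+i-1=n$ by the sentinel values $X_{0,n}=X_{n+1,n}=\infty$, and jointly they are the event $\{X_{j-1,n}\wedge X_{j+i,n}>t\}$. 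Such a run meets $K_n(s)$ iff one of its vertices lies in $K_n(s)$, i.e. iff $\bigwedge_{k=j}^{j+i-1}X_{k,n}\le s$. Summing the product of the indicators of these three events over all admissible pairs $(i,j)$, which is precisely the range $1\le i\le n$, $1\le j\le n-i+1$, yields the right-hand side of \eqref{e:beta0st}.

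For $t=\infty$ the complex $K_n(\infty)$ is connected --- it is a single (finite) path on the vertices $v_1,\dots,v_n$, the only ones that ever enter the filtration --- so $\dim H_0(K_n(\infty))=1$, and by the displayed rank formula $\beta_{0,n}^{s,\infty}\in\{0,1\}$ equals $1$ iff $K_n(s)\neq\emptyset$. Finally, $K_n(s)$ is nonempty iff some vertex is present at level $s$, and since only $X_1,\dots,X_n$ can be finite this happens iff $\bigwedge_{k=1}^{n}X_{k,n}\le s$.

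I expect the only point requiring genuine care to be the interplay of ties and boundary effects in the second step: one must verify that a maximal run of vertices present at level $t$ carries all of its internal edges --- so that it genuinely forms one connected component rather than several --- which is exactly the monotonicity of $f$ noted above, and that the conventions $X_0\equiv\infty$ and $X_{k,n}=\infty$ for $k>n$ faithfully encode ``no neighbour present'' at the two ends of the index range $\{1,\dots,n\}$. Everything else is bookkeeping with the free $\mathbb{Z}_2$-structure of $H_0$ and with the ranges of the double sum.
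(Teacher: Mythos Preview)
Your proof is correct and takes a cleaner, more conceptual route than the paper's. The paper works directly in the quotient $Z_0(K_n(s))/\bigl(B_0(K_n(t))\cap Z_0(K_n(s))\bigr)$: it constructs explicit candidate basis elements $[v_{\ell_j}]$ (one vertex chosen from each maximal $t$-run whose minimum dips to level $s$) and then verifies linear independence and spanning by hand via chain-level calculations over $\mathbb{Z}_2$, chasing cancellations along the path until a contradiction is reached. You instead work on the $H_0$ side, invoking the standard fact that $H_0$ of a graph is free on its set of connected components, which immediately identifies $\dim\im f_0^{s,t}$ with the number of components of $K_n(t)$ meeting $K_n(s)$; after that, the identification of components with maximal runs and of the ``meets $K_n(s)$'' condition with $\bigwedge_k X_{k,n}\le s$ is the same combinatorics both arguments share. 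Your approach buys brevity and avoids the somewhat delicate chain bookkeeping; the paper's approach is more self-contained in that it never appeals to the component description of $H_0$ as a black box, instead rederiving exactly what is needed from the definitions of $Z_0$, $B_0$, and the quotient.
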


Having brought forth the representation of persistent Betti numbers that will prove crucial to the results herein, we turn our attention to persistence diagrams. Let $\xi_{0,n}$ be the measure on $\Delta$ associated to the $0^{th}$ persistence diagram $PD_0$ of the filtration $K_n = \big(K_n(t)\big)_{t \in \R}$. Note that 
\[
\beta_{0,n}^{s,t} = \xi_{0,n}\big( (-\infty, s] \times (t, \infty] \big).
\]
If we let
\[
R = (s_1, s_2] \times (t_1, t_2],
\]
for $-\infty < s_1 \leq s_2 \leq t_1 \leq t_2 \leq \infty$, then 
\begin{equation} \label{e:flph}
\xi_{0,n}(R) = \beta_{0,n}^{s_2, t_1} - \beta_{0,n}^{s_2, t_2}  -  \beta_{0,n}^{s_1, t_1} + \beta_{0,n}^{s_1, t_2},
\end{equation}
due to the so-called ``Fundamental Lemma of Persistent Homology'' \citep{edelsbrunner2010}. If $R$ has the above representation, we will say that $s_1, s_2, t_1, t_2$ are the \emph{coordinates} of $R$. We define the class $\mathcal{R}$ of sets by 
\[
\mathcal{R} := \big\{ (s_1, s_2] \times (t_1, t_2]: -\infty < s_1 \leq s_2 \leq t_1 \leq t_2 \leq \infty \big\}.
\] 
An important result holds for the class $\mathcal{R}$, the proof of which is again deferred to Section~\ref{s:clt_proof}. 

\begin{lemma} \label{l:conv_det}
$\mathcal{R}$ is a convergence-determining class for weak convergence on $\Delta$ equipped with the Borel $\sigma$-algebra, $\mathcal{B}(\Delta)$. Namely, if $(\mu_n)_{n}$ and $\mu$ are probability measures on $\Delta$ and 
\[
\mu_n(R) \to \mu(R), \quad n \to \infty,
\]
for all $R \in \mathcal{R}$ such that $\mu(\partial R) = 0$, then 
\[
\mu_n \Rightarrow \mu, \quad n \to \infty. 
\]
Furthermore, for each probability measure $\mu$ on $\Delta$ there is a countable convergence-determining class $\mathcal{R}_\mu \subset \mathcal{R}$ for $\mu$.
\end{lemma}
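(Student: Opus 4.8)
The plan is to exploit the fact that $\Delta$ is a subset of the compact space $\bar{\R}^2$ and that $\mathcal{R}$ consists of half-open rectangles, which behave well with respect to generating the Borel $\sigma$-algebra and with respect to the portmanteau theorem. First I would equip $\bar{\R}$ with a metric making it homeomorphic to $[-1,1]$ (e.g. via $x \mapsto \tanh x$, with $\pm\infty$ going to $\pm 1$), so that $\bar{\R}^2$ is a compact metric space and $\Delta$ is a separable metrizable (indeed locally compact, $\sigma$-compact) subspace. Under this metric the sets in $\mathcal{R}$ are precisely the ``rectangles'' $(s_1,s_2]\times(t_1,t_2]$ with corners in $\Delta$; the key structural observation is that finite intersections of such rectangles are again such rectangles (or empty), so $\mathcal{R}\cup\{\emptyset\}$ is closed under finite intersections, i.e. a $\pi$-system, and it generates $\mathcal{B}(\Delta)$.

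For the first assertion, I would argue via the standard route for convergence-determining classes. Suppose $\mu_n(R)\to\mu(R)$ for all $R\in\mathcal{R}$ with $\mu(\partial R)=0$. Since $\bar{\R}^2$ is compact, the sequence $(\mu_n)$ (viewed on $\bar{\R}^2$, or on $\Delta$ after checking no mass escapes) is tight, so every subsequence has a further subsequence converging weakly to some probability measure $\nu$ on $\Delta$. It then suffices to show $\nu=\mu$. By the portmanteau theorem, along that sub-subsequence $\mu_{n_k}(R)\to\nu(R)$ for every $R$ with $\nu(\partial R)=0$; combined with the hypothesis this forces $\mu(R)=\nu(R)$ for every rectangle $R\in\mathcal{R}$ that is simultaneously a continuity set of $\mu$ and of $\nu$. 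The remaining work is to upgrade this to $\mu=\nu$: because $\mu+\nu$ is a finite measure, for each coordinate only countably many vertical/horizontal lines carry positive $(\mu+\nu)$-mass, so one can choose a countable dense set $D\subset\bar{\R}$ avoiding all of these, and the rectangles with all four coordinates in $D$ are then continuity sets of both $\mu$ and $\nu$, agree on $\mu$ and $\nu$, form a $\pi$-system, and generate $\mathcal{B}(\Delta)$; by Dynkin's $\pi$--$\lambda$ theorem $\mu=\nu$ on $\mathcal{B}(\Delta)$. Since every subsequential weak limit equals $\mu$, we conclude $\mu_n\Rightarrow\mu$.

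For the ``furthermore'' clause, fix $\mu$ and run the coordinate-avoidance argument once more: choose a countable dense $D_\mu\subset\bar{\R}$ missing the (at most countably many) atoms of the two marginal projections of $\mu$, and let $\mathcal{R}_\mu$ be the (countable) collection of rectangles in $\mathcal{R}$ with all coordinates in $D_\mu$. Every $R\in\mathcal{R}_\mu$ satisfies $\mu(\partial R)=0$, and $\mathcal{R}_\mu$ is a countable $\pi$-system generating $\mathcal{B}(\Delta)$; replaying the tightness-plus-Dynkin argument above (now only needing convergence on $\mathcal{R}_\mu$, since its elements are automatically $\nu$-continuity sets once one further thins $D_\mu$ inside the proof to also avoid the subsequential limit's atoms — or, more cleanly, one notes that a probability measure on a locally compact second countable space is determined by its values on any countable $\pi$-system generating the Borel sets and applies portmanteau on that fixed class) shows $\mathcal{R}_\mu$ is convergence-determining for $\mu$.

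The main obstacle I anticipate is the bookkeeping around \emph{boundaries and points at infinity}: a rectangle $(s_1,s_2]\times(t_1,t_2]$ touching the line $y=\infty$ or lying near the diagonal $\partial\Delta=\{x=y\}$ has a boundary (relative to $\Delta$) that must be handled carefully, and one must verify that no mass of the $\mu_n$ escapes to $\partial\Delta$ or to the ``corner'' $\{-\infty\}\times\bar\R$ — i.e. that the limiting objects really live on $\Delta$ and that tightness holds \emph{on $\Delta$}, not merely on $\bar{\R}^2$. The cleanest fix is to do the whole weak-convergence argument on the compact $\bar{\R}^2$, observe that $\mu$ and all subsequential limits assign zero mass to $\bar{\R}^2\setminus\Delta$ (which one can squeeze out using countably many rectangles in $\mathcal{R}$ shrinking toward $\partial\Delta$ and toward infinity, chosen as $\mu$-continuity sets), and then restrict. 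Everything else — the $\pi$-system property of $\mathcal{R}$, the coordinate-avoidance choice of $D$, and the Dynkin argument — is routine.
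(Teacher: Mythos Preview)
Your approach is correct but takes a genuinely different route from the paper. The paper does not argue via compactness, subsequential limits, and Dynkin's $\pi$--$\lambda$ theorem; instead it invokes Theorem~2.4 of \cite{billingsley} directly. That criterion says a $\pi$-system $\mathcal{A}$ on a separable space is convergence-determining as soon as, for every point $z$ and every $\epsilon>0$, the collection $\{A\in\mathcal{A}: z\in A^\circ\subset A\subset B(z,\epsilon)\}$ contains uncountably many sets with pairwise disjoint boundaries. The paper simply checks this: $\mathcal{R}$ is closed under finite intersections, and shrinking a rectangle continuously around $z$ gives a one-parameter family of elements of $\mathcal{R}_{z,\epsilon}$ with disjoint boundaries (with a separate remark for points of the form $(s,\infty)$). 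For the countable class $\mathcal{R}_\mu$, the paper again stays off-the-shelf: it covers each basic open set $U_j$ by countably many $\mu$-null-boundary rectangles $R_{i,j}$, lets $\mathcal{R}_\mu$ be the finite intersections of these, and applies Theorem~2.2 of \cite{billingsley}.

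The trade-off is this: the paper's proof is shorter and sidesteps entirely the tightness/escape-of-mass issue you correctly flag (no subsequential limits on $\bar{\R}^2$ are ever formed, so there is nothing to squeeze back onto $\Delta$). Your argument is more self-contained---it does not require knowing Billingsley's Theorem~2.4---but the price is the bookkeeping you identify in your last paragraph, which, while doable (your ``further thin $D_\mu$ to avoid $\nu$'s marginal atoms, then use inclusion--exclusion on a countable rectangle cover of $\Delta$ to force $\nu(\Delta)=1$'' sketch is the right fix), is noticeably heavier than the paper's two-line verification of the Billingsley hypotheses.
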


To use the weak convergence arguments we employ below, we must also determine how the cardinality of $PD_0$, i.e. $\xi_{0,n}(\Delta)$, behaves. Namely, we show that the value $\xi_{0,n}(\Delta)$ is equal to the number of local minima of $X_{0,n}, X_{1,n}, \dots, X_{n,n}, X_{n+1,n}$. If the value $X_{i,n} = X_i < \infty$ is a (strict) local minimum (i.e. less than $X_{i-1,n}$ and $X_{i+1, n}$) it produces a new connected component in $H_0(K_n(X_i))$ that cannot be equivalent to anything else as $v_i$ belongs to no edges in $K_n$ until filtration time $X_{i-1,n} \wedge X_{i+1,n}$. One can verify this in the bottom right plot of Figure~\ref{f:stoch_pers} and the corresponding persistence diagram. We formalize this idea in the next proposition.
\begin{proposition} \label{p:diag_pts}
Suppose that $X_1, X_2, \dots$ is a stationary sequence of random variables with $\P(X_1 = X_2) = 0$. Then
$$
\xi_{0,n}(\Delta) = \sum_{i=1}^n \ind{X_{i,n} < X_{i-1,n} \wedge X_{i+1, n}}  
$$
\end{proposition}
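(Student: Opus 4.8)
The plan is to count the points of the persistence diagram directly via the representation of persistent Betti numbers in Proposition~\ref{p:beta0st}, evaluated at the extreme corner. Recall that $\xi_{0,n}(\Delta)$ is the total mass of the persistence measure, i.e. the total number of $(b,d)$ pairs (with multiplicity) in $PD_0$ of the filtration $K_n$. I would first observe that $\xi_{0,n}(\Delta) = \xi_{0,n}\big((-\infty,\infty)\times(-\infty,\infty]\big)$, since every birth is finite (each vertex $v_i$ for $1\le i\le n$ enters at the finite time $X_i$, while $v_0$ and $v_k$ for $k>n$ never enter at finite time and contribute nothing). Hence $\xi_{0,n}(\Delta)$ should be recoverable as a limit of $\beta_{0,n}^{s,t}$ as $s\uparrow\infty$, $t\downarrow -\infty$ in the appropriate sense — or, cleaner, by a monotone/telescoping argument using \eqref{e:flph} on the rectangles $(-M,M]\times(M,\infty]$ and letting $M\to\infty$. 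Concretely, for $M$ large enough that $M > \max_{1\le k\le n} X_k$ (which happens for all large $M$ on the a.s. event where all $X_k$ are finite), every point of $PD_0$ lies in $(-M, M]\times(M,\infty]$, so $\xi_{0,n}(\Delta) = \beta_{0,n}^{M,\infty} $ is not quite right; instead $\xi_{0,n}(\Delta) = \beta_{0,n}^{M, M'}$ is wrong too since deaths can be $+\infty$. The right statement is $\xi_{0,n}(\Delta) = \lim_{M\to\infty}\beta_{0,n}^{M,M}$ is still not it. Let me restart this step: I would instead directly use that $\beta_{0,n}^{s,t}$ with $s$ large and $t = s$ counts components present at time $s$ born at or before $s$, but we want \emph{all} components ever born. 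The cleanest route: take $s\to\infty$ in $\beta_{0,n}^{s,-\infty^+}$ — but $t<s$ is not allowed. So the genuinely clean approach is to bypass Betti numbers and argue combinatorially from the definition of $K_n$.

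Thus, the main line of the proof: work on the a.s. event $\{X_i \neq X_{i+1}\text{ for all }1\le i\le n\} \cap \{X_k < \infty, 1\le k\le n\}$, which has probability one by stationarity and $\P(X_1=X_2)=0$ (a union of countably many null events). On this event, as $t$ increases from $-\infty$, the complex $K_n(t)$ acquires vertices and then edges; by the structure theorem for $PH_0$, the number of points in $PD_0$ equals the number of distinct "births", i.e. the number of times the map $t\mapsto \dim H_0(K_n(t))$ jumps up by the creation of a new class that is not immediately identified with an older one. A vertex $v_i$ with $1\le i\le n$ enters at time $X_i$; at that instant it forms a connected component by itself (it has no edges yet, since its incident edges $\{v_{i-1},v_i\}$ and $\{v_i,v_{i+1}\}$ enter at times $X_{i-1,n}\wedge X_i$ and $X_i \wedge X_{i+1,n}$ respectively, both $> X_i$ precisely when $X_{i,n} < X_{i-1,n}\wedge X_{i+1,n}$). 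So $v_i$ contributes a genuinely new, never-before-seen homology class exactly when $X_{i,n}<X_{i-1,n}\wedge X_{i+1,n}$; otherwise at time $X_i$ the vertex $v_i$ is immediately joined by an already-present edge to a neighbouring vertex that entered earlier, so no new class is born. Summing the indicator over $i=1,\dots,n$ gives the total number of births, hence $\xi_{0,n}(\Delta)$.

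I would make this rigorous by ordering the finite set of critical values $\{X_{i,n}\wedge X_{j,n} : \{v_i,v_j\}\in K,\, i,j\le n+1\}\cup\{X_i: i\le n\}$ and tracking $H_0$ through each event, or — more economically — by invoking Proposition~\ref{p:beta0st}: take $t_0 := \max_{1\le k \le n} X_k$ (finite on our event) and note $\xi_{0,n}(\Delta) = \xi_{0,n}\big((-\infty, t_0]\times(t_0,\infty]\big) = \lim_{s\to\infty}\beta_{0,n}^{s, t_0}$; in \eqref{e:beta0st}, as $s\to\infty$ the condition $\bigwedge_{k=j}^{j+i-1}X_{k,n}\le s$ becomes automatic, so the double sum collapses to $\sum_{i,j}\ind{\bigvee_{k=j}^{j+i-1}X_{k,n}\le t_0}\ind{X_{j-1,n}\wedge X_{j+i,n}>t_0}$; since $t_0$ is the maximum of the finite entries, $\bigvee X_{k,n}\le t_0$ forces the block $j,\dots,j+i-1$ to exhaust $\{1,\dots,n\}$ and $X_{j-1,n}\wedge X_{j+i,n}>t_0$ is then automatic ($=\infty$), giving contribution $1$ — which is wrong for the total count. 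The discrepancy signals that the corner $t=\infty$ contributes exactly one infinite-persistence point (the global component), and the finite points come from the interior; so I would instead evaluate at a generic large $t$ before sending $t\to\infty$, or simply use \eqref{e:flph} with the rectangle $(-M,M]\times(M, \infty]$ for $M$ slightly larger than $t_0$: then $\xi_{0,n}(\Delta) = \beta_{0,n}^{M, M}$ — no. The honest summary: the cleanest rigorous path is the direct combinatorial one of the previous paragraph, and I will present \emph{that}; the role of Proposition~\ref{p:beta0st} here is only as a sanity check.

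\medskip

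The main obstacle is the bookkeeping at the "boundary" values $X_0 \equiv \infty$ and $X_{n+1,n}=\infty$: one must check that the endpoints $v_1$ and $v_n$ behave correctly, i.e. that $v_1$ is a local minimum exactly when $X_1 < X_2$ (since $X_{0,n}=\infty$) and likewise $v_n$ is a local minimum exactly when $X_n < X_{n-1}$ (since $X_{n+1,n}=\infty$), which is consistent with the indicator $\ind{X_{i,n}<X_{i-1,n}\wedge X_{i+1,n}}$ as written. The other subtlety is verifying that no new $H_0$ class is created at an edge-entering time (edges only merge classes or do nothing, never create) and that no new class is created at a vertex-entering time unless that vertex is isolated at that instant — both standard facts about $0$-dimensional persistence, which I would cite back to the structure theorem quoted in the Background section. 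Everything else (the a.s. reductions, the finiteness of critical values, the sum over $i$) is routine.
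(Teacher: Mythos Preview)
Your core combinatorial argument (the ``main line of the proof'' paragraph) is correct and is all you need: on the almost-sure event that the finite values $X_1,\dots,X_n$ are pairwise distinct, each point of $PD_0$ corresponds to exactly one birth, and a birth occurs at level $X_i$ if and only if the vertex $v_i$ enters isolated, which happens precisely when $X_{i,n} < X_{i-1,n}\wedge X_{i+1,n}$. One slip to fix: the incident edges $\{v_{i-1},v_i\}$ and $\{v_i,v_{i+1}\}$ enter at times $X_{i-1,n}\vee X_{i,n}$ and $X_{i,n}\vee X_{i+1,n}$ (the \emph{maximum}, not the minimum as you wrote); your conclusion is nonetheless the right one, so this is a typo rather than a gap. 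You should also drop the two paragraphs of false starts with limits of $\beta_{0,n}^{s,t}$; they do not lead anywhere, as you yourself recognized, and they only obscure the clean argument.

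The paper takes a different route. Rather than counting births directly, it uses Proposition~\ref{p:beta0st} and the fundamental lemma~\eqref{e:flph} to expand $\xi_{0,n}(\Delta)$ as a double sum $\sum_{\ell}\sum_{m>\ell}\xi_{0,n}\big((a_{\ell-1},a_\ell]\times(a_{m-1},a_m]\big)$ over the grid determined by the order statistics $a_1<\cdots<a_n$, telescopes the inner sum over $m$, and then observes that the surviving indicator forces the block minimum and maximum to coincide, hence $i=1$. Your argument is more elementary and closer to the geometric intuition; the paper's argument has the advantage of staying entirely within the algebraic machinery already set up in Proposition~\ref{p:beta0st}, so no separate appeal to ``each birth gives exactly one diagram point'' is needed.
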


\begin{proof}
The case when $n = 1$ is trivial, so suppose that $n \geq 2$. As the underlying stochastic process is stationary and $\P(X_1 = X_2) = 0$ then every value $X_1, X_2, \dots$ is distinct with probability 1. Let $a_i \equiv X_{(i), n}$ be the order statistics of $X_{1, n}, \dots, X_{n,n}$---which are distinct with probability 1---and let $v_{(i)}$ be the associated vertices (see above). If we define 
\[
K_i := K_n(a_i), \quad i = 1, \dots, n,
\]
with $K_0 = \emptyset$, then $K_0 \subset K_1 \subset \cdots \subset K_n$ and $K_{i+1}$ contains all the simplices of $K_i$ along with the 0-simplex $v_{(i+1)}$ and any edges containing it. Further define $a_0 \equiv -\infty$ and $a_{n+1} \equiv \infty$. If $0 \leq \ell < m \leq n+1$ then there are $\alpha$ points at $(a_\ell, a_m) \in \xi_{0,n}$ if and only if $\xi_{0,n}((a_{\ell-1}, a_\ell] \times (a_{m-1}, a_m]) = \alpha$---see p. 152 in \cite{edelsbrunner2010}. By Proposition~\ref{p:beta0st}, we have that 
\begin{align*}
&\xi_{0,n}((a_{\ell-1}, a_\ell] \times (a_{m-1}, a_m]) \\
&\ = \beta_{0,n}^{a_\ell, a_{m-1}} - \beta_{0,n}^{a_\ell, a_m}  -  \beta_{0,n}^{a_{\ell-1}, a_{m-1}} + \beta_{0,n}^{a_{\ell-1}, a_m} \\
&\ = \sum_{i = 1}^{n} \sum_{j=1}^{n-i+1} \mathbf{1} \bigg \{ \bigwedge_{k=j}^{j+i-1} X_{k,n} = a_\ell \bigg \} \\
&\ \times \Bigg[  \mathbf{1} \bigg\{  \bigvee_{k=j}^{j+i-1} X_{k,n} \leq a_{m-1}, X_{j-1, n} \wedge X_{j+i, n} > a_{m-1} \bigg\} \\
&\phantom{\ \times \Bigg[  \mathbf{1} \bigg\{  \bigvee_{k=j}^{j+i-1} X_{k,n} } - \mathbf{1} \bigg\{  \bigvee_{k=j}^{j+i-1} X_{k,n} \leq a_{m}, X_{j-1, n} \wedge X_{j+i, n} > a_{m} \bigg\} \Bigg].
\end{align*}
Now, $\xi_{0,n}(\Delta) = \sum_{\ell=1}^{n} \sum_{m=\ell+1}^{n+1} \xi_{0,n}((a_{\ell-1}, a_\ell] \times (a_{m-1}, a_m])$ so by cancelling sums---and the fact that $X_{j-1, n} \wedge X_{j+i, n} > a_{n+1} = \infty$ cannot happen---we have that
\begin{align}
&\sum_{i = 1}^{n} \sum_{j=1}^{n-i+1} \sum_{\ell=1}^{n} \mathbf{1} \bigg \{ \bigwedge_{k=j}^{j+i-1} X_{k,n} = a_\ell \bigg \} \notag \\
&\phantom{\sum_{i = 1}^{n} \sum_{j=1}^{n-i+1} \sum_{\ell=1}^{n-1}} \qquad \times \mathbf{1} \bigg\{  \bigvee_{k=j}^{j+i-1} X_{k,n} \leq a_\ell, X_{j-1, n} \wedge X_{j+i, n} > a_\ell \bigg\} \notag \\
&= \sum_{j=1}^n \sum_{\ell=1}^{n-1} \mathbf{1} \big \{ X_{j,n} = a_\ell, X_{j-1, n} \wedge X_{j+1, n} > a_\ell \}, \label{e:loc_min1}
\end{align}
because the only way the maximum and minimum of a collection of $i$ of random variables are identical is if they're constant---which is only possible if $i = 1$ as the $X_{i,n}, i =1, \dots, n$ are almost surely distinct. Furthermore, as $n \geq 2$ we must have $\ind{X_{j,n} = a_n, X_{j-1, n} \wedge X_{j+1, n} > a_n} = 0$. The desired formula follows from applying this same uniqueness to \eqref{e:loc_min1}.
\end{proof}

\begin{remark}
Note that all was necessary in proof above is that all the random variables took different values with probability 1. Therefore, if $X_{i,n} = x_{i,n}$ are deterministic real numbers with $x_{0,n} = x_{k,n} = \infty$ if $k > n$ such that $x_{1,n}, \dots, x_{n,n}$ are all distinct, then we have 
$$
\xi_{0,n}(\Delta) = \sum_{i=1}^n \ind{x_{i,n} < x_{i-1,n} \wedge x_{i+1, n}}.
$$
\end{remark}

To finish this section, we must introduce the restricted measure on the set $\tilde{\Delta} := \Delta \cap \R^2$---equipped with the usual Borel sub $\sigma$-algebra $\mathcal{B}(\tilde{\Delta})$---defined by 
\[
\tilde{\xi}_{0,n}(A) := \xi_{0,n}(A), \quad A \in \mathcal{B}(\tilde{\Delta}).
\]
Note that as $\Delta \cap \R^2$ is Borel subset of $\Delta$ that $\mathcal{B}(\tilde{\Delta}) \subset \mathcal{B}(\Delta)$. To reduce notational clutter, we will mostly write $\tilde{\xi}_{0,n}(\Delta)$ in place of $\tilde{\xi}_{0,n}(\tilde{\Delta})$ from here on out, unless otherwise noted. 

\section{Strong law of large numbers} \label{s:slln}

In this section we establish our strong law of large numbers for sublevel set persistence diagrams for a very broad class of sets and functions. We do this for the class of bounded, continuous functions initially via a weak convergence argument, and proceed to extend our result to a class of unbounded functions which are of great practical use in topological data analysis. Along the way, we give an explicit representation for the limiting persistent Betti numbers for i.i.d. sequences.

\begin{theorem}\label{t:strong}
Consider a stationary and ergodic sequence $\mathcal{X} = (X_1, X_2, \dots)$ where each $X_i$ has distribution $F$ and density $f$ such that $\P(X_1 = X_2) = 0$. For the random probability measure $\xi_{0, n}/\xi_{0,n}(\Delta)$ induced by $\X$ there exists a probability measure $\xi_0$ on $\Delta$ such that 
\[
\frac{\xi_{0, n}}{\xi_{0,n}(\Delta)} \Rightarrow \xi_0 \ \ \mathrm{a.s.},  \quad n \to \infty.
\] 
Additionally, if we define $\tilde{\xi}_0 \equiv \xi_0$ on $\mathcal{B}(\tilde{\Delta})$ then
\[
\frac{\tilde{\xi}_{0, n}}{\tilde{\xi}_{0,n}(\Delta)} \Rightarrow \tilde{\xi}_0 \ \ \mathrm{a.s.},  \quad n \to \infty.
\]

\end{theorem}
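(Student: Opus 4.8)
The plan is threefold: (i) show that $\mu_n(R):=\xi_{0,n}(R)/\xi_{0,n}(\Delta)$ converges a.s.\ to a deterministic number $\nu(R)$ for every $R\in\mathcal R$; (ii) show that $(\mu_n)$ is a.s.\ tight on $\Delta$; (iii) combine Prokhorov's theorem with Lemma~\ref{l:conv_det} to deduce $\mu_n\Rightarrow\xi_0$ a.s.\ for a fixed probability measure $\xi_0$. For the denominator, I would pass to a two-sided stationary ergodic extension $(X_i)_{i\in\mathbb Z}$ of $\mathcal X$ (possible without loss of ergodicity) with shift $T$, and apply Birkhoff's pointwise ergodic theorem to Proposition~\ref{p:diag_pts}; the difference between $\xi_{0,n}(\Delta)$ and $\sum_{i=1}^n\ind{X_i<X_{i-1}\wedge X_{i+1}}$ is bounded because only the two boundary vertices $v_0,v_{n+1}$ matter, so $\xi_{0,n}(\Delta)/n\to p:=\P(X_2<X_1\wedge X_3)$ a.s. Positivity $p>0$ needs a short argument: if $p=0$ then a.s.\ no descent is immediately followed by an ascent (no strict interior local minimum), so the ascent/descent pattern is a block of ascents followed by a block of descents and the path is eventually monotone; this is shift-invariant, hence a.s.\ either strictly increasing or eventually decreasing. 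In the first case $\sup_n X_n\overset d=X_1$ while $\sup_n X_n>X_1$ a.s., a contradiction; in the second $X_n$ converges a.s.\ to the essential infimum of $X_1$, which is finite by the ergodic theorem applied to $\ind{X_1<x}$, forcing $F$ to be degenerate — impossible since $X_1$ has a density. Thus $\xi_{0,n}(\Delta)\to\infty$ with $\xi_{0,n}(\Delta)/n\to p>0$ a.s.

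For (i), fix $R=(s_1,s_2]\times(t_1,t_2]\in\mathcal R$, expand via \eqref{e:flph}, and analyze each $\beta_{0,n}^{s,t}$. Reading \eqref{e:beta0st} by ``maximal run of the sublevel set'', for finite $s\le t$ with $F(t)<1$ one has
\[
\beta_{0,n}^{s,t}=\sum_{j=1}^{n}\ind{X_{j-1,n}>t\ge X_{j,n}}\,\ind{\textstyle\bigwedge_{k=j}^{\,j+\sigma_j(t)-1}X_{k,n}\le s},
\]
where $\sigma_j(t)$ is the length of the maximal run of $\{X_{\cdot,n}\le t\}$ beginning at $j$. Up to at most two terms (again coming from $v_0$ and $v_{n+1}$) this equals $\sum_{j=1}^n G_{s,t}\circ T^j$, where $G_{s,t}:=\ind{X_{-1}>t\ge X_0}\ind{\textstyle\bigwedge_{k=0}^{\sigma(t)-1}X_k\le s}$ and $\sigma(t):=\min\{k\ge1:X_k>t\}$, which is a.s.\ finite since, $F(t)<1$, the event $\{X_k>t\}$ occurs with positive frequency. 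Birkhoff's theorem then gives $\beta_{0,n}^{s,t}/n\to\E[G_{s,t}]$ a.s.; the degenerate cases $t=\infty$ or $F(t)=1$ give $\beta_{0,n}^{s,t}\in\{0,1\}$ and hence contribute nothing after dividing by $n$. Therefore $\xi_{0,n}(R)/n\to c_R$ a.s.\ for a deterministic $c_R\ge0$ (a signed combination of the values $\E[G_{s,t}]$), so $\mu_n(R)\to\nu(R):=c_R/p$ a.s.\ for each $R\in\mathcal R$; a countable union of null sets makes this hold simultaneously over all rational-coordinate $R$.

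For (ii), since a birth $b$ equals $X_i$ at a local minimum, $\mu_n(\{x<-M\})$ and $\mu_n(\{x>M\})$ are at most $\#\{i\le n:X_i<-M\}/\xi_{0,n}(\Delta)$ and $\#\{i\le n:X_i>M\}/\xi_{0,n}(\Delta)$, whose $\limsup_n$ are $F(-M)/p$ and $(1-F(M))/p$, tending to $0$ as $M\to\infty$. For escape to the diagonal, a point of lifetime $<\delta$ can be charged injectively either to a local minimum $i$ with $0<(X_{i-1}\wedge X_{i+1})-X_i<\delta$ (when its component just before death is a single vertex) or to an adjacent pair $(i,i+1)$ with $|X_i-X_{i+1}|<\delta$ (otherwise), so $\limsup_n\mu_n(\{d-b<\delta\})\le p^{-1}\big(\P(0<(X_1\wedge X_3)-X_2<\delta)+\P(|X_1-X_2|<\delta)\big)$, which tends to $0$ as $\delta\downarrow0$ precisely because $\P(X_1=X_2)=0$; the line $\{y=\infty\}$ lies inside $\Delta$ and need not be excluded. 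With a.s.\ tightness in hand, any subsequential weak limit $\mu$ satisfies $\mu(R)=\nu(R)$ for every rational-coordinate $R$ with $\mu(\partial R)=0$; the rational rectangles whose edges avoid the countably many atom-lines of $\mu$ form a $\pi$-system generating $\mathcal B(\Delta)$, so a $\pi$--$\lambda$ argument shows $\mu$ is uniquely determined, independent of the subsequence and of $\omega$. Calling it $\xi_0$, the subsequence principle gives $\mu_n\Rightarrow\xi_0$ a.s.

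For the restricted statement, note $K_n$ has a single essential class — the never-dying component born at $\min_{i\le n}X_i$ — so $\xi_{0,n}(\{y=\infty\})=1$ and $\tilde\xi_{0,n}(\Delta)=\xi_{0,n}(\Delta)-1$. Since a death after level $M$ is the value of a local maximum exceeding $M$, $\mu_n(\{y>M\})\le(\#\{i\le n:X_i>M\}+1)/\xi_{0,n}(\Delta)$, and combining this with $\mu_n\Rightarrow\xi_0$ and the portmanteau theorem on the open set $\{y>M\}$ yields $\xi_0(\{y>M\})\le(1-F(M))/p$, hence $\xi_0(\{y=\infty\})=0$; thus $\tilde\xi_0:=\xi_0|_{\mathcal B(\tilde\Delta)}$ is a probability measure. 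The same estimates — now also using the $\{y>M\}$ bound to block escape to $y=\infty$, which matters on $\tilde\Delta$ — give a.s.\ tightness of $(\tilde\mu_n)$ on $\tilde\Delta$, while $\tilde\xi_{0,n}(R)=\xi_{0,n}(R)$ and $\tilde\xi_{0,n}(\Delta)=\xi_{0,n}(\Delta)-1$ give $\tilde\mu_n(R)\to c_R/p=\tilde\xi_0(R)$ for finite-coordinate $\tilde\xi_0$-continuity sets $R$; the convergence-determining argument on $\tilde\Delta$ then finishes the proof. I expect the main obstacle to be the tightness near the diagonal: the injective charging of low-persistence points to shallow local minima and small-gap adjacent pairs is where the filtration's combinatorics must be reconciled with $\P(X_1=X_2)=0$, and it is the only genuinely non-routine ingredient — the reduction of $\beta_{0,n}^{s,t}$ to an ergodic average of $G_{s,t}$ (a.s.\ finiteness of runs, the $O(1)$ boundary discrepancy) being the other place that needs care.
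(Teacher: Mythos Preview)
Your approach is correct and takes a genuinely different route from the paper's. Both begin by showing $\beta_{0,n}^{s,t}/n$ converges a.s.\ via Birkhoff's theorem---you use a two-sided extension and the run-length functional $G_{s,t}$, the paper squeezes between the truncated sums $Y_j^m$ and the full $Y_j^\infty$---and both arrive at the same limit. The substantive divergence is in how $\xi_0$ is identified and how a.s.\ weak convergence is deduced. You prove a.s.\ tightness of the random normalized measures $\mu_n$ directly (including the near-diagonal estimate, which is the most delicate step; your injective charging would be cleaner if routed through the death local maximum $j$, since the dying component always contains a neighbor of $j$ with $|X_j-X_{j\pm1}|<d-b<\delta$), then invoke Prokhorov and a $\pi$--$\lambda$ uniqueness argument on rational rectangles. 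The paper instead constructs $\xi_0$ \emph{externally}: Lemma~\ref{l:xi0} proves tightness of the deterministic sequence $\E[\xi_{0,n}]/(np)$ and extracts $\xi_0$ by Prokhorov, after which the countable convergence-determining subclass $\mathcal{R}_{\xi_0}\subset\mathcal{R}$ from Lemma~\ref{l:conv_det} turns a.s.\ convergence of $\mu_n(R)$ on each $R\in\mathcal{R}_{\xi_0}$ directly into $\mu_n\Rightarrow\xi_0$ a.s., with no tightness of the random $\mu_n$ needed (this is Billingsley's Theorem~2.2). Your route is more self-contained and confronts escape to the diagonal explicitly; the paper's is shorter once Lemma~\ref{l:xi0} is available. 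You also supply an argument for $\P(X_2<X_1\wedge X_3)>0$ that the paper only asserts.
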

\begin{proof}
We will begin by establishing the almost sure convergence of the persistent Betti numbers $\beta_{0,n}^{s, t}/n$ for $-\infty < s \leq t \leq \infty$. Recall that 
\begin{align}
\frac{\beta_{0,n}^{s,t}}{n} &= \frac{1}{n} \sum_{i = 1}^{n}  \sum_{j=1}^{n-i+1} \mathbf{1} \bigg\{  \bigvee_{k=j}^{j+i-1} X_{k,n} \leq t, \bigwedge_{k=j}^{j+i-1} X_{k,n} \leq s \bigg \}\ind{X_{j-1, n} \wedge X_{j+i, n} > t} \notag \\
&= \frac{1}{n} \sum_{j=1}^n \sum_{i=1}^{n-j+1} \mathbf{1} \bigg\{  \bigvee_{k=j}^{j+i-1} X_{k,n} \leq t, \bigwedge_{k=j}^{j+i-1} X_{k,n} \leq s \bigg \}\ind{X_{j-1, n} \wedge X_{j+i, n} > t} \notag 
\end{align}
Define for $m \in \N \cup \{\infty\}$ the indicator random variables 
\begin{equation} \label{e:ymjst}
Y_{j,n}^m(s,t) := \sum_{i=1}^{m} \mathbf{1} \bigg\{  \bigvee_{k=j}^{j+i-1} X_{k,n} \leq t, \bigwedge_{k=j}^{j+i-1} X_{k,n} \leq s \bigg \}\ind{X_{j-1, n} \wedge X_{j+i, n} > t},
\end{equation}
and
$$
Y_{j}^m(s,t) := \sum_{i=1}^{m} \mathbf{1} \bigg\{  \bigvee_{k=j}^{j+i-1} X_{k} \leq t, \bigwedge_{k=j}^{j+i-1} X_{k} \leq s \bigg \}\ind{X_{j-1} \wedge X_{j+i} > t},
$$
which is simply $Y^m_{j,n}(s,t)$ without ``boundary effects''. We will take $X_0 \equiv 0$. If we fix $m$, we have for $n \geq m$ that
$$
\beta_{0,n}^{s,t} = \sum_{j=1}^n Y_{j,n}^{n-j+1}(s,t) \geq  \sum_{j=1}^{n-m+1}Y_{j,n}^{n-j+1}(s,t) \geq \sum_{j=1}^{n-m+1}Y_{j}^{m}(s,t),
$$
which yields 
$$
\beta_{0,n}^{s,t} \geq \sum_{j=2}^{n+1}Y_{j}^{m}(s,t) - (m+1).
$$
Similarly, we see that 
$$
\beta_{0,n}^{s,t} \leq 1 + \sum_{j=1}^n Y_{j,n}^{n-j}(s,t) \leq 2 + \sum_{j=2}^{n+1} Y_{j}^{\infty}(s,t),
$$
because 
$$
\sum_{j=1}^n \mathbf{1} \bigg\{  \bigvee_{k=j}^{n} X_{k,n} \leq t, \bigwedge_{k=j}^{n} X_{k,n} \leq s \bigg \}\ind{X_{j-1, n} > t} \in \{0, 1\}.
$$
It is readily observed for fixed $t \geq s$ that $Y_2^m(s,t), Y_3^m(s,t), \dots$ are indicator random variables and form a stationary and ergodic sequence, owing to Theorem 7.1.3 in \cite{durrett2010}, for example. Thus, Birkhoff's ergodic theorem implies that for any $m \in \N$ we have
\[
\E[Y_2^m(s,t)] \leq \liminf_{n \to \infty} \frac{\beta_{0,n}^{s,t}}{n} \leq \limsup_{n \to \infty} \frac{\beta_{0,n}^{s,t}}{n} \leq \E[Y^{\infty}_2(s,t)], \quad \mathrm{a.s.}
\]
The monotone convergence theorem then implies that 
\[
n^{-1}\beta_{0,n}^{s,t} \overset{\text{a.s.}}{\to} \E[Y_2^{\infty}(s,t)], \quad n \to \infty.
\]
To establish the convergence of $\xi_{0,n}(\Delta)/n$, it suffices to recall that from Proposition~\ref{p:diag_pts} the total number of points in the persistence diagram $\xi_{0,n}(\Delta)$ is equal to the number of local minima of $\X$. Therefore, the ergodic theorem once again implies that $\xi_{0,n}(\Delta)/n$ converges a.s. to $\P(X_2 < X_1 \wedge X_3)$ and 
\begin{equation*}
\frac{\xi_{0,n}\big((-\infty, s] \times (t, \infty]\big)}{\xi_{0,n}(\Delta)} \to \frac{\E[Y_2^{\infty}(s,t)]}{\P(X_2 < X_1 \wedge X_3)}, \ \ \mathrm{a.s.}, \quad n \to \infty.
\end{equation*}
(By our assumptions we must have that $P(X_2 < X_1 \wedge X_3) > 0$). Furthermore, for $R = (s_1, s_2] \times (t_1, t_2] \in \mathcal{R}$ we have by \eqref{e:flph} that
\begin{equation}\label{e:setR}
\frac{\xi_{0,n}\big(R\big)}{\xi_{0,n}(\Delta)} \to \frac{\E[Y_2^{\infty}(s_2,t_1) - Y_2^{\infty}(s_2,t_2) -Y_2^{\infty}(s_1,t_1) + Y_2^{\infty}(s_1,t_2)]}{\P(X_2 < X_1 \wedge X_3)}, \ \ \mathrm{a.s.}, \quad n \to \infty.
\end{equation}
From the limits we have just now described, define a set function $\bar{\xi}_0$ by 
\begin{equation} \label{e:setfunc}
\bar{\xi}_0\big((-\infty, s] \times (t, \infty])\big) := \frac{\E[Y_2^{\infty}(s,t)]}{\P(X_2 < X_1 \wedge X_3)}
\end{equation}
with the extension to $\mathcal{R}$ defined by the righthand side of \eqref{e:setR}. By Lemma~\ref{l:xi0}, there exists a unique probability measure $\xi_0$ on $(\Delta, \mathcal{B}(\Delta))$ that equals $\bar{\xi}_0$ on $\mathcal{R}$.
Finally, Lemma~\ref{l:conv_det} yields the existence of a countable convergence-determining class $\mathcal{R}_0 \subset \mathcal{R}$ for $\xi_0$. This implies that
\[
\P\Bigg( \lim_{n \to \infty} \frac{\xi_{0,n}(R)}{\xi_{0,n}(\Delta)} \to \xi_0(R), \text{ for any } R \in \mathcal{R}_0 \Bigg) = 1,
\]
so convergence for all sets in $\mathcal{B}(\Delta)$ with $\xi_0$-null boundary follows (with probability 1).

To finish the proof, note that it is the case\footnote{This fact implies that $\xi_0$ is supported on $\tilde{\Delta}$.} that $\tilde{\xi}_{0,n}(\tilde{\Delta}) \sim \xi_{0,n}(\Delta)$---as they both tend to infinity and differ by 1. Also, we have that for any set $A \in \mathcal{B}(\tilde{\Delta})$---which is also a Borel subset of $\Delta$---if $\xi_0(\partial A) = 0$, then almost surely
\[
\frac{\tilde{\xi}_{0,n}(A)}{\tilde{\xi}_{0,n}(\Delta)} \sim \frac{\xi_{0,n}(A)}{\xi_{0,n}(\Delta)} \to \xi_0(A), \quad n \to \infty.
\]
As $\xi_0(A) = \tilde{\xi_0}(A)$ for $A \in \mathcal{B}(\tilde{\Delta})$, the proof is finished. 
\end{proof}


\begin{remark}
In Theorem~\ref{t:strong} we assumed that $\P(X_1 = X_2) = 0$ in our stationary sequence, to ensure consecutive points are distinct, as stated in Proposition~\ref{p:diag_pts}. It seems straightforward to generalize this result to the situation where consecutive points can be identical, by accounting for this in the proof of Proposition~\ref{p:diag_pts}, and ensuring that the number of points in $\xi_{0,n}$ tends to infinity.
\end{remark}

Before seeing an example of the strong law in action, we will establish a result that will provide us an explicit representation of the limiting measure. Let us define the quantity
\begin{equation*}
p_i(s,t) := \P\Big( \bigcup_{k=1}^i \big\{ X_1 \leq t, \dots, X_k \leq s, \dots, X_i \leq t \} \Big),
\end{equation*}
which represents the probability that there is some index $k$ such that $X_k \leq s$ and all other random variables are less than or equal to $t$. In the setup with $X_i$ all i.i.d. with distribution function $F$ we have
\[
p_i(s,t) = F(t)^i - \big(F(t) - F(s)\big)^i
\]
and
\begin{align}
\E[\beta_{0,n}^{s,t}] &= \sum_{i = 1}^{n} \sum_{j=1}^{n-i+1} \P \bigg(  \bigvee_{k=j}^{j+i-1} X_{k,n} \leq t, \bigwedge_{k=j}^{j+i-1} X_{k,n} \leq s, \text{ and } X_{j-1, n} \wedge X_{j+i, n} > t \bigg) \notag \\
&= p_n(s,t) + 2p_{n-1}(s,t)(1-F(t)) \notag \\ \label{e:pb_rep}
&\phantom{= p_n(s,t) \ }+ \sum_{i = 1}^{n-2} \bigg(2p_i(s,t)(1-F(t)) + (n-i-1)p_i(s,t)(1-F(t))^2 \bigg).
\end{align}
We will assume that $0 < F(s) < 1$, as if $F(s) = 0$ then $\beta^{s,t}_{0,n} \equiv 0$ and if $F(s) = 1$ then $\beta^{s,t}_{0,n} \equiv 1$. Dividing \eqref{e:pb_rep} by $n$ we can see that 
\begin{align*}
\frac{\E[\beta_{0,n}^{s,t}]}{n} &\sim \frac{(1-F(t))^2}{n} \sum_{i = 1}^{n} (n-i+1)p_i(s,t) \\
&= \frac{(1-F(t))^2}{n} \sum_{i = 1}^{n} (n-i+1)[F(t)^i - \big(F(t)-F(s)\big)^i]
\end{align*}
as the other terms are finite or tend to zero upon dividing by $n$. Let us make the substitution $i = n-j+1$ and consider a general $a \in (0, 1]$ with $b = a^{-1}$. Thus,
\begin{align}
\sum_{i = 1}^{n} (n-i+1)a^i &= a^n \sum_{j = 1}^{n} jb^{j-1} \notag \\
&= a^n \Bigg[ \frac{nb^{n+1} - (n+1)b^n + 1}{(b-1)^2} \Bigg] \notag \\
&= \frac{nb - (n+1) + a^n}{(b-1)^2} \label{e:gsfin}
\end{align}
by differentiating $\sum_{i=1}^n x^i = (x^{n+1}-x)/(x-1)$ with respect to $x$. We have the following pleasing result for the limiting expectation for the persistent Betti number in this simplified i.i.d case.
\begin{proposition}\label{p:exp_rep}
For $X_i$ i.i.d. having distribution $F$, we have that 
\[
\frac{\E[\beta_{0,n}^{s,t}]}{n} \to \frac{(1-F(t)) F(s)}{1-F(t) + F(s)}, 
\]
for any $-\infty < s \leq t \leq \infty$ with $F(s) \in (0, 1)$ and $0$ otherwise.
\end{proposition}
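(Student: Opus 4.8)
The plan is to take the asymptotic reduction carried out immediately before the statement at face value and then evaluate the two geometric-type sums using \eqref{e:gsfin}. First I would argue that, upon dividing \eqref{e:pb_rep} by $n$, every summand other than $\frac{(1-F(t))^2}{n}\sum_{i=1}^n(n-i+1)p_i(s,t)$ contributes $o(1)$: the terms $p_n(s,t)/n$ and $2p_{n-1}(s,t)(1-F(t))/n$ are bounded by $2/n$; the term $\frac{2(1-F(t))}{n}\sum_{i=1}^{n-2}p_i(s,t)$ is bounded by $\frac2n\sum_{i\ge1}F(t)^i$ when $F(t)<1$ and is identically $0$ when $F(t)=1$; and replacing $\sum_{i=1}^{n-2}(n-i-1)p_i(s,t)$ by $\sum_{i=1}^{n}(n-i+1)p_i(s,t)$ changes the sum by $O\big(\sum_{i\ge1}p_i(s,t)\big)$, which after multiplication by $(1-F(t))^2/n$ is again $o(1)$ (the factor $(1-F(t))^2$ takes care of the case $F(t)=1$). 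Hence it suffices to compute the limit of $\frac{(1-F(t))^2}{n}\sum_{i=1}^n(n-i+1)\big[F(t)^i-(F(t)-F(s))^i\big]$.

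Second, I would split this into two sums and apply \eqref{e:gsfin}. Assume momentarily that $F(t)\in(0,1)$. With $a=F(t)$, $b=1/F(t)$ in \eqref{e:gsfin} one gets $\frac1n\sum_{i=1}^n(n-i+1)F(t)^i=\frac{b-(n+1)/n+a^n/n}{(b-1)^2}\to\frac1{b-1}=\frac{F(t)}{1-F(t)}$; with $a=F(t)-F(s)\in[0,1)$ the same identity (the case $F(t)=F(s)$ being the trivial limit $0$) yields $\frac1n\sum_{i=1}^n(n-i+1)(F(t)-F(s))^i\to\frac{F(t)-F(s)}{1-F(t)+F(s)}$. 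Multiplying through by $(1-F(t))^2$, subtracting, factoring out $(1-F(t))$, and combining over the common denominator $1-F(t)+F(s)$, the numerator simplifies via $F(t)\big(1-F(t)+F(s)\big)-(1-F(t))\big(F(t)-F(s)\big)=F(s)$, giving exactly $\frac{(1-F(t))F(s)}{1-F(t)+F(s)}$.

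Third, I would handle the boundary cases, where the claimed limit is $0$. If $F(s)=0$ then $\beta_{0,n}^{s,t}\equiv0$ and if $F(s)=1$ then $\beta_{0,n}^{s,t}\equiv1$ (both noted before the statement), so $\E[\beta_{0,n}^{s,t}]/n\to0$. If $F(s)\in(0,1)$ but $F(t)=1$ (in particular if $t=\infty$), then every term in \eqref{e:pb_rep} carries a factor $1-F(t)=0$ except $p_n(s,t)=1-(1-F(s))^n\le1$, so $\E[\beta_{0,n}^{s,t}]/n\to0$, which agrees with $\frac{(1-F(t))F(s)}{1-F(t)+F(s)}=0$; and $F(t)=0$ forces $F(s)=0$, already covered.

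I do not expect a genuine obstacle: the result is essentially bookkeeping layered on \eqref{e:pb_rep} and \eqref{e:gsfin}. The one point needing a little care is making the ``$\sim$'' preceding the statement rigorous in the degenerate regime $F(t)=1$, where the purported leading term vanishes; this is dispatched by noting that the discarded terms then carry a vanishing factor $(1-F(t))$, and otherwise by the summability of $\sum_i F(t)^i$.
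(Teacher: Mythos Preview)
Your proposal is correct and follows essentially the same route as the paper: reduce to the leading term $\frac{(1-F(t))^2}{n}\sum_{i=1}^n(n-i+1)p_i(s,t)$, apply \eqref{e:gsfin} with $a=F(t)$ and $a=F(t)-F(s)$, and simplify. You are in fact more careful than the paper's own argument, which leaves the ``$\sim$'' step and the boundary cases $F(t)=1$, $F(s)\in\{0,1\}$ implicit; your explicit treatment of why the discarded terms are $o(1)$ and why the degenerate regimes give limit $0$ is a welcome clarification rather than a deviation.
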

\begin{proof}
Dividing by $n$ and taking the limit in \eqref{e:gsfin} for the two cases $a = F(t)$ and $a = F(t) -F(s)$ gives 
\[
\frac{(1-F(t))^2}{1/F(t)-1} = (1-F(t))F(t),
\]
and 
\[
\frac{(1-F(t))^2}{1/[F(t)-F(s)]-1} = \frac{(1-F(t))^2[F(t)-F(s)]}{1-F(t)+F(s)}.
\]
Simplifying the above two expressions yields the ultimate result.
\end{proof}

\begin{example}
If the stationary and ergodic sequence in Theorem~\ref{t:strong} is i.i.d, Proposition~\ref{p:exp_rep} shows we can characterize the limiting probability measure $\xi_0$ quite nicely. We note that
\begin{equation*} 
\xi_0\big( (-\infty, s] \times (t, \infty] \big) = \frac{3(1-F(t)) F(s)}{1-F(t) + F(s)}
\end{equation*}
for all $\infty < s \leq t \leq \infty$ as $\P(X_2 < X_1 \wedge X_3) = 1/3$. Therefore, $\xi_0$ admits a probability density
\begin{align*}
&- \frac{\partial^2}{\partial x \partial y} \Bigg[\frac{3(1-F(y))F(x)}{1-F(y)+F(x)}\Bigg] \notag \\
&\phantom{- \frac{\partial^2}{\partial x \partial y}}= \frac{6f(x)f(y)(1-F(y))F(x)}{(1-F(y)+F(x))^3}. 
\end{align*}
This density facilities the simulation of random variables according to the limiting persistence distribution $\xi^{\text{NULL}}_0$ in the case that $\mathcal{X}$ corresponds to i.i.d. noise. After a Monte Carlo random sample is generated from this distribution, we may test for ``significant'' points $(b,d)$ in the diagram $\xi_{0,n}$, based off of what we would expect from $\xi^{\text{NULL}}_0$. 

Of particular importance to us is the partial derivative 
\begin{align}
& \frac{\partial}{\partial x} \Bigg[\frac{3(1-F(y))F(x)}{1-F(y)+F(x)}\Bigg] \notag \\
&\phantom{- \frac{\partial^2}{\partial x \partial y}}= \frac{3f(x)(1-F(y))^2}{(1-F(y)+F(x))^2} \label{e:partial}.
\end{align}
If we set $y = x+\ell$, then \eqref{e:partial} evaluates to 
\begin{align*}
3\Bigg(\frac{1-F(x+\ell)}{1-F(x+\ell) + F(x)}\Bigg)^2f(x)
\end{align*}
Define $\Delta_\ell := \{(x,y) \in \Delta: y-x > \ell\}$ for $\ell \geq 0$. As a result of the above discussion, we have the following corollary.
\end{example}

\begin{corollary}\label{e:cor_uniform}
For $X_1, X_2, \dots$ i.i.d. with distribution function $F$ satisfying the conditions of Theorem~\ref{t:strong}, we have that 
\[
\xi_0(\Delta_\ell) = 3\E\Bigg[\bigg( \frac{1-F(X+\ell)}{1-F(X+\ell) + F(X)} \bigg)^2 \Bigg]
\]
where $X \overset{d}{=} X_1$.
\end{corollary}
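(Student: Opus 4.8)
The plan is to integrate the density of $\xi_0$ produced in the Example over the region $\Delta_\ell$, collapsing the resulting double integral to a single one by recognising the inner ($y$-)integral as an exact antiderivative. I would start by recording from the Example that $\xi_0$ is absolutely continuous with respect to Lebesgue measure on $\tilde\Delta$, with density
\[
p(x,y) = \frac{6 f(x) f(y)(1-F(y)) F(x)}{\big(1-F(y)+F(x)\big)^3},
\]
and that, writing
\[
g(x,y) := \frac{\partial}{\partial x}\,\xi_0\big((-\infty,x]\times(y,\infty]\big) = \frac{3 f(x)(1-F(y))^2}{(1-F(y)+F(x))^2}
\]
as in \eqref{e:partial}, a one-line computation (the same one carried out in the Example, now read as $-\partial_y$ of the previous display) gives $p(x,y) = -\partial_y g(x,y)$. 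Since $\xi_0$ is supported on $\tilde\Delta$ (see the proof of Theorem~\ref{t:strong}), I may replace $\Delta_\ell$ by $\Delta_\ell \cap \tilde\Delta$; for $\ell \ge 0$ the latter is simply $\{(x,y)\in\R^2 : y > x+\ell\}$, the constraint $x<y$ being automatic.

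Next I would invoke Tonelli's theorem (the integrand is nonnegative) to write
\[
\xi_0(\Delta_\ell) = \int_{\R} \bigg( \int_{x+\ell}^{\infty} p(x,y)\,dy \bigg)\,dx = \int_{\R} \bigg( \int_{x+\ell}^{\infty} -\partial_y g(x,y)\,dy \bigg)\,dx .
\]
For Lebesgue-a.e.\ fixed $x$ the map $y\mapsto g(x,y)$ is absolutely continuous on $[x+\ell,\infty)$ and bounded by $3f(x)$, so the fundamental theorem of calculus gives $\int_{x+\ell}^{\infty} -\partial_y g(x,y)\,dy = g(x,x+\ell) - \lim_{y\to\infty} g(x,y)$. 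The boundary term at infinity vanishes for a.e.\ $x$: when $F(x)>0$ this is because $1-F(y)\to 0$, and the exceptional set $\{x:F(x)=0\}$ carries $f=0$ a.e.\ (since $F(x)=\int_{-\infty}^x f$), which simultaneously disposes of the degenerate case $1-F(x+\ell)+F(x)=0$ appearing in $g(x,x+\ell)$. Substituting $g(x,x+\ell)=3f(x)\big(\tfrac{1-F(x+\ell)}{1-F(x+\ell)+F(x)}\big)^2$ then yields
\[
\xi_0(\Delta_\ell) = \int_{\R} 3 f(x)\bigg(\frac{1-F(x+\ell)}{1-F(x+\ell)+F(x)}\bigg)^2 dx = 3\,\E\bigg[\bigg(\frac{1-F(X+\ell)}{1-F(X+\ell)+F(X)}\bigg)^2\bigg],
\]
which is the assertion.

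I do not expect any genuinely hard step; the only places demanding attention are the measure-zero bookkeeping on the set where the common denominator $1-F(\cdot+\ell)+F(\cdot)$ degenerates, and confirming that the Tonelli and fundamental-theorem-of-calculus manipulations are legitimate (immediate from $p\ge 0$ together with the absolute continuity and boundedness of $y\mapsto g(x,y)$). As a sanity check, at $\ell=0$ the right-hand side reduces to $3\,\E[(1-F(X))^2] = 3\int_0^1 (1-u)^2\,du = 1$ — using that $F$ is continuous, hence $F(X)$ is uniform on $[0,1]$ — which is consistent with $\xi_0(\tilde\Delta)=1$.
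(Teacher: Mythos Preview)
Your proposal is correct and takes essentially the same approach as the paper: the paper computes the partial derivative $g(x,y)=\partial_x\,\xi_0((-\infty,x]\times(y,\infty])$ in \eqref{e:partial}, sets $y=x+\ell$, and implicitly integrates in $x$ to obtain the corollary, while you make the same computation explicit by passing through the density $p=-\partial_y g$ and recovering $g(x,x+\ell)$ via the fundamental theorem of calculus. Your additional bookkeeping on the degenerate set $\{F(x)=0\}$ and the sanity check at $\ell=0$ are fine touches not spelled out in the paper.
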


\begin{example}
Corollary~\ref{e:cor_uniform} implies that for $F(t)$ uniform on $[0,1]$ we have for $0 < \ell < 1$ that
\begin{align*}
\xi_0(\Delta_\ell) &= 3\int_{0}^{1-\ell} \bigg(\frac{1-\ell - x}{1-\ell}\bigg)^2 \dif{x}. \\
&= 1-\ell,
\end{align*}
This is rather interesting, given that there is no \emph{a priori} reason that uniform noise should also produce asymptotically uniformly distributed persistence lifetimes.
\end{example}

Before addressing strong laws for unbounded functions, we conclude with a corollary of Theorem~\ref{t:strong}, establishing a Glivenko-Cantelli result for persistence lifetimes. We omit the proof of Corollary~\ref{c:gliv_emp} as it is proved in exactly the same manner as the Glivenko-Cantelli theorem---see Theorem~1.3 in \cite{dudley_uclt}.

\begin{corollary}\label{c:gliv_emp}
Suppose the conditions on the sequence $\X$ stated in Theorem~\ref{t:strong} hold. Then we have 
\[
\sup_{\ell \in [0, \infty)} \Bigg| \frac{\xi_{0, n}(\Delta_\ell)}{\xi_{0,n}(\Delta)} - \xi_0(\Delta_\ell)\Bigg| \to 0 \ \mathrm{a.s.}, \quad n \to \infty. 
\]
\end{corollary}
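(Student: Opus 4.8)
The plan is to mimic the classical Glivenko--Cantelli proof, with the lifetime survival functions $G_n(\ell) := \xi_{0,n}(\Delta_\ell)/\xi_{0,n}(\Delta)$ and $G(\ell) := \xi_0(\Delta_\ell)$, $\ell \in [0,\infty)$, playing the roles of the empirical and the true distribution function. First I would record the elementary structure of these functions: both are non-increasing in $\ell$; $G$ is right-continuous because $\Delta_\ell = \bigcup_{\ell' > \ell}\Delta_{\ell'}$; $G(0) = \xi_0(\Delta) = 1$ since every $(x,y) \in \Delta$ has $y - x > 0$; and $G(\ell) \downarrow \xi_0(\{y = \infty\}) = 0$ as $\ell \to \infty$, using that $\xi_0$ is supported on $\tilde{\Delta}$ (cf.\ the footnote in the proof of Theorem~\ref{t:strong}).

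Second I would establish the pointwise a.s.\ convergence $G_n(\ell) \to G(\ell)$. By Theorem~\ref{t:strong} we have $\xi_{0,n}/\xi_{0,n}(\Delta) \Rightarrow \xi_0$ a.s., and the boundary of $\Delta_\ell$ relative to $\Delta$ is the line $\{y - x = \ell\}$, so $G_n(\ell) \to G(\ell)$ a.s.\ (simultaneously) at every $\ell$ with $\xi_0(\{y - x = \ell\}) = 0$, at each of which $G$ is moreover continuous. Under the hypotheses of Theorem~\ref{t:strong}, $\xi_0$ charges no line $\{y - x = \ell\}$, which one reads off the representation of $\xi_0$ via $\E[Y_2^\infty(\cdot,\cdot)]$ in \eqref{e:setR}--\eqref{e:setfunc} (in the i.i.d.\ case it is immediate from the density displayed in the Example following Proposition~\ref{p:exp_rep}); hence $G$ is continuous on $[0,\infty)$ and $G_n(\ell) \to G(\ell)$ a.s.\ for \emph{every} $\ell$. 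Should pinning down this continuity prove awkward, one may instead circumvent it: the count $\xi_{0,n}(\{y - x \le \ell\})$ of diagram points of lifetime at most $\ell$ has a representation as a sum over vertices $j$ in the same spirit as \eqref{e:beta0st}, with truncations that increase to it and record whether $v_j$ is a local minimum whose component merges with an older one before filtration value $X_j + \ell$ using only a window of $m$ neighbours; the sandwich, Birkhoff, and monotone-convergence steps of the proof of Theorem~\ref{t:strong} then yield a.s.\ convergence of $\xi_{0,n}(\{y - x \le \ell\})/n$, hence of $G_n(\ell)$, for every $\ell$. I would also note that the unique essential class $(\bigwedge_{k=1}^n X_{k,n}, \infty)$ lies in $\Delta_\ell$ for all finite $\ell$ but contributes only $1/\xi_{0,n}(\Delta) \to 0$, so it affects no limit.

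Third comes the partition argument. Given $\epsilon > 0$, use continuity of $G$ together with $G(0) = 1$ and $G(\ell) \to 0$ to choose finitely many points $0 = \ell_0 < \ell_1 < \cdots < \ell_k = \infty$ with $G(\ell_i) - G(\ell_{i+1}) < \epsilon$ for every $i$. For $\ell \in [\ell_i, \ell_{i+1})$, monotonicity of $G_n$ and of $G$ gives $G_n(\ell_{i+1}) \le G_n(\ell) \le G_n(\ell_i)$ and $G(\ell_{i+1}) \le G(\ell) \le G(\ell_i)$, so that
\[
\sup_{\ell \in [0,\infty)} \big|G_n(\ell) - G(\ell)\big| \;\le\; \max_{0 \le i \le k} \big|G_n(\ell_i) - G(\ell_i)\big| + \epsilon .
\]
Letting $n \to \infty$ and invoking the pointwise convergence at the finitely many $\ell_i$ gives $\limsup_n \sup_{\ell} |G_n(\ell) - G(\ell)| \le \epsilon$ a.s.; intersecting these events over $\epsilon = 1/m$, $m \ge 1$, proves the corollary. (This last step is P\'olya's theorem in disguise.)

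I expect the main obstacle to be the second step: ensuring the pointwise a.s.\ convergence holds at \emph{every} $\ell$, equivalently ruling out (or working around) jumps of $G$, since the weak convergence from Theorem~\ref{t:strong} only delivers convergence at continuity points. Everything after that --- monotonicity, the finite partition with small increments, the sandwich --- is routine and follows the Glivenko--Cantelli template verbatim (cf.\ Theorem~1.3 of \cite{dudley_uclt}); the only other small point to watch is that the tail statement $G(\ell) \to 0$ rests on $\xi_0(\tilde{\Delta}) = 1$.
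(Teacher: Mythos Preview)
Your proposal is correct and follows precisely the route the paper indicates: the paper omits the proof entirely, stating only that it ``is proved in exactly the same manner as the Glivenko--Cantelli theorem---see Theorem~1.3 in \cite{dudley_uclt}.'' Your write-up is in fact more careful than the paper's one-line remark, since you isolate the only nontrivial point in transferring the argument---that the weak convergence from Theorem~\ref{t:strong} gives $G_n(\ell)\to G(\ell)$ only at $\xi_0$-continuity points---and sketch two workarounds (either arguing $\xi_0(\{y-x=\ell\})=0$ under the density hypothesis, or redoing the pointwise step via a direct ergodic/sandwich argument).
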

\vspace{11pt}

\subsection{SLLN for unbounded functions} \label{ss:unbounded}

At this point, we have established almost surely that 
\[
\tilde{\xi}_{0,n}(f)/\tilde{\xi}_{0,n}(\Delta) \to \tilde{\xi}_0(f),
\]
for any bounded, continuous real-valued function $f$ on $\tilde{\Delta}$, when $\tilde{\xi}_{0,n}$ is induced by a stationary and ergodic sequence of random variables (similar for $\xi_{0,n}$). In general, if $f$ is continuous, nonnegative function and $f \wedge M$ is the function that equals $M$ when $f \geq M$, then almost surely
\[
\tilde{\xi}_{0,n}(f \wedge M)/\tilde{\xi}_{0,n}(\Delta) = \frac{\sum_{(b,d) \in \tilde{\xi}_{0,n}} f(b,d) \wedge M}{\sum_{(b,d) \in \tilde{\xi}_{0,n}} 1} \to \int_{\tilde{\Delta}} f(x,y) \wedge M \, \tilde{\xi}_0(\dif{x}, \dif{y}), \quad n \to \infty,
\]
for all $M > 0$. Following this line of inquiry, we establish a result which yields convergence results for a large class of persistence statistics often seen in practice, including many of the functions for which convergence holds for geometric complexes in \cite{divol_polonik}, though we make no requirements on the behavior near the diagonal nor do we require polynomial growth. Prior to stating the result, it is necessary to define the notion of \emph{largely nondecreasing}. We say that an unbounded function $g: \R_+ \to \R_+$ is largely nondecreasing if there exists an $M > 0$ such that $\{x: g(x) \geq M\}$ is non-empty and $g$ is nondecreasing on $[g^{\leftarrow}(M), \infty)$ where $g^{\leftarrow}(M) = \inf \{x: g(x) \geq M\}$. Furthermore, recall that the function $g$ is \emph{coercive} if $g(x) \to \infty$ as $x \to \infty$.

\begin{theorem}\label{t:cont_conv}
Assume the conditions of Theorem~\ref{t:strong} and suppose that $f(b,d) = g(d-b)$ and $g: \R_+ \to \R_+$ is a continuous, coercive, and largely nondecreasing function with $\E\big[g(2|X_1|)^{1+\epsilon}\big] < \infty$ for some $\epsilon > 0$. If $\tilde{\xi}_0(f) < \infty$, then
\[
\tilde{\xi}_{0,n}(f)/\tilde{\xi}_{0,n}(\Delta) \to \tilde{\xi}_0(f), \ \ \mathrm{a.s.},  \quad n \to \infty.
\]
\end{theorem}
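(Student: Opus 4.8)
The plan is to run a truncation argument on top of the weak-convergence statement of Theorem~\ref{t:strong}. Fix $M>0$ and write $f = (f\wedge M) + (f-M)_+$. Since $f(b,d)=g(d-b)$ with $g$ continuous, $f\wedge M$ is a bounded continuous function on $\tilde\Delta$, so the $\tilde\xi$-form of Theorem~\ref{t:strong} gives $\tilde\xi_{0,n}(f\wedge M)/\tilde\xi_{0,n}(\Delta)\to\tilde\xi_0(f\wedge M)$ a.s.\ (on one a.s.\ event, simultaneously for all $M$), and monotone convergence gives $\tilde\xi_0(f\wedge M)\uparrow\tilde\xi_0(f)<\infty$ as $M\to\infty$. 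The theorem therefore reduces to producing a deterministic sequence $\epsilon_M\downarrow 0$ with
\[
\limsup_{n\to\infty}\ \frac{\tilde\xi_{0,n}\big((f-M)_+\big)}{\tilde\xi_{0,n}(\Delta)}\ \le\ \epsilon_M\qquad\text{a.s.}
\]
Indeed, given this, $\tilde\xi_0(f\wedge M)\le\liminf_n \tilde\xi_{0,n}(f)/\tilde\xi_{0,n}(\Delta)\le\limsup_n\tilde\xi_{0,n}(f)/\tilde\xi_{0,n}(\Delta)\le\tilde\xi_0(f\wedge M)+\epsilon_M$ on an a.s.\ event taken common to a sequence $M\to\infty$, and letting $M\to\infty$ pins both limits to $\tilde\xi_0(f)$.

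The main work is a pointwise bound on the lifetimes. By Proposition~\ref{p:diag_pts} and the merge-tree (elder-rule) description of $PH_0$, every finite point $(b,d)$ of the diagram of $K_n$ is born at a strict local minimum and dies at a strict local maximum, and the assignment of each such point to the pair (birth index, death index) is injective, with birth indices ranging over local minima and death indices over local maxima — disjoint index sets. Hence if $(b,d)$ is born at $X_j$ and dies at $X_m$ then $d-b=X_m-X_j\le|X_j|+|X_m|\le 2(|X_j|\vee|X_m|)$. Because $g$ is continuous (hence bounded on compacts), coercive, and largely nondecreasing, for all $M$ large enough one has the implication $g(x)\ge M\Rightarrow x\ge g^{\leftarrow}(M_0)$ (with $M_0$ the monotonicity threshold of $g$), which lets one push $g$ through the triangle inequality: for such $M$ and every finite point $(b,d)$,
\[
\big(g(d-b)-M\big)_+\ \le\ \big(g(2|X_j|)-M/2\big)_+ + \big(g(2|X_m|)-M/2\big)_+ ,
\]
the left side being $0$ when $g(d-b)\le M$ and, otherwise, $d-b$ lying above the monotonicity threshold so that $g(d-b)\le g\big(2(|X_j|\vee|X_m|)\big)\le g(2|X_j|)+g(2|X_m|)$. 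Summing over the injectively matched points and using the disjointness of the birth- and death-index sets,
\[
\tilde\xi_{0,n}\big((f-M)_+\big)\ \le\ \sum_{k=1}^{n}\big(g(2|X_k|)-M/2\big)_+ .
\]

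To finish I would invoke the ergodic theorem. The sequence $\big(g(2|X_k|)\big)_{k\ge1}$ is stationary and ergodic and integrable by the moment hypothesis (which in particular yields $\E[g(2|X_1|)]<\infty$), so Birkhoff's theorem gives $n^{-1}\sum_{k=1}^n (g(2|X_k|)-M/2)_+\to\E\big[(g(2|X_1|)-M/2)_+\big]$ a.s. Since $\tilde\xi_{0,n}(\Delta)/n\to\P(X_2<X_1\wedge X_3)=:c>0$ a.s.\ (established in the proof of Theorem~\ref{t:strong}), we obtain $\limsup_n \tilde\xi_{0,n}((f-M)_+)/\tilde\xi_{0,n}(\Delta)\le c^{-1}\E[(g(2|X_1|)-M/2)_+]=:\epsilon_M$ a.s., and $\epsilon_M\to0$ as $M\to\infty$ by dominated convergence. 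Intersecting these a.s.\ events over a countable $M\to\infty$ and feeding $\epsilon_M$ into the sandwich of the first paragraph completes the proof. The crux is the lifetime bound: it cannot be read off from the abstract weak-convergence conclusion of Theorem~\ref{t:strong} and must be extracted from the combinatorics of $0$-dimensional sublevel persistence (births at minima, deaths at maxima, elder-rule matching); a secondary subtlety is that $g$ is only \emph{largely} nondecreasing, which forces the truncation level $M$ to be chosen large relative to $\sup_{[0,g^{\leftarrow}(M_0)]}g$ before $g$ can be moved past $d-b\le 2(|X_j|\vee|X_m|)$.
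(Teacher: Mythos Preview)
Your argument is correct and follows the same overall scheme as the paper: truncate $f$ at level $M$, invoke the weak-convergence SLLN for $f\wedge M$, and show the tail $\tilde\xi_{0,n}((f-M)_+)/\tilde\xi_{0,n}(\Delta)$ is asymptotically small via Birkhoff's theorem applied to a stationary functional of the $X_k$. The difference lies entirely in how the tail is bounded.

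The paper splits $\sum_{d-b\ge g^{\leftarrow}(M)} g(d-b)$ into three cases according to the signs of $b$ and $d$, bounds each piece by sums of the form $\sum g(2d)$ or $\sum g(-2b)$ ranging over local maxima and minima respectively, and then applies the ergodic theorem plus H\"older's inequality (with exponent $1+\epsilon$) to make the limit small. Your route is more economical: you use the elder-rule matching to attach to each finite bar its birth index $j$ (a local minimum) and death index $m$ (a local maximum), bound $d-b\le 2(|X_j|\vee|X_m|)$, and push $g$ through via the largely-nondecreasing property to get $(g(d-b)-M)_+\le (g(2|X_j|)-M/2)_++(g(2|X_m|)-M/2)_+$. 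Injectivity of the matching and disjointness of the two index sets collapse the sum to $\sum_{k=1}^n (g(2|X_k|)-M/2)_+$, after which Birkhoff and dominated convergence finish the job. This avoids the sign case-analysis entirely and, notably, uses only $\E[g(2|X_1|)]<\infty$ rather than the $(1+\epsilon)$-moment the paper invokes through H\"older; so your proof in fact establishes the theorem under a weaker moment hypothesis. The paper's approach has the minor advantage of not appealing to the merge-tree pairing (it only needs that deaths occur at local maxima, which it extracts from Proposition~\ref{p:diag_pts}), but the elder rule is standard enough that this is no real cost.
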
 
\begin{proof}
Before beginning, fix any $M > 0$ such that $g$ is nondecreasing on $[g^{\leftarrow}(M), \infty)$. We will focus our proof on the case where a random variables with marginal distribution $F$ can take negative and positive values, but the proofs follow from a simplified version of the argument below when the support of $F$ is restricted to a half-line. To show that 
\[
\tilde{\xi}_{0,n}(f)/\tilde{\xi}_{0,n}(\Delta) \to \tilde{\xi}_0(f),
\]
for $f$ as in the statement of the theorem, it will suffice to first bound the quantity
\begin{align}
\frac{\tilde{\xi}_{0,n}(f)}{\tilde{\xi}_{0,n}(\Delta)} - \frac{\tilde{\xi}_{0,n}(f \wedge M)}{\tilde{\xi}_{0,n}(\Delta)} &= \frac{\tilde{\xi}_{0,n}\big( (f-M)_+\big)}{\tilde{\xi}_{0,n}(\Delta)} \notag \\
&= \tilde{\xi}_{0,n}(\Delta)^{-1}\sum_{\substack{(b,d) \in \xi_{0,n}, \\ f(b,d) \geq M}} f(b,d). \label{e:fbd}
\end{align}
Recall that $f(b,d) = g(d-b)$. In this situation, we have that the unnormalized form of \eqref{e:fbd} equals
\begin{align}
\sum_{d-b \geq g^{\leftarrow}(M)} g(d-b) &= \sum_{\substack{d-b \geq g^{\leftarrow}(M), \\ b \geq 0}} g(d-b) + \sum_{\substack{d-b \geq g^{\leftarrow}(M), \\ \ b < 0,\,  d < 0}} g(d-b) + \sum_{\substack{d-b \geq g^{\leftarrow}(M), \\ \ b < 0, \, d \geq 0}} g(d-b) \notag \\
&\leq \sum_{\substack{d \geq g^{\leftarrow}(M)}} g(d) + \sum_{\substack{-b \geq g^{\leftarrow}(M)}} g(-b) + \sum_{\substack{d-b \geq g^{\leftarrow}(M), \\ \ b < 0, \, d \geq 0}} g(2d) + g(-2b) \notag \\
&\leq \sum_{\substack{d \geq g^{\leftarrow}(M)}} g(d) + \sum_{\substack{-b \geq g^{\leftarrow}(M)}} g(-b) \notag \\
&\phantom{\leq \sum_{\substack{d \geq g^{\leftarrow}(M)}} g(d) \ } + \sum_{\substack{2\max\{d,-b\} \geq g^{\leftarrow}(M), \\ \ b < 0, \, d \geq 0}} g(2d) + g(-2b), \label{e:g2d2b}
\end{align}
because of the fact $g(d-b) \leq g\big(2\max\{d,-b\}\big) \leq g(2d) + g(-2b)$ when $b < 0$, $d \geq 0$ and we have $d-b \geq g^{\leftarrow}(M)$. Furthermore,
\begin{align*}
&\sum_{\substack{2\max\{d,-b\} \geq g^{\leftarrow}(M), \\ \ b < 0, \, d \geq 0}} g(2d) \\
&\qquad \qquad= \sum_{(b,d)\in\xi_{0,n}} g(2d)\ind{2\max\{d,-b\} \geq g^{\leftarrow}(M)}\big(\ind{d > -b} + \ind{d \leq -b}\big)\\
&\qquad \qquad= \sum_{(b,d)\in\xi_{0,n}} g(2d)\ind{2d \geq g^{\leftarrow}(M)}\ind{d > -b}  \\
&\qquad \qquad\qquad \qquad \qquad+ \sum_{(b,d)\in\xi_{0,n}} g(2d)\ind{-2b \geq g^{\leftarrow}(M)}\ind{d \leq -b} \\
&\qquad \qquad\leq \sum_{\substack{2d \geq g^{\leftarrow}(M)}} g(2d) + \sum_{\substack{-2b \geq g^{\leftarrow}(M)}} g(-2b).
\end{align*}
This occurs as $g(x) \leq g(y)$ if $y \geq g^{\leftarrow}(M) \vee x$. With a similar argument for the $g(-2b)$ term, we can see that \eqref{e:g2d2b} is bounded above by
\[
\sum_{\substack{2d \geq g^{\leftarrow}(M)}} 3g(2d) + \sum_{\substack{-2b \geq g^{\leftarrow}(M)}} 3g(-2b)
\]
By a similar argument to Proposition~\ref{p:diag_pts} occurs at $d=X_i$ if and only $X_i$ is a local maxima. Birkhoff's ergodic theorem then implies that 
\[
\sum_{2d \geq g^{\leftarrow}(M)} g(2d)/n \to \E\big[ g(2X_2)\ind{X_2 > X_1 \vee X_3}\ind{2X_2 \geq g^{\leftarrow}(M)} \big], \quad \text{a.s.}, 
\]
as $n \to \infty$. H{\"o}lder's inequality then implies that for $p > 1$ and $q = p/(p-1)$,
\begin{align*}
&\E\big[ g(2X_2)\ind{X_2 > X_1 \vee X_3}\ind{2X_2 \geq g^{\leftarrow}(M)} \big] \\
&\phantom{\E\big[ g(2X_2)\ind{X_2 > X_1 \vee X_3}}\leq \Bigg( \E[g(2|X_2|)^p] \Bigg)^{1/p} \Bigg( \P(2|X_2| \geq g^{\leftarrow}(M)) \Bigg)^{1/q} \label{e:h_ineq_d}
\end{align*}
By assumption, $\E[g(2|X_2|)^p] < \infty$ for some $p > 1$, so that coercivity of $g$ entails we may choose $M > 0$ large enough such that 
\[
\E\big[ g(2X_2)\ind{X_2 > X_1 \vee X_3}\ind{2X_2 \geq g^{\leftarrow}(M)} \big] < \epsilon\P(X_2 < X_1 \wedge X_3)/18.
\]
Therefore, for such an $M$ we have 
\[
\limsup_{n \to \infty} \sum_{2d \geq g^{\leftarrow}(M)} 3g(2d)/\tilde{\xi}_{0,n}(\Delta) < \epsilon/6, \ \ \text{a.s.}
\]
A similar argument holds for the term
\[
\sum_{\substack{-2b \geq g^{\leftarrow}(M)}} 3g(-2b),
\]
so the additivity of $\limsup$ furnishes that 
\[
\limsup_{n \to \infty} \sum_{d-b \geq g^{\leftarrow}(M)} g(d-b)/\tilde{\xi}_{0,n}(\Delta) < \epsilon/3, \ \ \text{a.s.}
\]
By Theorem~\ref{t:strong} and the triangle inequality, it remains to show that 
\[
\tilde{\xi}_{0}\big( (f-M')_+\big) < \epsilon/3
\]
for some $M' \geq M$, which follows from $\tilde{\xi}_0(f) < \infty$. 
\end{proof}

If all $X_i$ are nonnegative, we have an easy corollary to Theorem~\ref{t:cont_conv}. We omit the proof as it follows directly from the one above. 

\begin{corollary}
If $X_i \geq 0$ for all $i = 1, 2, \dots$ then Theorem~\ref{t:cont_conv} holds for $f(b,d) = g(d+b)$.
\end{corollary}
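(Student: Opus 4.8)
The plan is to repeat the proof of Theorem~\ref{t:cont_conv} essentially verbatim, exploiting the simplification that nonnegativity of the $X_i$ forces every birth--death pair $(b,d) \in \tilde\xi_{0,n}$ to satisfy $0 \le b < d$, hence $d+b \le 2d$. Fix $M>0$ large enough that $g$ is nondecreasing on $[g^{\leftarrow}(M),\infty)$ and split
\[
\frac{\tilde\xi_{0,n}(f)}{\tilde\xi_{0,n}(\Delta)} = \frac{\tilde\xi_{0,n}(f\wedge M)}{\tilde\xi_{0,n}(\Delta)} + \frac{\tilde\xi_{0,n}\big((f-M)_+\big)}{\tilde\xi_{0,n}(\Delta)}.
\]
The first term converges a.s.\ to $\tilde\xi_0(f\wedge M)$ by Theorem~\ref{t:strong}, since $f\wedge M$ is bounded and continuous on $\tilde\Delta$. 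So it remains to control the tail term uniformly in $n$, and then to let $M\to\infty$.

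For the tail, the unnormalized quantity is dominated by $\sum_{d+b\ge g^{\leftarrow}(M)} g(d+b)$, and here the three-region decomposition in the proof of Theorem~\ref{t:cont_conv} collapses: since $b\ge 0$ and $d>b$, on the event $d+b\ge g^{\leftarrow}(M)$ we also have $2d\ge d+b\ge g^{\leftarrow}(M)$, so monotonicity of $g$ past $g^{\leftarrow}(M)$ gives $g(d+b)\le g(2d)$ and hence
\[
\sum_{d+b\ge g^{\leftarrow}(M)} g(d+b) \ \le\ \sum_{2d\ge g^{\leftarrow}(M)} g(2d).
\]
By an argument paralleling Proposition~\ref{p:diag_pts}, each finite death coordinate $d$ of $\tilde\xi_{0,n}$ equals $X_i$ for a distinct strict local maximum $i$, so Birkhoff's ergodic theorem gives $n^{-1}\sum_{2d\ge g^{\leftarrow}(M)} g(2d)\to \E\big[g(2X_2)\ind{X_2>X_1\vee X_3}\ind{2X_2\ge g^{\leftarrow}(M)}\big]$ a.s. Hölder's inequality with $p=1+\epsilon$ and $q=p/(p-1)$ bounds this limit by $\big(\E[g(2X_2)^{p}]\big)^{1/p}\big(\P(2X_2\ge g^{\leftarrow}(M))\big)^{1/q}$, which tends to $0$ as $M\to\infty$ because $\E[g(2X_1)^{p}]<\infty$ (note $|X_1|=X_1$) and $g$ is coercive.

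Since $\tilde\xi_{0,n}(\Delta)/n\to\P(X_2<X_1\wedge X_3)>0$ a.s.\ (as established in the proof of Theorem~\ref{t:strong}), dividing through shows that for any prescribed $\epsilon'>0$ we may take $M$ large enough that $\limsup_n \tilde\xi_{0,n}\big((f-M)_+\big)/\tilde\xi_{0,n}(\Delta) < \epsilon'/3$ a.s. Enlarging $M$ further if necessary so that also $\tilde\xi_0\big((f-M)_+\big)<\epsilon'/3$---possible because $\tilde\xi_0(f)<\infty$---the two-term split above together with the triangle inequality and the a.s.\ convergence of the bounded part yields $\limsup_n \big|\tilde\xi_{0,n}(f)/\tilde\xi_{0,n}(\Delta) - \tilde\xi_0(f)\big| \le \epsilon'$ a.s.; letting $\epsilon'\downarrow 0$ along a countable sequence completes the argument. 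I do not expect a genuine obstacle: the only point requiring care is that nonnegativity legitimizes the bound $d+b\le 2d$ inside the truncation region (replacing the sign-region bookkeeping of Theorem~\ref{t:cont_conv}), and the remaining Birkhoff--Hölder--coercivity chain is identical to what is already carried out there.
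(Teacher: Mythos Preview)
Your proposal is correct and takes essentially the same approach as the paper, which omits the proof entirely with the remark that it follows directly from that of Theorem~\ref{t:cont_conv}. Your observation that nonnegativity collapses the three-region sign decomposition to the single bound $g(d+b)\le g(2d)$ is precisely the simplification the paper alludes to, and the remaining Birkhoff--H\"older--coercivity steps are identical.
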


The utility of Theorem~\ref{t:cont_conv} can be seen in the following section.

\subsection{Strong law of large numbers: two examples}
Strong laws of large numbers can be established from Theorem~\ref{t:cont_conv} for various quantities used in topological data science called \emph{persistence statistics}. For instance, we have a strong law of large numbers for degree-$p$ total persistence\footnote{See \cite{cohen2010} for a definition and \cite{divol_polonik} for the geometric complex result}, provided that 
\[
\E\big[|X_1|^{p+\epsilon}\big] < \infty.
\]
A more difficult example is \emph{persistent entropy} \citep{merelli2015, atienza2020}. Persistent entropy has been used as part of a suite of statistics in the studies of \cite{chung2021, chung_eat, chung2024} and \cite{bclr}, as well as to detect activation in the immune system \citep{rucco2016}, and to detect structure in nanoparticle images \citep{detectda, crozier2024}. The definition of persistent entropy (excluding the longest barcode) is
\[
E(X_1, \dots, X_n) \equiv E_n := -\sum_{(b,d) \in \tilde{\xi}_{0,n}} \frac{d-b}{L_n}\log \Bigg( \frac{d-b}{L_n} \Bigg),
\]
where $L_n := \sum_{(b,d) \in \tilde{\xi}_{0,n}} d-b$. Alternatively, we may represent $E_n$ as 
\begin{align*}
-L_n^{-1} \sum_{(b,d) \in \tilde{\xi}_{0,n}} (d-b)\log (d-b) + \log L_n,
\end{align*}
which will be of use in the corollary below.

Another nontrivial statistic of interest is the ALPS statistic, defined in \cite{detectda} and utilized in \cite{detectda}, \cite{crozier2024}, and \cite{bclr}. Its representation is
\[
A(X_1, \dots, X_n) \equiv A_n := \int_0^{\infty} \log \xi_{0,n}(\Delta_\ell) \, \dif{\ell},
\]
and we define a truncation of the ALPS statistic as $A_n^L := \int_0^{L} \log \xi_{0,n}(\Delta_\ell) \, \dif{\ell}.$ Before continuing, let us define $f_e(b,d) = (d-b)\log (d-b)$ and $f_I(b,d) = d-b$. Both $f_e + 1$ and $f_I$ are continuous, coercive, and largely nondecreasing in $d-b$.

\begin{corollary}\label{c:pe_alps}
Assuming the conditions of Theorems~\ref{t:strong} and \ref{t:cont_conv}, we have that 
\[
E_n - \log \tilde{\xi}_{0,n}(\Delta) \to \frac{\tilde{\xi}_0(f_e)}{\tilde{\xi}_0(f_I)} + \log \tilde{\xi}_0(f_I), \ \ \mathrm{a.s.},
\]
and for any $L > 0$ with $\xi_{0}(\Delta_L) > 0$ we have 
\[
L\log \xi_{0,n}(\Delta) - A_n^L \to -\int^L_0 \log \xi_0(\Delta_\ell) \dif{\ell}, \ \ \mathrm{a.s.},
\]
as $n \to \infty$. That is, the sublevel set persistent entropy and the ALPS statistic of a stationary and ergodic process converge almost surely. 
\end{corollary}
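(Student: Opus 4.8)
The plan is to handle the two assertions separately, in each case rewriting the statistic in terms of quantities already controlled by Theorems~\ref{t:strong}--\ref{t:cont_conv} and Corollary~\ref{c:gliv_emp} and then appealing to continuity (the continuous mapping theorem) and, for the ALPS statistic, bounded convergence.

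For persistent entropy I would start from the representation $E_n = -L_n^{-1}\tilde\xi_{0,n}(f_e) + \log L_n$ noted above, using $L_n = \tilde\xi_{0,n}(f_I)$ and $\sum_{(b,d)\in\tilde\xi_{0,n}}(d-b)\log(d-b) = \tilde\xi_{0,n}(f_e)$. Subtracting $\log\tilde\xi_{0,n}(\Delta)$ and normalizing numerator and denominator by $\tilde\xi_{0,n}(\Delta)$ gives
\[
E_n - \log\tilde\xi_{0,n}(\Delta) = -\,\frac{\tilde\xi_{0,n}(f_e)/\tilde\xi_{0,n}(\Delta)}{\tilde\xi_{0,n}(f_I)/\tilde\xi_{0,n}(\Delta)} + \log\!\left(\frac{\tilde\xi_{0,n}(f_I)}{\tilde\xi_{0,n}(\Delta)}\right).
\]
Since $f_e$ is neither nonnegative nor coercive, Theorem~\ref{t:cont_conv} does not apply to it directly, so I would instead apply it to $f_e+1$, which is continuous, positive, coercive and largely nondecreasing in $d-b$ and is $\tilde\xi_0$-integrable under the standing hypotheses. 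Because $\tilde\xi_{0,n}(f_e+1) = \tilde\xi_{0,n}(f_e) + \tilde\xi_{0,n}(\Delta)$, this yields $\tilde\xi_{0,n}(f_e)/\tilde\xi_{0,n}(\Delta)\to\tilde\xi_0(f_e)$ a.s., and applying Theorem~\ref{t:cont_conv} to $f_I$ gives $\tilde\xi_{0,n}(f_I)/\tilde\xi_{0,n}(\Delta)\to\tilde\xi_0(f_I)$ a.s. As $\tilde\xi_0(f_I) = \int_{\tilde\Delta}(d-b)\,\tilde\xi_0(\dif{b},\dif{d}) > 0$ (the integrand is strictly positive on $\tilde\Delta$ and $\tilde\xi_0$ is a probability measure), division and $\log$ are continuous at the limiting values, so on the intersection of these two probability-one events the right-hand side converges a.s.\ to the claimed limit for $E_n - \log\tilde\xi_{0,n}(\Delta)$.

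For the ALPS statistic, fix $L>0$ with $\xi_0(\Delta_L)>0$ and write
\[
L\log\xi_{0,n}(\Delta) - A_n^L = \int_0^L\Big(\log\xi_{0,n}(\Delta) - \log\xi_{0,n}(\Delta_\ell)\Big)\,\dif{\ell} = -\int_0^L\log\!\left(\frac{\xi_{0,n}(\Delta_\ell)}{\xi_{0,n}(\Delta)}\right)\,\dif{\ell},
\]
and then pass to the limit inside the integral. Working on a probability-one event on which Corollary~\ref{c:gliv_emp} holds, note that $\ell\mapsto\xi_0(\Delta_\ell)$ is nonincreasing, so $\xi_0(\Delta_\ell)\geq\xi_0(\Delta_L)>0$ for $\ell\in[0,L]$; combined with the uniform convergence of Corollary~\ref{c:gliv_emp}, for all large $n$ the ratio $\xi_{0,n}(\Delta_\ell)/\xi_{0,n}(\Delta)$ lies in $[\tfrac12\xi_0(\Delta_L),\,1]$ uniformly in $\ell\in[0,L]$. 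On that compact range $\log$ is bounded and uniformly continuous, hence $\log(\xi_{0,n}(\Delta_\ell)/\xi_{0,n}(\Delta))\to\log\xi_0(\Delta_\ell)$ uniformly on $[0,L]$; since $[0,L]$ has finite length the integrals converge, giving $L\log\xi_{0,n}(\Delta) - A_n^L\to -\int_0^L\log\xi_0(\Delta_\ell)\,\dif{\ell}$ a.s.

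The main obstacle is really just the handling of the logarithm. For entropy, $f_e$ fails the sign and coercivity requirements of Theorem~\ref{t:cont_conv}, which is what forces the shift to $f_e+1$ and the passage through ratios rather than differences, and one must separately note $\tilde\xi_0(f_I)>0$ so that $\log$ is continuous at the limit; for the ALPS statistic, $\log\xi_{0,n}(\Delta_\ell)$ must be kept bounded away from $-\infty$ uniformly in $\ell\in[0,L]$, which is exactly what the monotonicity of $\ell\mapsto\xi_0(\Delta_\ell)$, the hypothesis $\xi_0(\Delta_L)>0$, and the uniform Glivenko--Cantelli control of Corollary~\ref{c:gliv_emp} provide. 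Once these points are secured, both statements follow at once from the continuous mapping theorem and bounded convergence.
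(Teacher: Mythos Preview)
Your proposal is correct and follows essentially the same route as the paper: for persistent entropy both you and the paper shift to $f_e+1$, normalise numerator and denominator by $\tilde\xi_{0,n}(\Delta)$, and pass to the limit via Theorem~\ref{t:cont_conv} and continuity of the quotient and logarithm; for the ALPS statistic both rewrite the difference as $-\int_0^L\log\big(\xi_{0,n}(\Delta_\ell)/\xi_{0,n}(\Delta)\big)\,\dif{\ell}$ and use Corollary~\ref{c:gliv_emp} together with the monotonicity of $\ell\mapsto\xi_0(\Delta_\ell)$ and the hypothesis $\xi_0(\Delta_L)>0$ to bound the integrand uniformly before passing to the limit. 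The only difference is cosmetic---you argue uniform convergence of the integrand where the paper simply invokes bounded convergence---and your justification of $\tilde\xi_0(f_I)>0$ is a welcome detail the paper leaves implicit.
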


\begin{proof}
The proof follows fairly simply from Theorem~\ref{t:cont_conv}. We know that
\begin{equation*} 
E_n = \frac{-\tilde{\xi}_{0,n}(f_e+1) + \tilde{\xi}_{0,n}(\Delta)}{\tilde{\xi}_{0,n}(f_I)} + \log \tilde{\xi}_{0,n}(f_I).
\end{equation*}
Subtracting $\log \tilde{\xi}_{0,n}(\Delta)$ and applying Theorem~\ref{t:cont_conv} yields a limit of 
\[
\frac{-\tilde{\xi}_{0}(f_e+1) + 1}{\tilde{\xi}_{0}(f_I)} + \log \tilde{\xi}_{0}(f_I),
\]
which finishes the proof, as $\tilde{\xi}_{0}$ is a probability measure. For the ALPS statistic, we see that 
\[
L\log \xi_{0,n}(\Delta) - A_n^L = \int^L_0 \log \bigg( \frac{\xi_{0,n}(\Delta)}{\xi_{0,n}(\Delta_\ell)} \bigg) \dif{\ell}.
\]
If we fix a positive $\epsilon < \xi_{0}(\Delta_L)$, Corollary~\ref{c:gliv_emp} implies that for $n \geq N(\omega)$ ($N$ depending on the sample point $\omega \in \Omega$), we have 
\[
\log \bigg( \frac{\xi_{0,n}(\Delta)}{\xi_{0,n}(\Delta_\ell)}\bigg) \leq -\log (\xi_0(\Delta_\ell) - \epsilon) \leq -\log (\xi_0(\Delta_L) - \epsilon), 
\]
for all $\ell \in [0, L]$. Therefore, the bounded convergence assumption holds for all $\omega \in \Omega$ such that convergence holds. Hence, our result follows almost surely. \end{proof}

Having demonstrated our strong law of large numbers for persistence diagrams, and its ramifications, we now turn our attention to the central limit theorem.

\section{Central limit theorem} \label{s:clt}

In this section, we prove a central limit theorem for the integral $\xi_{0,n}(f)$, where $f$ is a step function. This follows from proving a CLT for linear combinations of persistent Betti numbers $\beta^{s,t}_{0,n}$ using the Lindeberg method for weakly dependent triangular arrays given in \cite{neumann2013}. The desired result will follow as a consequence of demonstrating
\begin{equation*}
n^{-1/2}\sum_{l=1}^m a_l \Big( \beta_{0,n}^{s_l, t_l} - \E[\beta_{0,n}^{s_l, t_l} ]\Big).
\end{equation*}
obeys a central limit theorem when $\X_n$ obeys weak dependence conditions (to be specified below) and $a_1, \dots, a_m$ are arbitrary real numbers. The reason for this is that if $R_l = (s_1, s_2] \times (t_1, t_2]$ then 
\[
\mathbf{1}_{R_l} = \mathbf{1}_{(-\infty, s_2]\times(t_1, \infty]} - \mathbf{1}_{(-\infty, s_2]\times(t_2, \infty]} - \mathbf{1}_{(-\infty, s_1]\times(t_1, \infty]} + \mathbf{1}_{(-\infty, s_1]\times(t_2, \infty]}.
\]
The Cr{\'a}mer-Wold device also provides us with finite-dimensional weak convergence as an added benefit.

As for the aforementioned notions of weak dependence, the one we employ is that of $\rho$-mixing. To begin, note that for any two sub-$\sigma$ algebras $\mathcal{A}, \mathcal{B} \subset \mathcal{F}$ we define 
\[
\rho(\mathcal{A}, \mathcal{B}) := \sup_{\substack{X \in L^2(\mathcal{A}), \,Y \in L^2(\mathcal{B})}}  \big|  \text{Corr}(X, Y) \big|,
\]
where $L^2(\mathcal{A})$ (resp. $L^2(\mathcal{B})$) is the space of square-integrable $\mathcal{A}$-measurable (resp. $\mathcal{B}$-measurable) random variables\footnote{For random variables $X, Y$ the value $\text{Corr}(X,Y) = \Cov(X,Y)/\sqrt{\Var(X)\Var(Y)}$.}.  Furthermore, we define 
\[
\rho_\X(k) := \sup_{m \in \N} \rho\big(\sigma(X_1, \dots, X_m), \sigma(X_{m+k}, X_{m+k+1}, \dots) \big),
\]
so that the stochastic process $\X = (X_1, X_2, \dots)$ is said to be $\rho$-mixing if $\rho_\X(k) \to 0$ as $k \to \infty$. For our limit theorems, we will require that $\sum_{k=1}^{\infty} \rho_\X(k) < \infty$, which implies $\rho$-mixing. More details on $\rho$-mixing and other mixing conditions can be seen in \cite{bradley2005}. Another particularly important condition for our proofs is that our stationary process obeys a certain condition on the probability distributions of the partial maxima decaying sufficiently quickly. This serves to limit any percolation-esque phenomena that would preclude a central limit theorem.

\begin{definition}
A stationary stochastic process $\X = (X_1, X_2, \dots)$ with marginal distribution function $F$ is said to be \emph{max-root summable} if for all $t$ with $F(t) < 1$ we have
\begin{equation*} 
\sum_{i=1}^\infty i \sqrt{\P(X_1 \leq t, \dots, X_i \leq t)} < \infty. 
\end{equation*}
\end{definition}
Before stating our main theorem, we will establish conditions on the stochastic process that guarantee max-root summability. 

\begin{proposition} \label{p:max_root_suff}
Suppose that $\X$ is a stationary stochastic process. If there is some $\epsilon > 0$ s.t.
$$
\P(X_1 \leq t, \dots, X_n \leq t) = O(n^{-4-\epsilon}),
$$
for all $t$ with $F(t) < 1$, then $\X$ is max-root summable.
\end{proposition}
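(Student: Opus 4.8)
The plan is to estimate the summand directly from the hypothesized polynomial decay; there is essentially no obstacle here beyond bookkeeping. Fix a value $t$ with $F(t) < 1$ and abbreviate $q_i := \P(X_1 \leq t, \dots, X_i \leq t)$. By the assumption $q_i = O(i^{-4-\epsilon})$ (with the $O$-constant and threshold allowed to depend on $t$, which is harmless since max-root summability is a condition to be checked for each such $t$ separately), there exist a constant $C = C(t) > 0$ and an index $N = N(t)$ such that $q_i \leq C\, i^{-4-\epsilon}$ for all $i \geq N$.

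Taking square roots and multiplying by $i$ gives $i\sqrt{q_i} \leq \sqrt{C}\, i^{1-(4+\epsilon)/2} = \sqrt{C}\, i^{-1-\epsilon/2}$ for $i \geq N$. I would then split the series as
\[
\sum_{i=1}^{\infty} i\sqrt{q_i} = \sum_{i=1}^{N-1} i\sqrt{q_i} + \sum_{i=N}^{\infty} i\sqrt{q_i},
\]
where the first sum is a finite sum of finite terms (each $q_i \leq 1$), and the second is bounded by $\sqrt{C}\sum_{i=N}^{\infty} i^{-1-\epsilon/2}$, which converges since $1+\epsilon/2 > 1$ (comparison with a convergent $p$-series, or the integral test). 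Hence $\sum_{i=1}^\infty i\sqrt{q_i} < \infty$ for every $t$ with $F(t) < 1$, which is exactly the definition of max-root summability.

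The only point that merits a sentence of care is the $t$-dependence of the constants in the $O$-bound; since the definition of max-root summability quantifies over each fixed $t$ with $F(t) < 1$ individually, no uniformity in $t$ is needed and the argument goes through verbatim. I do not anticipate any genuine difficulty: the proposition is precisely the observation that the exponent $-4-\epsilon$ is calibrated so that $i \cdot i^{-(4+\epsilon)/2}$ is a summable power of $i$.
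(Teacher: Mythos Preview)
Your proof is correct and follows essentially the same approach as the paper: bound $i\sqrt{q_i}$ by $\sqrt{C_t}\,i^{-1-\epsilon/2}$ and invoke summability of the resulting $p$-series. Your version is slightly more explicit about the threshold $N$ and the $t$-dependence of the constants, but the argument is the same.
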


\begin{proof}
If the condition above holds there is some $C_t$ such that 
\[
n\sqrt{\P(X_1 \leq t, \dots, X_n \leq t)} \leq \sqrt{C_t}n^{-1-\epsilon/2}, 
\]
the right-hand side of which is clearly summable.
\end{proof}

\begin{example}\label{e:markov}
Suppose that $\X$ is a (stationary) Markov chain with transition kernel $P$ such that for every $t$ with $F(t) < 1$ there is some $\eta_t > 0$ that satisifies
$$
\sup_{x \leq t} P\big(x, (-\infty, t]\big) \leq 1-\eta_t.
$$
By Theorem 3.4.1 in \cite{meyn2009}, we have that 
\begin{align*}
&\P(X_1 \leq t, \dots, X_n \leq t) \\
&\qquad\qquad= \int_{x_1 \leq t} \cdots \int_{x_{n-1} \leq t} F(\dif x_1)P(x_1, \dif x_2) \cdots P(x_{n-2}, \dif x_{n-1}) P\big(x_{n-1}, (-\infty, t]\big) \\
&\qquad\qquad\leq \int_{x_1 \leq t} \cdots \int_{x_{n-1} \leq t} F(\dif x_1)P(x_1, \dif x_2) \cdots P(x_{n-2}, \dif x_{n-1})(1-\eta_t).
\end{align*}
Therefore, induction furnishes that 
\[
\P(X_1 \leq t, \dots, X_n \leq t) \leq F(t)(1-\eta_t)^{n-1},
\]
and the condition in Proposition~\ref{p:max_root_suff} can be simply established. 
\end{example}

\begin{example}\label{e:mdep}
Suppose that $\X$ is stationary and $m$-dependent, i.e. $\psi_{\X}(k) = 0$ for all $k \geq m+1$. Then we have 
\begin{align*}
\P(X_1 \leq t, \dots, X_n \leq t) &\leq \P(X_1 \leq t, X_{m+2} \leq t, \dots, X_{\lfloor \frac{n-1}{m+1} \rfloor (m+1)+1} \leq t) \\
&= F(t)^{\lfloor \frac{n-1}{m+1} \rfloor + 1}.
\end{align*}
Because $F(t) = 0$ establishes max-root summability trivially, we take $0 < F(t) < 1$. Then as $(\lfloor \frac{n-1}{m+1} \rfloor + 1) \log \big[1/F(t)\big] \geq k \log n$ for any $k > 0$ and $n$ large enough, then the condition in Proposition~\ref{p:max_root_suff} is established.
\end{example}

To establish our CLT (Theorem~\ref{t:bst_clt} below), we first need to assess the limiting behavior of the covariance, the proof of which is deferred to the end of Section~\ref{s:clt_proof}.

\begin{proposition}\label{p:cov}
Let $\X$ be a stationary stochastic process that is max-root summable and satisfies the condition $\sum_{k=1}^{\infty} \rho_\X(k) < \infty$. Assume further that the marginal distribution of $X_i$ is continuous with distribution $F$. Suppose that $-\infty < s_i \leq t_i \leq \infty$ for $i=1,2$ with $F(s_1 \wedge s_2) > 0$ and $F(t_1 \vee t_2) < 1$.
\begin{align*}
&\lim_{n \to \infty} n^{-1}\mathrm{Cov}\Big(\beta_{0,n}^{s_1, t_1}, \beta_{0,n}^{s_2, t_2}\Big) \\
&\qquad \qquad = \mathrm{Cov}\big(Y_2^{\infty}(s_1, t_1), Y_2^{\infty}(s_2, t_2) \big) \\
&\qquad \qquad + \sum_{k=1}^{\infty} \bigg[ \mathrm{Cov}\big(Y_2^{\infty}(s_1, t_1), Y_{2+k}^{\infty}(s_2, t_2) \big) + \mathrm{Cov}\big(Y_{2+k}^{\infty}(s_1, t_1), Y_{2}^{\infty}(s_2, t_2) \big) \bigg].
\end{align*}
where the terms $Y_j^{\infty}(s,t)$ are defined at  \eqref{e:ymjst} respectively.
\end{proposition}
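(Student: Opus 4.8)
The plan is to compare each $\beta_{0,n}^{s_i,t_i}$ with the boundary-free partial sum $S_n(s_i,t_i):=\sum_{j=2}^{n+1}Y_j^\infty(s_i,t_i)$, to show the discrepancy is $O(1)$ in $L^2$, and then to evaluate $n^{-1}\Cov(S_n(s_1,t_1),S_n(s_2,t_2))$ by the classical covariance-series identity for stationary sequences; the only real work is establishing absolute summability of the lag covariances, and this is exactly where $\sum_k\rho_\X(k)<\infty$ and max-root summability enter. (The hypothesis $F(s_1\wedge s_2)>0$ only serves to exclude the trivial case in which the $\beta$'s vanish identically.)

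\emph{Step 1 (reduction).} I would first note that each $Y_j^\infty(s,t)$ in \eqref{e:ymjst} is $\{0,1\}$-valued: at most one $i$ gives a nonzero summand, since $\bigvee_{k=j}^{j+i'-1}X_k\le t$ for $i'>i$ forces $X_{j+i}\le t$, contradicting $X_{j-1}\wedge X_{j+i}>t$. So $Y_j^\infty(s,t)=\ind{X_{j-1}>t}$ times the indicator that the run of ${\le}t$ values beginning at index $j$ is finite, nonempty, and has minimum $\le s$; and for $2\le j\le n$ one checks that $Y_{j,n}^{n-j+1}(s,t)$ differs from $Y_j^\infty(s,t)$ only when that run reaches index $n$. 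This yields
\[
\big|\beta_{0,n}^{s,t}-S_n(s,t)\big|\le C+N_n(t),\qquad N_n(t):=\#\{\,j\le n:\ X_j\le t,\dots,X_n\le t\,\},
\]
with $C$ absolute. Because $\P(N_n(t)\ge\ell)=\P(X_1\le t,\dots,X_\ell\le t)=:q_\ell(t)$, one gets $\E[N_n(t)^2]\le 2\sum_{\ell\ge1}\ell\,q_\ell(t)\le 2\sum_{\ell\ge1}\ell\sqrt{q_\ell(t)}<\infty$ uniformly in $n$ (using $q_\ell\le\sqrt{q_\ell}$ and max-root summability, valid since $F(t_1\vee t_2)<1$). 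Hence $\|\beta_{0,n}^{s_i,t_i}-S_n(s_i,t_i)\|_2=O(1)$.

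\emph{Step 2 (summable lag covariances).} The pair process $\big(Y_j^\infty(s_1,t_1),Y_j^\infty(s_2,t_2)\big)_{j\ge2}$ is stationary, so it suffices to bound $\sum_{k\ge1}\big|\Cov\big(Y_2^\infty(s_1,t_1),Y_{2+k}^\infty(s_2,t_2)\big)\big|$ and the interchanged series. Since $Y_j^\infty$ depends a priori on the whole future, $\rho$-mixing cannot be used on it directly, so I would truncate: $Y_j^m(s,t)$ is $\sigma(X_{j-1},\dots,X_{j+m})$-measurable, $\{0,1\}$-valued, and $\|Y_j^\infty(s,t)-Y_j^m(s,t)\|_2^2\le\P(X_j\le t,\dots,X_{j+m}\le t)=q_{m+1}(t)$. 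For $m\le k-2$ the two index blocks are separated by a gap $k-1-m$, so $|\Cov(Y_j^m(s_1,t_1),Y_{j+k}^m(s_2,t_2))|\le\tfrac14\rho_\X(k-1-m)$; combining with the truncation error via $|\Cov(A,B)|\le\|A-\E A\|_2\|B-\E B\|_2$ and taking $m=\lceil k/2\rceil-1$,
\[
\big|\Cov\big(Y_2^\infty(s_1,t_1),Y_{2+k}^\infty(s_2,t_2)\big)\big|\le\tfrac14\rho_\X(\lfloor k/2\rfloor)+C\big(\sqrt{q_{\lceil k/2\rceil}(t_1)}+\sqrt{q_{\lceil k/2\rceil}(t_2)}\big),
\]
which is summable in $k$ because $\sum_k\rho_\X(k)<\infty$ and $\sum_m\sqrt{q_m(t_i)}\le\sum_m m\sqrt{q_m(t_i)}<\infty$. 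The interchanged series obeys the same bound.

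\emph{Step 3 (assembly) and the main obstacle.} Write $\beta_{0,n}^{s_i,t_i}=S_n(s_i,t_i)+R_n^{(i)}$ with $\|R_n^{(i)}\|_2=O(1)$; Step 2 applied with $i_1=i_2$ gives $\Var(S_n(s_i,t_i))=O(n)$, so expanding $\Cov(\beta_{0,n}^{s_1,t_1},\beta_{0,n}^{s_2,t_2})$ bilinearly, every cross term involving an $R_n^{(i)}$ is $O(\sqrt n)$ and vanishes after dividing by $n$. Finally, stationarity gives $n^{-1}\Cov(S_n(s_1,t_1),S_n(s_2,t_2))=\sum_{|k|<n}(1-|k|/n)\,c(k)$ with $c(k):=\Cov(Y_2^\infty(s_1,t_1),Y_{2+k}^\infty(s_2,t_2))$ for $k\ge0$ and $c(k):=\Cov(Y_{2+|k|}^\infty(s_1,t_1),Y_2^\infty(s_2,t_2))$ for $k<0$; dominated convergence against the summable bound of Step 2 sends this to $\sum_{k\in\mathbb Z}c(k)$, which is exactly the claimed expression. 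The main obstacle is the truncation balancing in Step 2: $m$ must be tuned so that both the mixing coefficient at gap $\sim k-m$ and the run-overshoot probability $\sqrt{q_{m+1}}$ become summable in $k$ at once, which is precisely why both dependence conditions are imposed; and the $O(1)$ (not merely $o(\sqrt n)$) control of $N_n$ in Step 1 is what consumes the full strength $\sum_m m\sqrt{q_m(t)}<\infty$.
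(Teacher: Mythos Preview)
Your argument is correct, and it proceeds quite differently from the paper's own proof. The paper works directly at the level of the individual indicators $C^n_{i,j}(s,t)$ comprising $\beta^{s,t}_{0,n}$: it expands the covariance into a quadruple sum over $(i_1,i_2,j_1,j_2)$, separates out the ``boundary'' contributions (those touching $j=1$ or $j+i=n+1$) and shows they are $o(n)$ via a dedicated lemma, invokes a combinatorial observation that certain products $C^n_{i_1,j}C^n_{i_2,j+k}$ vanish identically, and then verifies dominated convergence summand by summand using max-root summability and $\rho$-mixing. Your route instead packages the boundary effects into a single $L^2$-bounded remainder $R_n^{(i)}$, reduces to the classical Ces\`aro identity $n^{-1}\Cov(S_n,S_n')=\sum_{|k|<n}(1-|k|/n)c(k)$ for stationary sequences, and isolates all of the analytic work in the truncation balancing of Step~2, where splitting $k$ in half lets $\rho$-mixing control one piece and max-root summability the other. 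Your approach is shorter and more conceptual; the paper's approach, though more laborious, stays closer to the $C_{i,j}$ decomposition and yields the intermediate representations \eqref{e:covsum1}--\eqref{e:covsum3} that are reused in the CLT proof to handle the truncated Betti numbers $\beta^{s,t}_{0,n,K}$.

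Two small remarks. First, your Step~1 can be sharpened: for $2\le j\le n$ the discrepancy $Y_{j,n}^{n-j+1}(s,t)-Y_j^\infty(s,t)$ is nonzero only when $X_{j-1}>t$ and $X_j,\dots,X_n\le t$, and at most one $j$ can satisfy this, so in fact $|\beta_{0,n}^{s,t}-S_n(s,t)|\le 3$ deterministically. Thus max-root summability is not actually needed for Step~1; it is genuinely consumed only in Step~2. Second, in Step~3 be sure to invoke Cauchy--Schwarz on the \emph{centered} variables, i.e.\ $|\Cov(S_n,R_n^{(2)})|\le\sqrt{\Var S_n}\sqrt{\Var R_n^{(2)}}$, since $\|S_n\|_2$ itself is of order $n$, not $\sqrt n$; you have the right inputs ($\Var S_n=O(n)$ from Step~2, $\Var R_n^{(2)}=O(1)$ from Step~1), so this is only a matter of phrasing.
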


With this all at hand, we may finally state the central limit theorem.

\begin{theorem}\label{t:bst_clt}
Let $\X$ be a stationary stochastic process that is max-root summable and satisfies the condition $\sum_{k=1}^{\infty} \rho_\X(k) < \infty$. Assume further that the marginal distribution of $X_i$ is continuous with distribution $F$. Then for any function $f = \sum_{l=1}^m a_l \mathbf{1}_{R_l}$ with $a_l \in \R$ and $R_l \in \mathcal{R}$, $l = 1,\dots, m$, if the corners $(s,t)$ of the rectangles satisfy $F(s) > 0$ and $F(t) < 1$ we have:
\[
n^{-1/2}\big(\xi_{0,n}(f) -  \E[\xi_{0,n}(f)]\big) \Rightarrow N(0, I_f), 
\]
and if each of the coordinates of $R_l$ lie in $\R$ for $l = 1,\dots,m$ then
\[
n^{-1/2}\big(\tilde{\xi}_{0,n}(f) -  \E[\tilde{\xi}_{0,n}(f)]\big) \Rightarrow N(0, I_f), 
\]
as $n \to \infty$, where $I_f$ is a nonnegative constant depending on $f$.
\end{theorem}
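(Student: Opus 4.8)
The plan is to reduce the statement to a central limit theorem for arbitrary linear combinations of persistent Betti numbers and then invoke the triangular-array CLT of \cite{neumann2013}. By \eqref{e:flph} and linearity, for $f=\sum_{l=1}^m a_l\mathbf 1_{R_l}$ the centred quantity $n^{-1/2}\bigl(\xi_{0,n}(f)-\E[\xi_{0,n}(f)]\bigr)$ is a fixed linear combination $\sum_p c_p\, n^{-1/2}\bigl(\beta_{0,n}^{u_p,v_p}-\E[\beta_{0,n}^{u_p,v_p}]\bigr)$, where each corner $(u_p,v_p)$ arises from some $R_l$ and hence satisfies $F(u_p)>0$, $F(v_p)<1$; moreover every $R_l$ with finite corners is contained in $\tilde\Delta$, so $\xi_{0,n}$ and $\tilde\xi_{0,n}$ agree on it and the second display follows from the first. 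Thus it suffices to prove that, for any reals $a_1,\dots,a_m$ and corners $(s_l,t_l)$ with $F(s_l)>0$, $F(t_l)<1$,
\[
W_n := n^{-1/2}\sum_{l=1}^m a_l\bigl(\beta_{0,n}^{s_l,t_l}-\E[\beta_{0,n}^{s_l,t_l}]\bigr)\ \Rightarrow\ N(0,I),
\]
with the Cram\'er--Wold device then yielding the finite-dimensional convergence as a byproduct. By Proposition~\ref{p:beta0st} (after the reindexing used in the proof of Theorem~\ref{t:strong}), $W_n = n^{-1/2}\sum_{j=1}^n\bigl(Z_{n,j}-\E[Z_{n,j}]\bigr)$ with $Z_{n,j}:=\sum_{l=1}^m a_l\,Y_{j,n}^{\,n-j+1}(s_l,t_l)$; since each $Y_{j,n}^{\,n-j+1}(s_l,t_l)\in\{0,1\}$, the row entries are bounded by $\sum_l|a_l|$, so the Lindeberg condition of \cite{neumann2013} holds automatically.

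It then remains to verify the two substantive hypotheses of that theorem for the array $(Z_{n,j})_{1\le j\le n}$. The first is convergence of the normalized variance $n^{-1}\Var\bigl(\sum_{j=1}^n Z_{n,j}\bigr)\to I$, which upon bilinear expansion is exactly the $m$-fold version of Proposition~\ref{p:cov}, with
\begin{align*}
I &= \sum_{l,l'=1}^m a_l a_{l'}\Bigl[\Cov\bigl(Y_2^{\infty}(s_l,t_l),\,Y_2^{\infty}(s_{l'},t_{l'})\bigr) \\
&\qquad\quad + \sum_{k=1}^{\infty}\Bigl(\Cov\bigl(Y_2^{\infty}(s_l,t_l),\,Y_{2+k}^{\infty}(s_{l'},t_{l'})\bigr)+\Cov\bigl(Y_{2+k}^{\infty}(s_l,t_l),\,Y_2^{\infty}(s_{l'},t_{l'})\bigr)\Bigr)\Bigr],
\end{align*}
which is a limit of variances and hence nonnegative; this is the constant $I_f$. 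The second is Neumann's weak-dependence condition, which asks that covariances of a row entry with a product of other row entries lying at distance at least $r$ from it be bounded, uniformly in $n$, by a summable sequence in $r$.

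This last verification is the heart of the argument and the main obstacle, because $Z_{n,j}$ is not measurable with respect to any bounded block of coordinates: it is nonzero only when the run $X_j\le t_l, X_{j+1}\le t_l,\dots$ of consecutive sub-$t_l$ values starting at index $j$ is flanked by larger values, and that run can be arbitrarily long. The remedy is to truncate run lengths: for a level $M$ replace $Z_{n,j}$ by $Z_{n,j}^{(M)}:=\sum_{l=1}^m a_l\,Y_{j,n}^{M}(s_l,t_l)$, which depends only on $X_{j-1},\dots,X_{j+M}$ (with the usual conventions near the endpoints). For index sets separated by $r>M$, the $\rho$-mixing bound $|\Cov(U,V)|\le \rho_{\X}(r-M-1)\sqrt{\Var U\,\Var V}$ applies verbatim to products of truncated entries, and $\sum_k\rho_{\X}(k)<\infty$ makes this summable in $r$. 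The truncation error obeys $|Z_{n,j}-Z_{n,j}^{(M)}|\le \sum_{l=1}^m|a_l|\,\ind{X_j\le t_l,\dots,X_{j+M}\le t_l}$, whose $L^2$-norm is at most $C\max_l\sqrt{\P(X_1\le t_l,\dots,X_{M+1}\le t_l)}$ by stationarity. Substituting this into a covariance expansion of $\Cov(Z_{n,j},g(\cdots))$, choosing the truncation level $M$ to grow proportionally to the gap $r$, and summing, the resulting series is controlled precisely because $\X$ is max-root summable, i.e.\ $\sum_i i\sqrt{\P(X_1\le t,\dots,X_i\le t)}<\infty$ for $F(t)<1$ — the weight $i$ accounting for the number of contributing index pairs at a given gap. (Only $O(M)$ boundary indices $j$ need separate, trivial treatment, contributing $O(M/\sqrt n)\to0$.) This establishes the weak-dependence hypothesis, so \cite{neumann2013} yields $W_n\Rightarrow N(0,I)$, and the reduction of the first paragraph completes the proof with $I_f=I$.
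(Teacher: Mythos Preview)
Your reduction to linear combinations of persistent Betti numbers, the appeal to Proposition~\ref{p:cov} for the variance limit, and the Lindeberg check are all fine and match the paper. The substantive divergence is in how Neumann's weak-dependence hypotheses \eqref{e:neu3}--\eqref{e:neu4} are verified, and here your argument has a genuine gap.

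Neumann's conditions require a bound of the form
\[
\bigl|\Cov\bigl(g(W_{u_1,n},\dots,W_{u_q,n})W_{u_q,n},\,W_{v_1,n}\bigr)\bigr|\le \theta_r\bigl(\E[W_{u_q,n}^2]+\E[W_{v_1,n}^2]+n^{-1}\bigr)
\]
valid for \emph{every} $q$ and every admissible index configuration, with $\theta_r$ depending only on $r=v_1-u_q$. Your truncation $Z_{n,j}^{(M)}$ controls each $|Z_{n,j}-Z_{n,j}^{(M)}|$ individually, but to make the left block $g(W_{u_1,n},\dots,W_{u_q,n})W_{u_q,n}$ measurable with respect to $\sigma(X_1,\dots,X_{u_q+M})$ you must replace all $q$ entries by their truncations. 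Since $g$ is merely bounded and measurable (not Lipschitz), the replacement error is governed by the event $\bigcup_{i\le q}\{Z_{n,u_i}\neq Z_{n,u_i}^{(M)}\}$, whose probability picks up a factor of $q$ via the union bound. That factor is not allowed in $\theta_r$. (Your closing remark about ``the weight $i$ accounting for the number of contributing index pairs at a given gap'' reads as if you are summing pairwise covariances, which is the variance computation of Proposition~\ref{p:cov}, not Neumann's condition.) The gap is fixable: truncate at the fixed \emph{index} $u_q+M$ rather than at run length $M$ for each entry; then any run starting at some $u_i\le u_q$ that extends past $u_q+M$ forces $X_{u_q},\dots,X_{u_q+M}\le t_l$, a single event whose probability is bounded uniformly in $q$ by max-root summability.

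The paper avoids this difficulty altogether by a different route: it fixes a truncation level $K$, applies Neumann's theorem to the $K$-truncated array $\beta_{0,n,K}^{s,t}=\sum_j Y_{j,n}^{(n-j+1)\wedge K}(s_l,t_l)$ (for which each row entry depends on at most $K+2$ coordinates, so \eqref{e:neu3}--\eqref{e:neu4} reduce immediately to $\theta_r=\rho_{\X}(r-K-1)$ for $r>K+1$), obtains $Z_{n,K}\Rightarrow N(0,\sigma_K^2)$ for each $K$, and then passes $K\to\infty$ via Theorem~3.2 of \cite{billingsley} by showing $\lim_K\limsup_n \P(|Z_n-Z_{n,K}|\ge\epsilon)=0$ through a Chebyshev bound on $n^{-1}\Var(\beta_{0,n}^{s_l,t_l}-\beta_{0,n,K}^{s_l,t_l})$. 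This two-step scheme sidesteps the $q$-uniformity issue entirely, at the cost of an extra limiting argument; your one-step scheme, once repaired as above, is more direct but requires the sharper truncation.
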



We defer the proof to Section~\ref{s:clt_proof}.
%
\section{Discussion}\label{s:discuss}

In this paper, we have demonstrated a strong law of large numbers for a large class of integrals with the respect to the random measure induced by the $0^{th}$ sublevel set persistent homology of general stationary and ergodic processes. We also proved a central limit theorem for the same random measure for a large class of step functions. As the SLLNs---by consideration of the negated process $-X_1, -X_2, \dots$---also pertain to superlevel sets, it would be interesting to consider the limiting behavior of the persistent homology of the extremes of a stationary stochastic process; the reason is due to the natural connection between the superlevel set value $\beta_{0,n}^{u_n(\tau), u_n(\tau)}$ (number of connected components above levels $u_n(\tau)$, $\tau \geq 0$) and the clusters of exceedances seen in the extreme value theory literature (see chapter 6 of \citealp{kulik2020}).

Two potential improvements for this paper seem to lie in the weakening of conditions and the augmentation of the class of functions for which the central limit theorem holds (Theorem~\ref{t:bst_clt}). There are likely only improvements to be made in the latter case, as the $\sum_{k = 1}^{\infty} \rho_{\X}(k) < \infty$ condition is only slightly stronger than the slowest mixing rate of $\sum_{k = 1}^{\infty} k^{-1}\rho_{\X}(k) < \infty$ for a conventional CLT to hold for a stationary sequence \citep{bradley1987}. The improvement of the second objective seemingly depends on a more precise treatment of the covariance in Proposition~\ref{p:cov}, which is rather tedious as it stands. Nonetheless, such improvements would see utility as the class of functions of persistence diagrams used in practice are large, which is what motivated Section~\ref{ss:unbounded} (and this paper) to begin with. Expanding the CLT results to a functional CLT for the persistent Betti numbers (as in \citealp{krebs_fclt}) may yield some progress towards this end, but we leave all the pursuits mentioned in these last two paragraphs for future work. 

\section{Proofs} \label{s:clt_proof}
In this section, we give the proof of Proposition~\ref{p:beta0st}, Lemma~\ref{l:conv_det}, state and prove Lemma~\ref{l:xi0}, and provide the proof of the central limit theorem, Theorem~\ref{t:bst_clt} (including Proposition~\ref{p:cov}). We begin with the proof of Proposition~\ref{p:beta0st}, which underpins our results. 
\begin{proof}[Proof of Proposition~\ref{p:beta0st}]

Fix both $-\infty < s < t < \infty$ and $n \geq 1$ and define the quantity $L$ to be the number of instances where $X_{k,n} > t$, $k = 1, \dots, n+1$. Now set $E^{(0)}_t := 0$ and define
$$
E^{(\ell)}_t = \min \bigg\{k > E^{(\ell-1)}_t: X_{k, n} > t\bigg\}, \quad \ell = 1, \dots, L.
$$
Note that by this definition, $E^{(L)}_t = n+1$. Define 
$$
A_\ell := \{E^{(\ell-1)}_t+1 \leq k \leq E^{(\ell)}_t-1: X_{k,n} \leq s\}
$$
for $\ell = 1, \dots, L$ and let $h_1, \dots, h_J$ be the subsequence of $1, \dots, L$ such that $A_{h_j} \neq \emptyset$ for $j = 1, \dots,  J$. Finally, define $\ell_j := \inf{A_{h_j}}$. This yields the collection of vertices (chains) $v_{\ell_1}, \dots, v_{\ell_J}$ in $Z_0(K_n(s)) \equiv C_0(K_n(s))$ 

Now, let us consider a generic chain 
$$
c = \sum_{\ell=1}^n a_\ell v_\ell \ind{X_{\ell, n} \leq s}
$$
in $Z_0(K_n(s))$. By the definition of quotient vector space, two classes 
$$
[c], [c'] \in Z_0(K_n(s))/\big(B_0(K_n(t)) \cap Z_0(K_n(s))\big)
$$
of chains $c, c' \in Z_0(K_n(s))$ are equivalent if $c+c' \in B_0(K_n(t))$. That is, $[c] = [c']$ if 
\begin{align*}
c + c' &= \sum_{\ell=1}^{n-1} b_{\ell} \ind{X_{\ell,n} \vee X_{\ell+1,n} \leq t} (v_{\ell} + v_{\ell+1})
\end{align*}
for some $b_1, \dots, b_{n-1} \in \mathbb{Z}_2$. 
We assert that the $[v_{\ell_j}]$, $\ell = 1, \dots, J$ are linearly independent. Consider that 
$$
\sum_{j=1}^{J} a_j [v_{\ell_j}] = \Bigg[\sum_{j=1}^{J} a_jv_{\ell_j} \Bigg] = [0], 
$$
if and only if 
\begin{equation}\label{e:linind}
\sum_{j=1}^{J} a_jv_{\ell_j} = \sum_{\ell=1}^{n-1} b_{\ell} \ind{X_{\ell,n} \vee X_{\ell+1,n} \leq t} (v_{\ell} + v_{\ell+1}),
\end{equation}
for some $b_\ell \in \mathbb{Z}_2$, $\ell = 1, \dots, n-1$. (Note that we omit the indicator terms $\ind{X_{\ell_j,n} \leq s}$ to reduce notational clutter). Suppose that there is some $a_j = 1$ in \eqref{e:linind}. To include $v_{\ell_j}$ in the lefthand side of \eqref{e:linind}, we must have either $b_{\ell_j-1} = 1$ or $b_{\ell_j+1} = 1$ on the righthand side (but not both). Therefore, either $v_{\ell_j-1}$ or $v_{\ell_j+1}$ is included on the righthand side. These terms cannot be on the left-hand side of \eqref{e:linind} as it holds that $\ell_{j+1} \geq \ell_j + 2$ for all $j = 1, \dots, J-1$. \\

If $X_{\ell_j-1, n} > t$ (resp. $X_{\ell_j+1, n} > t$) then we cannot get rid of this vertex, as we cannot include $v_{\ell_j-2} + v_{\ell_j-1}$ (resp. $v_{\ell_j+1} + v_{\ell_j+2}$) in the righthand side of \eqref{e:linind} (as this would require $X_{\ell_j - 1, n} \leq t$ or $X_{\ell_j+1, n} \leq t$). Thus we must have $a_j = 0$. If $X_{\ell_j-1, n} \leq t$ (resp. $X_{\ell_j+1, n} \leq t$), to get rid of $v_{\ell_j-1}$ or (resp. $v_{\ell_j+1}$) we must have $b_{\ell_j-2} = 1$ (resp. $b_{\ell_j+2} = 1$) and thus include $v_{\ell_j-2}$ or $v_{\ell_j+2}$. Continuing this way, we will end up with a $v_{k}$ on the righthand side of \eqref{e:linind} such $\ell_{j-1} < k < \ell_{j+1}$ and $X_{k,n} > t$. To get rid of such a vertex, we would have to have $X_{k,n} \leq t$, a contradiction. \\

Therefore we can conclude that $a_j = 0$ for all $j =  1, \dots, J$ and that the $[v_{\ell_j}]$ are linearly independent. It only remains to show that they span the quotient space $Z_0(K_n(s))/\big(B_0(K_n(t)) \cap Z_0(K_n(s))\big)$. Considering $[c]$ as above (once again omitting the indicator functions), we must have that 
$$
\sum_{\ell=1}^{n-1} a_{\ell}[v_\ell] = \sum_{j=1}^{J} a'_j[v_{\ell_j}]
$$
for some $a'_j \in \mathbb{Z}_2$, $j = 1, \dots, J$. It will suffice to show that $[v_{i_1} + \cdots + v_{i_M}] = a'_j[v_{\ell_j}]$ for any $i_1, \dots, i_M \in A_{h_j}$. It is straightforward to see that $[v_{\ell_j}] = [v_{i}]$ for any other $i \in A_{h_j}$ as 
$$
v_{\ell_j} + v_i = \sum_{k=\ell_j}^{i-1} \ind{X_{k,n} \vee X_{k+1, n} \leq t}(v_k + v_{k+1})
$$
where $\ind{X_{k,n} \vee X_{k+1, n} \leq t} = 1$ for all $k = \ell_j, \dots, i-1$ because 
$$
E^{(h_j-1)}_t+1 \leq \ell_j \leq i \leq E^{(h_j)}_t-1.
$$ 
If we suppose that $M = 2N$ for some positive integer $N$, then 
$$
[v_{i_1} + \cdots + v_{i_M}] = \sum_{m=1}^N ([v_{i_{2m-1}}] + [v_{i_{2m}}]) = \sum_{m=1}^N (v_{\ell_j}] + [v_{\ell_j}]) = 0,
$$
because $1+1 = 0$ in $\mathbb{Z}_2$. Otherwise, if $M = 2N+1$ for a nonnegative integer $N$, we have that
$$
[v_{i_1} + \cdots + v_{i_M}] = [v_{i_M}] + \sum_{m=1}^N ([v_{2i_m-1}] + [v_{2i_m}]) = [v_{i_M}] = [v_{\ell_j}]. 
$$
Thus, $[v_{i_1}], \dots, [v_{i_J}]$ forms a basis for $Z_0(K_n(s))/\big(B_0(K_n(t)) \cap Z_0(K_n(s))\big)$ and $\beta_{0,n}^{s,t} = J$, where $J$ is the number of $A_i$ that are non-empty. For each non-empty $A_\ell$, $\ell = 1, \dots, L$ there exists a unique $j$ and $i$ such that for $j-1$, and $j+i$ we have that $X_{j-1, n} \wedge X_{j+i, n} > t$, $X_{j,n} \vee \cdots \vee X_{j+i-1,n} \leq t$ and $X_{k,n} \leq s$ for some $k$ in the set $\{j, \dots, j+i-1\}$. Thus, \eqref{e:beta0st} follows. 

For the case when $t = \infty$, if there is any $k = 1, \dots, n$ such that $X_{k, n} \leq s$, then it follows from a simple argument---as used to demonstrate we have a spanning set in the finite $t$ case---that $[v_k]$ spans $Z_0(K_n(s))/\big(B_0(K_n(\infty)) \cap Z_0(K_n(s))\big)$.
\end{proof}

We now turn our attention to the proof of Lemma~\ref{l:conv_det}.

\begin{proof}[Proof of Lemma~\ref{l:conv_det}]
We will adapt the proof of Theorem A.2 from \cite{hiraoka2018}. 
First, it is clear that $\mathcal{R}$ is closed under finite intersections, so we have satisfied the first condition of Theorem 2.4 in \cite{billingsley} (i.e. that $\mathcal{R}$ is a $\pi$-system). It is also evident that $\Delta$ is separable. Now, for any $z \in \Delta$, $\epsilon > 0$ if we denote 
\[
\mathcal{R}_{z, \epsilon} := \{R \in \mathcal{R}: z \in R^{\circ} \subset R \subset B(z, \epsilon)\}, 
\]
then the class of boundaries $\partial \mathcal{R}_{z, \epsilon}$ contains uncountably many disjoint sets, regardless of if $z = (s, \infty)$ or $(s, t)$, where $t < \infty$ (in the former case $R^{\circ} = (s_1, s_2) \times (t_1, \infty]$). Thus $\mathcal{R}$ is a convergence-determining class by Theorem 2.4 of \cite{billingsley}. 

For the final part of the proof, let us fix a probability measure $\mu$ and choose an open set $U \subset \Delta$. Note that for every $z \in U$, there is an $\epsilon > 0$ such that $B(z, \epsilon) \subset U$. By the first part of this proof, for each of these $B(z, \epsilon)$ there exists a set $R_z \equiv R^U_z \in \mathcal{R}_{z, \epsilon}$ such that $\mu(\partial R_z) = 0$ and hence we have 
\[
U = \bigcup_{z \in U} R_z =  \bigcup_{z \in U} R^{\circ}_z,
\]
and $U$ is the union of sets with $\mu$-null boundaries. Since $\Delta$ is separable, there exists a countable subcover $\{R^U_{z_i}\}_{i=1}^{\infty}$ of $U$. Also, there exists a countable basis $\{U_j\}_{j=1}^{\infty}$ of $\Delta$. Hence, if we denote $R_{i,j} := R^{U_j}_{z_i}$ then
\[
U_j = \bigcup_{i=1}^{\infty} R_{i,j} = \bigcup_{i=1}^{\infty} R^{\circ}_{i,j}.
\]
If we let $\mathcal{R}_\mu$ be the class of finite intersections of the sets $R_{i,j}$. As the boundary of an intersection is a subset of the union of the boundaries, each element of $\mathcal{R}_\mu$ has a $\mu$-null boundary. Furthermore, every open set in $\Delta$ is the countable union of elements of $\mathcal{R}_\mu$. Hence, we apply Theorem 2.2 in \cite{billingsley} and the result holds. 

%

\end{proof}

Important to our SLLN was the existence of a certain limiting probability measure, whose existence and uniqueness we now tackle. 

\begin{lemma}\label{l:xi0}
The set function $\bar{\xi}_0$ on $\mathcal{R}$ defined at \eqref{e:setfunc} extends uniquely to a probability measure $\xi_0$ on the measure space $(\Delta, \mathcal{B}(\Delta))$.
\end{lemma}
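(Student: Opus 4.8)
The plan is to obtain $\xi_0$ as the Carath\'eodory--Hahn extension of $\bar{\xi}_0$ from the class $\mathcal R$. I would first record that $\mathcal R$ is a semiring: it is closed under finite intersections (as noted in the proof of Lemma~\ref{l:conv_det}), every sub-rectangle of an element of $\mathcal R$ again lies in $\mathcal R$ (the constraint $s_2\le t_1$ being inherited), and a difference $R\setminus R'$ with $R,R'\in\mathcal R$ is a finite disjoint union of such sub-rectangles; moreover $\sigma(\mathcal R)=\mathcal B(\Delta)$, since the proof of Lemma~\ref{l:conv_det} already writes every open subset of $\Delta$ as a countable union of elements of $\mathcal R$. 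On $\mathcal R$ the set function $\bar{\xi}_0$ is nonnegative, vanishes at $\emptyset$, is bounded by $1$, and is finitely additive; these properties are inherited from the probability measures $\mu_n:=\xi_{0,n}/\xi_{0,n}(\Delta)$, because --- as established early in the proof of Theorem~\ref{t:strong} (see \eqref{e:setR}) --- $\mu_n(R)\to\bar{\xi}_0(R)$ almost surely for each $R\in\mathcal R$, while each $\mu_n$ is finitely additive with total mass $1$. Alternatively, all of this follows by direct computation from \eqref{e:setR} together with $\beta_{0,n}^{s,t}\ge 0$.

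The essential step --- and the one I expect to be the main obstacle --- is upgrading finite additivity to countable additivity on $\mathcal R$. For this I would run the classical compactness argument for contents on a semiring: it suffices to prove countable subadditivity, and for that one needs inner and outer regularity of $\bar{\xi}_0$ on $\mathcal R$. Concretely, given $R\in\mathcal R$ and $\delta>0$ one should produce $R'\in\mathcal R$ with $\overline{R'}$ compact, $\overline{R'}\subset R$, and $\bar{\xi}_0(R)-\bar{\xi}_0(R')<\delta$, and given $R_k\in\mathcal R$ produce $R_k^\ast\in\mathcal R$ with $R_k\subset (R_k^\ast)^\circ$ and $\bar{\xi}_0(R_k^\ast)-\bar{\xi}_0(R_k)<\delta\,2^{-k}$; then whenever $R\subset\bigcup_k R_k$, compactness of $\overline{R'}$ gives a finite subcover by the $(R_k^\ast)^\circ$, and finite subadditivity (a consequence of finite additivity on the semiring) closes the estimate. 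Both regularity claims reduce to showing that $(s,t)\mapsto\E[Y_2^\infty(s,t)]$ is continuous on $\{s\le t\le\infty\}$ and tends to $0$ as $s\to-\infty$ and as $t\to\infty$. This is a dominated-convergence argument: $Y_2^\infty(s,t)$ is $\{0,1\}$-valued and dominated by $\ind{X_1>t}$, and since $F$ is continuous, for almost every $\omega$ the map $(s,t)\mapsto Y_2^\infty(s,t)(\omega)$ is continuous at any fixed $(s_0,t_0)$ --- the only way it could jump is if $X_k\in\{s_0,t_0\}$ for some $k$, a null event. With countable additivity secured, the Carath\'eodory--Hahn theorem extends $\bar{\xi}_0$ to a measure $\xi_0$ on $\mathcal B(\Delta)$, and uniqueness follows from $\sigma$-finiteness, as $\Delta$ is a countable union of elements of $\mathcal R$ each of finite $\bar{\xi}_0$-measure.

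Finally I would verify $\xi_0(\Delta)=1$. The inequality $\xi_0(\Delta)\le 1$ is immediate: $\xi_0$ coincides with $\lim_n\mu_n$ on finite disjoint unions of $\mathcal R$-sets, each of $\mu_n$-measure at most $1$, and $\Delta$ is the increasing union of such sets. For $\xi_0(\Delta)\ge 1$ I would rule out loss of mass to $\{x=-\infty\}$ and to the diagonal $\{x=y\}$. Fix $\delta>0$; the sets $(a+k\ell,a+(k+1)\ell]\times(a+(k+1)\ell,\infty]$ for $0\le k\le K$ are disjoint members of $\mathcal R$ whose union $G$ satisfies $\Delta\setminus G\subset\{b\le a\}\cup\{0<d-b\le\ell\}\cup\{b>a+K\ell\}$. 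By Proposition~\ref{p:diag_pts} and Birkhoff's ergodic theorem, $\mu_n(\{b\le a\})$ and $\mu_n(\{b>a+K\ell\})$ converge almost surely to $\P(X_2<X_1\wedge X_3,\,X_2\le a)/\P(X_2<X_1\wedge X_3)$ and $\P(X_2<X_1\wedge X_3,\,X_2>a+K\ell)/\P(X_2<X_1\wedge X_3)$; and since a persistence point of lifetime $\le\ell$ is born at a local minimum $X_i$ whose death is at least $X_{i-1}\wedge X_{i+1}$ (visible from the indicator $\ind{X_{j-1,n}\wedge X_{j+i,n}>t}$ in \eqref{e:beta0st}), so that $X_{i-1}\wedge X_{i+1}\le X_i+\ell$, we get $\mu_n(\{0<d-b\le\ell\})\le\xi_{0,n}(\Delta)^{-1}\sum_i\ind{X_{i,n}<X_{i-1,n}\wedge X_{i+1,n}\le X_{i,n}+\ell}\to\P(X_2<X_1\wedge X_3\le X_2+\ell)/\P(X_2<X_1\wedge X_3)$. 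Taking $a$ sufficiently negative, then $\ell$ sufficiently small (the event $\{X_2<X_1\wedge X_3\le X_2+\ell\}$ shrinks to $\emptyset$), then $K$ sufficiently large (as $\P(X_2>a+K\ell)\to0$) makes each of the three limiting fractions below $\delta$. Since $\xi_0(G)=\lim_n\mu_n(G)\ge 1-3\delta$ and $\delta$ was arbitrary, $\xi_0(\Delta)\ge 1$. Besides the countable-additivity step, this no-escape-of-mass argument is the other point that requires genuine care.
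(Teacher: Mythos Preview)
Your approach via Carath\'eodory extension is essentially sound and genuinely different from the paper's, though there is one small gap in the step you yourself flag as the main obstacle. You claim that for each $R_k \in \mathcal R$ one can find $R_k^\ast \in \mathcal R$ with $R_k \subset (R_k^\ast)^\circ$; but if $R_k = (s_1,s_2]\times(t_1,t_2]$ has $s_2 = t_1$ (touching the diagonal), then any $R_k^\ast = (s_1^\ast,s_2^\ast]\times(t_1^\ast,t_2^\ast] \in \mathcal R$ whose interior contains $R_k$ would require $s_2^\ast > s_2 = t_1 > t_1^\ast$, contradicting the constraint $s_2^\ast \le t_1^\ast$. This is easily patched---for instance by invoking the compact-class criterion for countable additivity of a content on a semiring (which needs only your inner approximation $R'\subset\overline{R'}\subset R$), or by first extending $\bar\xi_0$ to the ring generated by $\mathcal R$ and choosing $R_k^\ast$ there. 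Your no-escape-of-mass argument for $\xi_0(\Delta)\ge 1$ is correct as written.

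The paper takes a shorter route that sidesteps both the extension machinery and the separate total-mass argument. It works with the \emph{deterministic} measures $\mu_n := \E[\xi_{0,n}]/\big(n\,\P(X_2<X_1\wedge X_3)\big)$, proves tightness on $\Delta$ via the crude bounds $\xi_{0,n}(\{b<s\}) \le \sum_i \ind{X_i<s}$ and $\xi_{0,n}(\{d>t\}) \le \sum_i \ind{X_i>t}$, and then extracts subsequential weak limits by Prokhorov-type compactness. Since $\mu_n(R)\to\bar\xi_0(R)$ on $\mathcal R$ by bounded convergence and $\mathcal R$ is convergence-determining (Lemma~\ref{l:conv_det}), all subsequential limits coincide, so the full sequence converges weakly to a measure $\xi_0$ agreeing with $\bar\xi_0$ on $\mathcal R$; that $\xi_0(\Delta)=1$ is automatic from $\mu_n(\Delta)\to 1$ together with tightness. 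Your route is more constructive and surfaces the continuity of $(s,t)\mapsto\E[Y_2^\infty(s,t)]$ and the behavior near the diagonal explicitly; the paper's route offloads existence to weak-compactness theory and is noticeably shorter.
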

\begin{proof}
To prove that $\bar{\xi}_0$ extends uniquely to a probability measure $\xi_0$, we will first demonstrate that the sequence of (uniformly bounded) measures 
$$
\mu_n := \frac{\E [\xi_{0,n} ]}{n\P(X_2 < X_1 \wedge X_3)}, \quad n = 1, 2, \dots
$$
is \emph{tight} in the sense that for every $\epsilon > 0$ there is a compact $K \subset \Delta$ such that $\sup_n \mu_n(\Delta \setminus K) \leq \epsilon$. If $(\mu_n)_{n \geq 1}$ is tight then for every subsequence $(\mu_{n_i})_{i \geq 1}$ there is a further subsequence $(\mu_{n_{i_j}})_{j \geq 1}$ such that there is some measure $\mu$ (not necessarily with $\mu(\Delta) = 1$) such that 
$$
\mu_{n_{i_j}}(A) \to \mu(A), \quad j \to \infty,
$$
for all Borel $A \in \mathcal{B}(\Delta)$ such that $\mu(\partial A) = 0$ (Lemma 4.4 in \citealp{kallenberg}). To prove that $(\mu_n)_{n \geq 1}$ is tight, note that the set $K_{s,t} = \{ (b,d) \in \Delta: s \leq b \leq d \leq t \}$ is compact and its complement $\Delta\setminus K_{s,t}$ satisfies
\begin{equation} \label{e:Kst}
\Delta\setminus K_{s,t} = \{(b,d) \in \Delta: b < s\} \cup \{(b,d) \in \Delta: d > t\},
\end{equation}
where we denote the first set on the righthand side of \eqref{e:Kst} as $B_s$  and the second as $D_t$. It follows that 
\begin{align*}
\frac{\E[ \xi_{0,n}(\Delta\setminus K_{s,t})]}{{n\P(X_2 < X_1 \wedge X_3)}} &=  \frac{\E[ \xi_{0,n}(B_s) ] +\E[ \xi_{0,n}(D_t) ]}{n\P(X_2 < X_1 \wedge X_3)} \\
&\leq \frac{\E[ \sum_{i=1}^n \ind{X_i < s} ] + \E[ \sum_{i=1}^n \ind{X_i > t}]}{n\P(X_2 < X_1 \wedge X_3)} \\
&= \frac{\P(X_i < s) + \P(X_i > t)}{\P(X_2 < X_1 \wedge X_3)}.
\end{align*}
As each $X_i$ are random variables and by definition finite almost surely, we may take $s$ small enough and $t$ large enough so that $\P(X_i < s) + \P(X_i > t) \leq \epsilon\P(X_2 < X_1 \wedge X_3)$. Thus, $(\mu_n)_{n \geq 1}$ is tight and for any subsequence $(n_i)_{i \geq 1}$ there is a further subsequence $(n_{i_j})_{j \geq 1}$ and a measure $\mu$ on the measure space $(\Delta, \mathcal{B}(\Delta))$ such that $\mu_{n_{i_j}}(A) \to \mu(A)$ for all $A$ with $\mu(\partial A) = 0$. However, the bounded convergence theorem implies that 
$$
\frac{\E [\xi_{0,{n_{i_j}}}(R) ]}{j\P(X_2 < X_1 \wedge X_3)} \to \bar{\xi}_0(R), 
$$
for each $R \in \mathcal{R}$. We have also established that
$$
\frac{\E [\xi_{0,n_{i_j}}(\Delta) ]}{j\P(X_2 < X_1 \wedge X_3)} \to 1,
$$
so that $\mu$ is a probability measure. Therefore, any subsequence of $(\mu_n)_{n \geq 1}$ has a subsequence that converges weakly to some $\mu$, but as $\mathcal{R}$ is a convergence-determining class for weak convergence, then any two probability measures on $(\Delta, \mathcal{B}(\Delta))$ which agree on $\mathcal{R}$ are identical. By the above argument and Theorem~2.6 in \cite{billingsley} we can conclude that there exists a unique probability measure $\xi_0$ such that $\mu_n \Rightarrow \xi_0$ as $n \to \infty$ which crucially satisfies $\xi_0(R) = \bar{\xi}_0(R)$ for $R \in \mathcal{R}$. 
\end{proof}

For the proof of our central limit theorem, we will employ Theorem 2.1 from \cite{neumann2013}, which establishes a CLT for potentially nonstationary weakly dependent triangular arrays. As mentioned at the beginning of Section~\ref{s:clt}, it is sufficient to show that 
\begin{equation} \label{e:cwbeta}
n^{-1/2} \sum_{l=1}^m a_l \bigg( \beta_{0,n}^{s_l, t_l} - \E[ \beta_{0,n}^{s_l, t_l} ] \bigg),
\end{equation}
converges to a Gaussian distribution for each $a_1, \dots, a_m \in \R$, to establish our desired convergence. Recall that at \eqref{e:ymjst} we defined the indicator (Bernoulli) random variable $Y^{m}_{j,n}(s,t)$ and on the following line we noticed that 
$$
\beta_{0,n}^{s,t} = \sum_{j=1}^n Y^{n-j+1}_{j,n}(s,t),
$$
so that \eqref{e:cwbeta} is equal to 
$$
n^{-1/2} \sum_{j=1}^n \sum_{l=1}^m a_l \bigg( Y_{j,n}^{n-j+1}(s_l, t_l) - \E[ Y_{j,n}^{n-j+1}(s_l, t_l) ] \bigg). 
$$
For the proof the CLT it is convenient for us to establish first a CLT for a truncated version of the persistent Betti numbers---as was done in the proof of the  Betti number CLT for the critical regime in the geometric setting, in Theorem 4.1 of \cite{owada_thomas}. Define first
$$
\beta_{0,n, K}^{s,t} = \sum_{j=1}^n Y^{(n-j+1) \wedge K}_{j,n}(s,t)
$$
Therefore, if we define 
$$
W_{j,n} := n^{-1/2}\sum_{l=1}^m a_l \bigg( Y_{j,n}^{(n-j+1) \wedge K}(s_l, t_l) - \E[ Y_{j,n}^{(n-j+1) \wedge K}(s_l, t_l) ] \bigg),
$$
establishing Theorem~\ref{t:bst_clt} amounts to establishing a CLT for $\sum_{j=1}^n W_{j,n}$ for each $K$ then showing that the difference between $\beta_{0,n}^{s,t}$ and $\beta_{0,n, K}^{s,t}$ disappears in probability. We will now quote the theorem which we will use to establish this. 


\begin{theorem}[Theorem~2.1 in \citealp{neumann2013}]\label{t:tri_clt}
Suppose that $(W_{j,n})_{j=1}^n$ with $n \in \N$ is a triangular array of random variables with $\E[W_{j,n}] = 0$ for all $j, n$ and $\sup_n \sum_{j=1}^n \E[W_{j,n}^2] \leq M$ for some $M < \infty$. Suppose further that
\begin{equation} \label{e:neu1}
\Var\bigg( \sum_{j=1}^n W_{j, n} \bigg) \to \sigma^2, \quad n \to \infty,
\end{equation}
for some $\sigma^2 \geq 0$, and that for every $\epsilon > 0$ we have
\begin{equation} \label{e:neu2}
\sum_{k=1}^n \E[W_{j,n}^2 \ind{ |W_{j,n} | > \epsilon } ] \to 0, \quad n \to \infty.
\end{equation}
Furthermore, assume that there exists a summable sequence of $\theta_r$, $r \in \N$, such that for all $q \in N$ and indices $1 \leq u_1 < u_2 < \cdots < u_q + r = v_1 \leq v_2 \leq n$, the following upper bounds for covariances hold true: 
\begin{equation} \label{e:neu3}
\Big | \mathrm{Cov}\big( g(W_{u_1, n}, \dots, W_{u_q, n})W_{u_q, n}, W_{v_1, n}\big) \Big | \leq \theta_r \big( \E[W_{u_q,n}^2] + \E[W_{v_1,n}^2] + n^{-1} \big)
\end{equation}
and
\begin{equation} \label{e:neu4}
\Big | \mathrm{Cov}\big( g(W_{u_1, n}, \dots, W_{u_q, n}), W_{v_1, n}W_{v_2, n}\big) \Big | \leq \theta_r \big( \E[W_{v_1,n}^2] + \E[W_{v_2,n}^2] + n^{-1}  \big)
\end{equation}
for all measurable $g: \R^q \to \R$ with $\sup_{x \in \R^q} |g(x)| \leq 1$. Then
$$
 \sum_{j=1}^n W_{j, n} \Rightarrow N(0, \sigma^2), \quad n \to \infty.
$$
\end{theorem}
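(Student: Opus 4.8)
The plan is to prove this abstract central limit theorem by the characteristic-function (differential-equation) method, which converts the one-sided covariance bounds \eqref{e:neu3}--\eqref{e:neu4} into a quantitative decoupling of the summands. Write $S_n := \sum_{j=1}^n W_{j,n}$ and $\varphi_n(t) := \E\big[e^{itS_n}\big]$. The bound $\sup_n \sum_{j=1}^n \E[W_{j,n}^2] \le M$ guarantees that $S_n$ has uniformly bounded variance, that $\varphi_n$ is continuously differentiable, and that $\varphi_n'(t) = i\sum_{j=1}^n \E\big[W_{j,n} e^{itS_n}\big]$. The goal is to prove that, for each fixed $t$,
\[
\varphi_n'(t) + \sigma^2 t\, \varphi_n(t) \longrightarrow 0, \qquad n \to \infty .
\]
Granting this, the auxiliary function $g_n(t) := \varphi_n(t)\,e^{\sigma^2 t^2/2}$ satisfies $g_n(0)=1$ and $g_n'(t) = \big(\varphi_n'(t)+\sigma^2 t\,\varphi_n(t)\big)e^{\sigma^2 t^2/2} \to 0$ uniformly on compact $t$-intervals, so $g_n(t)\to 1$, i.e. $\varphi_n(t)\to e^{-\sigma^2 t^2/2}$; L{\'e}vy's continuity theorem then yields $S_n \Rightarrow N(0,\sigma^2)$. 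Since \eqref{e:neu1} gives $\sigma_n^2 := \Var(S_n)\to\sigma^2$, it is equivalent (and more convenient) to show $\varphi_n'(t) + \sigma_n^2 t\,\varphi_n(t) \to 0$.

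The central estimate is therefore $i\sum_{j=1}^n \E\big[W_{j,n}e^{itS_n}\big] = -\,t\,\sigma_n^2\,\varphi_n(t) + o(1)$, where $\sigma_n^2 = \sum_{j=1}^n \E[W_{j,n} S_n]$. To obtain it I would, for a fixed truncation width $r$, split $S_n = U_j^{(r)} + V_j^{(r)}$ with $U_j^{(r)} := \sum_{k:\,|k-j|\le r} W_{k,n}$ the block near $j$ and $V_j^{(r)}$ the distant remainder, and Taylor-expand $e^{itU_j^{(r)}} = 1 + itU_j^{(r)} + \mathcal{E}_j$ in
\[
\E\big[W_{j,n}e^{itS_n}\big] = \E\big[W_{j,n}\,e^{itU_j^{(r)}}\,e^{itV_j^{(r)}}\big] .
\]
The constant term equals $\E\big[W_{j,n}e^{itV_j^{(r)}}\big] = \Cov\big(W_{j,n}, e^{itV_j^{(r)}}\big)$ (as $\E[W_{j,n}]=0$), a correlation between $W_{j,n}$ and a bounded functional of variables at index-distance exceeding $r$, which is small. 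The first-order term is $it\,\E\big[W_{j,n}U_j^{(r)}e^{itV_j^{(r)}}\big]$; decoupling the distant factor $e^{itV_j^{(r)}}$ and replacing $\E\big[e^{itV_j^{(r)}}\big]$ by $\varphi_n(t)$ reduces it to $it\,\varphi_n(t)\sum_{k:\,|k-j|\le r}\E[W_{j,n}W_{k,n}]$. Summing over $j$ and multiplying by $i$ produces $-t\,\varphi_n(t)\sum_{j=1}^n\sum_{|k-j|\le r}\E[W_{j,n}W_{k,n}]$, and as $r\to\infty$ this truncated covariance sum converges to $\sigma_n^2$ because the tail covariances $\sum_{|k-j|>r}\E[W_{j,n}W_{k,n}]$ are controlled (via \eqref{e:neu4} with $g\equiv 1$) by a quantity summing to $O(\theta_r)$, and $\theta_r\to 0$.

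The three error families are then controlled as follows. The decoupling errors incurred when factoring $e^{itV_j^{(r)}}$ out of the products are bounded term-by-term by \eqref{e:neu3} (for the first-order product $W_{j,n}U_j^{(r)}$, which carries the single extra factor $W_{j,n}$) and \eqref{e:neu4} (for plain second-moment products), each contributing at most $\theta_r\big(\E[W_{j,n}^2]+\E[W_{k,n}^2]+n^{-1}\big)$; summing over the $O(n)$ indices $j$ and their $O(r)$ neighbours and using $\sup_n\sum_j\E[W_{j,n}^2]\le M$ leaves a bound of order $\theta_r$ after the iterated limit. The Taylor remainder satisfies $|\mathcal{E}_j|\le \min\{t^2 (U_j^{(r)})^2,\, 2|t\,U_j^{(r)}|\}$, so its contribution is handled by the Lindeberg condition \eqref{e:neu2}, which sends the large-value part to zero, together with the bounded-second-moment bound on the bulk. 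Finally, replacing $\E\big[e^{itV_j^{(r)}}\big]$ by $\varphi_n(t)$ costs at most $|t|\,\E\big|U_j^{(r)}\big|$ per term, and since $\E\big[W_{j,n}U_j^{(r)}\big]$ and $\sqrt{\E[(U_j^{(r)})^2]}$ are both small under the scaling $\sum_j\E[W_{j,n}^2]\le M$, the summed replacement error is $o(1)$ for each fixed $r$ as $n\to\infty$. Sending $n\to\infty$ first and then $r\to\infty$ completes the estimate.

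The main obstacle will be executing the decoupling uniformly over the $O(n)$ summands while respecting the \emph{one-sidedness} of the hypotheses: conditions \eqref{e:neu3}--\eqref{e:neu4} bound covariances only when the conditioning block $(W_{u_1,n},\dots,W_{u_q,n})$ precedes the target variables with a prescribed gap, so the distant functional $e^{itV_j^{(r)}} = e^{itV_j^{(r),-}}e^{itV_j^{(r),+}}$ cannot be separated from $W_{j,n}$ in a single step---it must be peeled one variable at a time (applying the bounds to real and imaginary parts, each bounded by $1$), every peel producing a covariance of the admissible form and a factor $\theta_\bullet$, with the accumulated error summable precisely because $\sum_r \theta_r<\infty$. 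Organising this sequential peeling so that each of the $O(n)$ terms contributes an error that still vanishes after summation---which is exactly what the scaling $\theta_r\big(\E[W_{\cdot,n}^2]+n^{-1}\big)$ in \eqref{e:neu3}--\eqref{e:neu4} is engineered to deliver---is the delicate heart of the argument; the ODE comparison, the Taylor estimates, and L{\'e}vy continuity are then routine.
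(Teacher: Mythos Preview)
The paper does not prove this theorem at all: it is quoted verbatim as Theorem~2.1 of \cite{neumann2013} and used as a black box in the proof of Theorem~\ref{t:bst_clt}. There is therefore no ``paper's own proof'' to compare your attempt against.

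For what it is worth, your sketch is in the right spirit---Neumann's original proof does proceed via the Lindeberg method with a characteristic-function expansion and a sequential decoupling that exploits the one-sided covariance bounds \eqref{e:neu3}--\eqref{e:neu4}. But several of your error estimates are too loose as written: for instance, summing $\theta_r\big(\E[W_{j,n}^2]+\E[W_{k,n}^2]+n^{-1}\big)$ over $O(n)$ indices $j$ and $O(r)$ neighbours $k$ gives a bound of order $r\theta_r$, not $\theta_r$, and $r\theta_r$ need not vanish under the sole hypothesis $\sum_r\theta_r<\infty$. The actual argument in \cite{neumann2013} avoids this by not fixing a single truncation width $r$ but instead peeling one index at a time and summing the resulting telescoping errors directly against $\sum_r\theta_r$. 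Your description of this peeling in the final paragraph is closer to the truth, but the earlier block-truncation heuristic would need to be replaced by it rather than supplemented. If you want a self-contained proof, consult \cite{neumann2013} directly; for the purposes of the present paper, no proof is required.
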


\begin{proof}[Proof of Theorem~\ref{t:bst_clt}]
The finite-dimensional CLT proof for $\beta_{0,n,K}^{s,t}$ follows by checking that the conditions of Theorem~\ref{t:tri_clt} hold for our setup. First, we notice that 
\begin{align*}
&W_{j,n}^2 = n^{-1}\sum_{l_1 = 1}^m \sum_{l_2 = 1}^m a_{l_1}a_{l_2}\bigg( Y_{j,n}^{(n-j+1) \wedge K}(s_{l_1}, t_{l_1}) - \E[ Y_{j,n}^{(n-j+1) \wedge K}(s_{l_1}, t_{l_1}) ] \bigg) \\
&\phantom{W_{j,n}^2 = n^{-1}\sum_{l_1 = 1}^m \sum_{l_2 = 1}^m a_{l_1}a_{l_2}} \times \bigg( Y_{j,n}^{(n-j+1) \wedge K}(s_{l_2}, t_{l_2}) - \E[ Y_{j,n}^{(n-j+1) \wedge K}(s_{l_2}, t_{l_2}) ] \bigg),
\end{align*}
so that 
$$
\E[W_{j,n}^2]  = n^{-1}\sum_{l_1 = 1}^m \sum_{l_2 = 1}^m a_{l_1}a_{l_2} \Cov\Big( Y_{j,n}^{(n-j+1) \wedge K}(s_{l_1}, t_{l_1}), Y_{j,n}^{(n-j+1) \wedge K}(s_{l_2}, t_{l_2})\Big),
$$
which is bounded above by 
$$
n^{-1} \bigg( \sum_{l=1}^m |a_l| \sqrt{\Var(Y_{j,n}^{(n-j+1) \wedge K}(s_{l}, t_{l})}) \bigg)^2 \leq Mn^{-1},
$$
for $M := (\sum_l |a_l|)^2$ by the inequalities $|\Cov(X,Y)| \leq \sqrt{\Var(X)\Var(Y)}$ and $\Var(\mathbf{1}_A) \leq \P(A) \leq 1$. Thus, the condition $\sup_n \sum_{j=1}^n \E[W_{j,n}^2] \leq M < \infty$ holds. If we note that 
$$
\Var\bigg( \sum_{j=1}^n W_{j, n} \bigg) = n^{-1} \sum_{l_1=1}^m \sum_{l_2=1}^m a_{l_1}a_{l_2}\Cov\Big( \beta_{0,n, K}^{s_{l_1}, t_{l_1}}, \beta_{0,n, K}^{s_{l_2}, t_{l_2}} \Big), 
$$
then $\Var\big( \sum_{j=1}^n W_{j, n} \big)$ converges to some limit $\sigma^2$ via arguments analogous to and much simpler than those of Proposition~\ref{p:cov}. Thus, \eqref{e:neu1} is satisfied. If we use the triangle inequality, we can see that for each $j$
$$
|W_{j,n}| \leq 2n^{-1/2}\sum_{l=1}^m |a_l| = 2(Mn)^{-1/2}
$$
using the trivial indicator random variable bound $\mathbf{1}_A \leq 1$. Therefore when $n \geq 4(M/\epsilon)^2$ we have that $\ind{ |W_{j,n}| > \epsilon } = 0$ so that \eqref{e:neu2} holds as well. To finish the proof, we must show that \eqref{e:neu3} and \eqref{e:neu4} hold in Theorem~\ref{t:tri_clt} above. For both situations, we can ignore the case for $r \leq K + 1$, as we can set $\theta_r$ arbitrarily large in this case to get the required bounds in this case. Therefore, suppose that $r > K+1$, so that $W_{u_q, n}$ only depends on indices up to $u_q + K$ and $W_{v_1, n}$ only depends on indices starting at $v_1 - 1 = u_q + r - 1 > u_q + K$.

We will only demonstrate \eqref{e:neu3}, as \eqref{e:neu4} follows by a similar, simpler argument. For a fixed set of indices $u_1, \dots, u_q$ and fixed $n$ let us denote $G := g(W_{u_1, n}, \dots, W_{u_q, n})$. By the bilinearity of covariance, it will suffice to establish the required bounds in \eqref{e:neu3} for a single summand in
\begin{align}
n^{-1}\sum_{l_1 = 1}^m \sum_{l_2 = 1}^m a_{l_1}a_{l_2} \Cov\Big( G\big\{Y_{u_q,n}^{(n-u_q+1) \wedge K}(s_{l_1}, t_{l_1}) - \E[Y_{u_q,n}^{(n-u_q+1) \wedge K}(s_{l_1}, t_{l_1})] \big\},& \notag \\
\phantom{n^{-1}\sum_{l_1 = 1}^m \sum_{l_2 = 1}^m a_{l_1}a_{l_2} \Cov\Big(} Y_{v_1,n}^{(n-v_1+1) \wedge K}(s_{l_2}, t_{l_2})-\E[ Y_{v_1,n}^{(n-v_1+1) \wedge K}(s_{l_2}, t_{l_2})] \Big)&. \label{e:neu_cov}
\end{align}
provided that such a bound is uniform in $l_1, l_2$. It can be shown that the covariance term in \eqref{e:neu_cov} is equal to 
\begin{align}
&\Cov\Big(GY_{u_q,n}^{(n-u_q+1) \wedge K}(s_{l_1}, t_{l_1}), Y_{v_1,n}^{(n-v_1+1) \wedge K}(s_{l_2}, t_{l_2})\Big) \notag \\
&\qquad \qquad \qquad- \E[Y_{u_q,n}^{(n-u_q+1) \wedge K}(s_{l_1}, t_{l_1})]\Cov(G, Y_{v_1,n}^{(n-v_1+1) \wedge K}(s_{l_2}, t_{l_2})), \label{e:abs_neu}
\end{align}
and the absolute value of \eqref{e:abs_neu} can be bounded above by 
$$
|\Cov\Big(GY_{u_q,n}^{(n-u_q+1) \wedge K}(s_{l_1}, t_{l_1}), Y_{v_1,n}^{(n-v_1+1) \wedge K}(s_{l_2}, t_{l_2})\Big)| + |\Cov(G, Y_{v_1,n}^{(n-v_1+1) \wedge K}(s_{l_2}, t_{l_2}))|. 
$$
Because $GY_{u_q,n}^{(n-u_q+1) \wedge K}(s_{l_1}, t_{l_1}) = g^*(W_{u_1, n}, \dots, W_{u_q, n})$, for $g^*$ measurable and $\sup_x |g^*(x)| \leq 1$, the required bound will follow provided we find a suitable bound for the quantity $ |\Cov(G, Y_{v_1,n}^{(n-v_1+1) \wedge K}(s_l, t_l))|$. By definition of $\rho$-mixing and the trivial bound $\Var(X) \leq E[X^2]$ we have
\begin{align*}
|\Cov(G, Y_{v_1,n}^{(n-v_1+1) \wedge K}(s_l, t_l))| &\leq \rho_{\X}(r-1-K)\sqrt{\Var(G)}\sqrt{\Var(Y_{v_1,n}^{(n-v_1+1) \wedge K}(s_l, t_l))} \\
&\leq \rho_{\X}(r-1-K).
\end{align*}
By assumption, $\sum_{r > K+1} \rho_{\X}(r-1-K) < \infty$ so that \eqref{e:neu3} is established. As alluded to earlier, the proof for \eqref{e:neu4} follows in exactly the same way, hence
\begin{equation} \label{e:betti_trunc_clt}
n^{-1/2} \sum_{l=1}^m a_l \bigg( \beta_{0,n, K}^{s_l, t_l} - \E[ \beta_{0,n, K}^{s_l, t_l} ] \bigg) \Rightarrow N(0, \sigma_K^2), \quad n \to \infty
\end{equation}
for all $K \in \N$. As the dominated convergence assumption holds true in Proposition~\ref{p:cov}, it is straightforward to see that $\sigma_K^2 \to \sigma^2$ as $K \to \infty$, where $\sigma^2$ is the limiting variance of \eqref{e:cwbeta}. Hence $N(0, \sigma_K^2) \Rightarrow N(0, \sigma^2)$ as $K \to \infty$ as well (using L{\'e}vy's continuity theorem, for example). Theorem 3.2 in \cite{billingsley} will yield the rest if we can show that 
$$
\lim_{K \to \infty} \limsup_{n \to \infty} \P( | Z_{n,K} - Z_n | \geq \epsilon) = 0,
$$
where $Z_n$ is the sum of persistent Betti numbers in \eqref{e:cwbeta} and $Z_{n,K}$ is the $K$-truncated version on the left-hand side of \eqref{e:betti_trunc_clt}. An application of Chebyshev's inequality and the covariance inequality yields
\begin{align*}
\P( | Z_n - Z_{n,K} | \geq \epsilon) &\leq \frac{\E \big | Z_n - Z_{n,K} \big |^2}{\epsilon^2} \notag \\
&= \frac{1}{\epsilon^2 n}\Var\bigg( \sum_{l=1}^m a_l \Big[ \beta_{0,n}^{s_l, t_l} - \beta_{0,n, K}^{s_l, t_l} \Big] \bigg)  \notag\\
&\leq  \frac{1}{\epsilon^2}\bigg( \sum_{l=1}^m a_l \sqrt{n^{-1}\Var\big( \beta_{0,n}^{s_l, t_l} - \beta_{0,n, K}^{s_l, t_l} \big)}\Bigg)^2. \label{e:bill_cond}
\end{align*}
The quantity $n^{-1}\Var\big( \beta_{0,n}^{s_l, t_l} - \beta_{0,n, K}^{s_l, t_l} \big)$ converges to a limit defined by the terms \eqref{e:covsum1}, \eqref{e:covsum2}, and \eqref{e:covsum3} below with the restriction that $i_1, i_2 > K$. As each of the sums in \eqref{e:covsum1}, \eqref{e:covsum2}, and \eqref{e:covsum3} are absolutely convergent, their restriction with $i_1, i_2 > K$ tends to 0 as $K \to \infty$, and the CLT follows.

Finally, for any $A \in \mathcal{B}(\tilde{\Delta})$ 
\[
\xi_{0,n}(A) = \tilde{\xi}_{0,n}(A)
\]
so that if each coordinate of $R_l$ is in $\R$, then $R_l \in \mathcal{B}(\tilde{\Delta})$ and the result is proved for the restricted persistence diagram as well.
\end{proof}
We finish this section with a proof of the limiting covariance seen in Proposition~\ref{p:cov}, which we will break into a few lemmas. \\

\noindent\emph{Proof of Proposition~\ref{p:cov}}. 
Let us define
\begin{equation*}
C^n_{i,j}(s,t) := \mathbf{1} \bigg\{  \bigvee_{k=j}^{j+i-1} X_{k,n} \leq t, \bigwedge_{k=j}^{j+i-1} X_{k,n} \leq s \bigg \}\ind{X_{j-1, n} \wedge X_{j+i, n} > t},
\end{equation*}
where $C_{i,j}(s,t) \equiv C^{\infty}_{i,j}(s,t)$ is analogously defined for the entire sequence $\X$. Therefore we have
\[
\beta_{0,n}^{s, t} = \sum_{j=1}^n \sum_{i=1}^{n-j+1} C^n_{i,j}(s,t) = \sum_{i=1}^n \sum_{j=1}^{n-i+1} C^n_{i,j}(s,t). \\
\]
Thus, it follows that
\begin{align}
&\mathrm{Cov}\Big(\beta_{0,n}^{s_1, t_1}, \beta_{0,n}^{s_2, t_2}\Big) \notag \\
&\qquad = \E\big[ \beta_{0,n}^{s_1, t_1} \beta_{0,n}^{s_2, t_2} \big] - \E[\beta_{0,n}^{s_1, t_1}] \E[\beta_{0,n}^{s_2, t_2}] \notag \\
&\qquad = \sum_{i_1, i_2} \sum_{j_1, j_2} \E[\cconest \cctwost] - \E[\cconest]\E[\cctwost], \label{e:ccov_st}
\end{align}
where $i_1, i_2 = 1, \dots, n$ with $j_1 = 1, \dots, n-i_1+1$, and $j_2 = 1, \dots, n-i_2+1$. We may then break \eqref{e:ccov_st} into
\begin{align}
&\sum_{i_1, i_2} \sum_{j=1}^{n-i_1\vee i_2+1} \E[C^{n}_{i_1,j}(s_1, t_1)C^{n}_{i_2,j}(s_2, t_2)] - \E[C^{n}_{i_1,j}(s_1, t_1)]\E[C^{n}_{i_2,j}(s_2, t_2)] \notag \\
&\qquad + \sum_{i_1, i_2} \sum_{k=1}^{n-i_2} \sum_{j=1}^{n-i_1 \vee (i_2+k)+1} \E[C^{n}_{i_1,j}(s_1, t_1)C^{n}_{i_2,j+k}(s_2, t_2)] - \E[C^{n}_{i_1,j}(s_1, t_1)]\E[C^{n}_{i_2,j+k}(s_2, t_2)] \notag \\
&\qquad + \sum_{i_1, i_2} \sum_{k=1}^{n-i_1} \sum_{j=1}^{n-i_2\vee (i_1+k)+1} \E[C^{n}_{i_1,j+k}(s_1, t_1)C^{n}_{i_2,j}(s_2, t_2)] - \E[C^{n}_{i_1,j+k}(s_1, t_1)]\E[C^{n}_{i_2,j}(s_2, t_2)] \label{e:bigcov1}.
\end{align}
For now, we will exclude the boundary terms from each sum---which use $X_{0,n}$ and $X_{n+1,n}$. We will treat the boundary terms later. The nonboundary terms of the expression \eqref{e:bigcov1} can thus be simplified based on the assumed stationarity of $\X_n$ to be
\begin{align}
&\sum_{i_1, i_2} (n-i_1\vee i_2-1) \Big(\E[C^n_{i_1,2}(s_1, t_1)C^n_{i_2,2}(s_2, t_2)] - \E[C^n_{i_1,2}(s_1, t_1)]\E[C^n_{i_2,2}(s_2, t_2)] \Big) \notag \\
&\ + \sum_{i_1, i_2} \sum_{k=1}^{n-i_2} (n-i_1 \vee (i_2+k)-1) \Big(\E[C^n_{i_1,2}(s_1, t_1)C^n_{i_2,2+k}(s_2, t_2)] - \E[C^n_{i_1,2}(s_1, t_1)]\E[C^n_{i_2,2+k}(s_2, t_2)]\Big) \notag \\
&\ + \sum_{i_1, i_2} \sum_{k=1}^{n-i_1} (n-i_2\vee (i_1+k)-1) \Big(\E[C^n_{i_1,2+k}(s_1, t_1)C^n_{i_2,2}(s_2, t_2)] - \E[C^n_{i_1,2+k}(s_1, t_1)]\E[C^n_{i_2,2}(s_2, t_2)]\Big) \label{e:bigcov2}.
\end{align}
Dividing by $n$, we may express the first term in \eqref{e:bigcov2} as 
\begin{equation} \label{e:term1_1}
\sum_{i_1=1}^{\infty} \sum_{i_2=1}^{\infty} (1-(i_1\vee i_2-1)/n)_+ \Big(\E[C^n_{i_1,2}(s_1, t_1)C^n_{i_2,2}(s_2, t_2)] - \E[C^n_{i_1,2}(s_1, t_1)]\E[C^n_{i_2,2}(s_2, t_2)] \Big).
\end{equation}
Assuming we can show that
\[
\sum_{i_1=1}^{\infty} \sum_{i_2=1}^{\infty} \Big| \E[C_{i_1,2}(s_1, t_1)C_{i_2,2}(s_2, t_2)] - \E[C_{i_1,2}(s_1, t_1)]\E[C_{i_2,2}(s_2, t_2)]\Big| < \infty,
\]
where we drop the superscript $n$ as mentioned at the start of Section~\ref{s:clt}, then \eqref{e:term1_1} will converge to 
\begin{equation}\label{e:covsum1}
\sum_{i_1=1}^{\infty} \sum_{i_2=1}^{\infty} \E[C_{i_1,2}(s_1, t_1)C_{i_2,2}(s_2, t_2)] - \E[C_{i_1,2}(s_1, t_1)]\E[C_{i_2,2}(s_2, t_2)].
\end{equation}
Similarly, we will get limits of 
\begin{equation}\label{e:covsum2}
\sum_{i_1=1}^{\infty} \sum_{i_2=1}^{\infty} \sum_{k=1}^{\infty} \E[C_{i_1,2}(s_1, t_1)C_{i_2,2+k}(s_2, t_2)] - \E[C_{i_1,2}(s_1, t_1)]\E[C_{i_2,2+k}(s_2, t_2)],
\end{equation}
and
\begin{equation}\label{e:covsum3}
\sum_{i_1=1}^{\infty} \sum_{i_2=1}^{\infty} \sum_{k=1}^{\infty} \E[C_{i_1,2+k}(s_1, t_1)C_{i_2,2}(s_2, t_2)] - \E[C_{i_1,2+k}(s_1, t_1)]\E[C_{i_2,2}(s_2, t_2)],
\end{equation}
for the second and third terms in \eqref{e:bigcov2}, provided the dominated convergence assumption holds for each of these cases. In fact, these three sums comprise the limit of the covariance. However, to establish that, we must ensure that the ``boundary terms'' vanish, which we do in Lemma~\ref{l:vanish_terms}. A useful fact will aid in the proof of the covariance limit above and the lemma below.

\begin{lemma}\label{l:cov_disappear}
Fix $k \geq 0$. Suppose that $i_2 + k > i_1$ and $k \leq i_1$, then for any values of $t_1, t_2$ we have
\[
C^{n}_{i_1,j}(s_1, t_1)C^{n}_{i_2,j+k}(s_2, t_2) = 0.
\]
Analogously, if $i_1 + k > i_2$ and $k \leq i_2$, then for any values of $t_1, t_2$ we have
\[
C^{n}_{i_1,j+k}(s_1, t_1)C^{n}_{i_2,j}(s_2, t_2) = 0.
\]
\end{lemma}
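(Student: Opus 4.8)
The plan is to prove both identities by contradiction, extracting from the two indicator events a pair of incompatible strict inequalities between $t_1$ and $t_2$.

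First I would unpack the events. Assuming $C^n_{i_1,j}(s_1,t_1)\,C^n_{i_2,j+k}(s_2,t_2)=1$, the definition of $C^n_{i,j}$ yields, from the first factor, that $X_{l,n}\le t_1$ for every index $l$ in the \emph{body} $\{j,\dots,j+i_1-1\}$ of the first block, together with $X_{j-1,n}>t_1$ and $X_{j+i_1,n}>t_1$ at its two \emph{boundary} indices; and, from the second factor, that $X_{l,n}\le t_2$ for $l\in\{j+k,\dots,j+k+i_2-1\}$, with $X_{j+k-1,n}>t_2$ and $X_{j+k+i_2,n}>t_2$. The birth constraints involving $s_1,s_2$ are irrelevant here and I would discard them.

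The core of the argument is a short index computation showing that the arithmetic hypotheses pin down a vertex that is simultaneously a boundary index of one block and a body index of the other. The hypotheses $k\le i_1$ and $i_2+k>i_1$ give $j+k\le j+i_1\le j+k+i_2-1$, so the index $j+i_1$ — the right boundary of the first block, hence carrying $X_{j+i_1,n}>t_1$ — lies inside the body of the second block, hence also satisfies $X_{j+i_1,n}\le t_2$; this forces $t_1<t_2$. Symmetrically, when $k\ge 1$ the index $j+k-1$ — the left boundary of the second block, hence $X_{j+k-1,n}>t_2$ — lies in the body $\{j,\dots,j+i_1-1\}$ of the first block, hence $X_{j+k-1,n}\le t_1$; this forces $t_2<t_1$. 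The two conclusions are contradictory, so the product vanishes. For the second identity I would simply interchange the two blocks — the block of length $i_1$ now at position $j+k$ and the block of length $i_2$ at position $j$ — so that the hypotheses $i_1+k>i_2$ and $k\le i_2$ become the mirror image of the ones above and the identical two-inequality argument applies with $i_1\leftrightarrow i_2$.

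I expect the only delicate point to be the index bookkeeping in the boundary-overlap configurations — in particular, verifying which of the two inequalities survives when $j+k$ coincides with $j+i_1$ (the second block starting exactly at the right boundary vertex of the first) and isolating the degenerate situation where the two blocks share a left endpoint; away from those the contradiction is immediate. There is no analytic content at all in this lemma, only a careful reading of the index ranges appearing in the definition of $C^n_{i,j}(s,t)$.
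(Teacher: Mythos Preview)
Your argument is the same as the paper's: both extract, from the two indicator events, a pair of indices $l,l'$ forcing the incompatible inequalities $t_1<t_2$ and $t_2<t_1$. You are more explicit than the paper in naming the indices ($l'=j+i_1$ and $l=j+k-1$), but the logic is identical.

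One remark on the degenerate case you flag. For $k\ge 1$ your two inequalities go through exactly as written. For $k=0$, however, the left boundary $j+k-1=j-1$ of the second block is \emph{not} in the body of the first block, so the inequality $t_2<t_1$ cannot be derived; in fact the lemma as literally stated fails at $k=0$ (take $i_1=1$, $i_2=2$, $j=2$, $t_1=s_1=0$, $t_2=s_2=1$, and $X_1=X_4=2$, $X_2=-1$, $X_3=1/2$). This is a harmless misstatement rather than a gap in the paper: every application of the lemma (in Lemma~\ref{l:vanish_terms} and in the dominated-convergence bounds) has $k\ge 1$, the $k=0$ diagonal having been split off separately in \eqref{e:bigcov1}. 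So your instinct that the shared-left-endpoint configuration is special was correct --- it is simply outside the lemma's real scope.
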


\begin{proof}
Note that if $i_2 + k > i_1$ and $k \leq i_1$, then it must be the case that there exists indices $l, l'$ such that if $C_{i_1,j}(s_1,t_1)C_{i_2,j+k}(s_2,t_2)=1$ then 
\[
X_l \leq t_1, X_l > t_2 \text{ and } X_l' > t_1, X_l' \leq t_2,
\]
a contradiction because $t_1 > t_2$ and $t_2 > t_1$ cannot simultaneously hold---even if $t_1=t_2$. The proof for the second case follows by the same argument.
\end{proof}

\begin{lemma} \label{l:vanish_terms}
If $\X$ is a $\rho$-mixing stationary stochastic process that is max-root summable then the boundary terms in \eqref{e:bigcov1} are $o(n)$ as $n \to \infty$.
\end{lemma}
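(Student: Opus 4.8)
The plan is to show the boundary terms sum not merely to $o(n)$ but to $O(1)$. Abbreviate $G(i;t):=\P(X_1\le t,\dots,X_i\le t)$. Since the corners obey $F(t_1\vee t_2)<1$, both $t_1,t_2$ are finite and $F(t_r)<1$, so max-root summability gives $\sum_{i\ge1}i\sqrt{G(i;t_r)}<\infty$ for $r=1,2$ (hence also $\sum_i\sqrt{G(i;t_r)}<\infty$). The first step is a crude uniform bound on every relevant covariance: because $C^n_{i,j}(s,t)$ forces the block $X_j,\dots,X_{j+i-1}$ to lie below $t$, stationarity gives $\E[C^n_{i,j}(s,t)]\le G(i;t)$ (the value being $0$ when the block runs past index $n$), and Cauchy--Schwarz gives $\E[C^n_{i_1,j_1}(s_1,t_1)C^n_{i_2,j_2}(s_2,t_2)]\le\sqrt{G(i_1;t_1)G(i_2;t_2)}$; combining these, every boundary covariance in \eqref{e:bigcov1} is at most $2\sqrt{G(i_1;t_1)G(i_2;t_2)}$.

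Next I would identify the boundary terms. A summand of \eqref{e:bigcov1} is a boundary term exactly when one of its two indicators involves $X_{0,n}$ or $X_{n+1,n}$, i.e.\ when the location index is $1$ or one of the two blocks ends at index $n$. For the first sum ($j_1=j_2=j$ with $1\le j\le n-(i_1\vee i_2)+1$) this occurs only at $j=1$ and $j=n+1-(i_1\vee i_2)$, so at most two values; hence its total boundary contribution is bounded by $4\sum_{i_1,i_2}\sqrt{G(i_1;t_1)G(i_2;t_2)}=4\big(\sum_i\sqrt{G(i;t_1)}\big)\big(\sum_i\sqrt{G(i;t_2)}\big)<\infty$. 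For the second sum ($j_2=j_1+k$, $k\ge1$) and any fixed $(i_1,i_2,k)$, the boundary values of $j:=j_1$ are $1$ together with whichever of $n+1-i_1$ and $n+1-k-i_2$ lies in range---again at most two; the third sum is the mirror image with $i_1$ and $i_2$ interchanged.

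For the second sum (and symmetrically the third) I would split on the size of $k$. If $k\ge i_1+2$, the index set $\{j-1,\dots,j+i_1\}$ of the first indicator and the set $\{j+k-1,\dots,j+k+i_2\}$ of the second are separated by a gap $\ge1$, so $\rho$-mixing and $\Var(\mathbf 1_A)\le\P(A)$ give $|\Cov|\le\rho_\X(k-i_1-1)\sqrt{G(i_1;t_1)G(i_2;t_2)}$; summing over such $k$ produces a factor $\sum_{\ell\ge1}\rho_\X(\ell)<\infty$ (and even with only $\rho_\X(\ell)\to0$ the factor $\sum_{\ell=1}^n\rho_\X(\ell)$ is $o(n)$, which would still suffice). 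If $k\le i_1+1$ there are at most $i_1+1$ choices of $k$, and the crude bound $2\sqrt{G(i_1;t_1)G(i_2;t_2)}$ from the first step applies to each (so Lemma~\ref{l:cov_disappear} is not even needed here). Summing over the at most two boundary values of $j$ and then over $(i_1,i_2)$, the whole second-sum boundary contribution is bounded by a constant times
\[
\Big(\sum_{\ell}\rho_\X(\ell)\Big)\Big(\sum_i\sqrt{G(i;t_1)}\Big)\Big(\sum_i\sqrt{G(i;t_2)}\Big)+\Big(\sum_i(i+1)\sqrt{G(i;t_1)}\Big)\Big(\sum_i\sqrt{G(i;t_2)}\Big),
\]
which is finite by max-root summability. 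Adding the three sums shows the boundary terms are $O(1)$, hence $o(n)$.

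The main obstacle is the overlapping/adjacent regime $k\le i_1+1$ (and its counterpart in the third sum), where $\rho$-mixing is unavailable and one is forced onto the crude Cauchy--Schwarz bound: summing over the (few) such $k$ costs a factor $i_1$, and it is precisely this that forces use of the full strength of max-root summability---the weight $i$ in $\sum_i i\sqrt{\P(\cdots)}<\infty$ rather than mere summability of $\sqrt{\P(\cdots)}$---so some care is needed there. A secondary, essentially combinatorial, point requiring care is the enumeration above: one must check that in each of the three sums only a bounded number of location indices produce a boundary effect, since that is what prevents the number of boundary terms from being of order $n$.
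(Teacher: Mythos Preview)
Your argument is correct and follows the same overall structure as the paper's proof: identify that for each fixed $(i_1,i_2)$ (and $k$, in the second and third sums) only the two extremal values of the location index $j$ are boundary terms, then split the $k$-sum into the separated regime $k\ge i_1+2$ (handled by $\rho$-mixing) and the overlapping regime $k\le i_1+1$ (handled by Cauchy--Schwarz and max-root summability).

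The one genuine difference is your treatment of the overlapping regime. The paper invokes Lemma~\ref{l:cov_disappear} to zero out the cross-moment when $i_1-i_2<k\le i_1$, and then partitions $\{1,\dots,i_1+1\}$ into three sub-ranges, bounding each separately (this is the decomposition at \eqref{e:bd_sum_final}). You instead apply the uniform Cauchy--Schwarz bound $|\Cov|\le 2\sqrt{G(i_1;t_1)G(i_2;t_2)}$ to all $i_1+1$ values of $k$ at once, which directly produces the factor $(i_1+1)\sqrt{G(i_1;t_1)}$ and is absorbed by max-root summability. This is a legitimate simplification: Lemma~\ref{l:cov_disappear} is not needed for the boundary terms (the paper's finer splitting buys nothing here, since both routes end up dominated by $\sum_i i\sqrt{G(i;t)}$). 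Your observation that under the lemma's hypothesis of mere $\rho$-mixing the separated regime contributes $o(n)$ rather than $O(1)$ is also correct and matches what the paper actually uses.
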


\begin{proof}
The boundary terms \eqref{e:bigcov1} comprise those terms in the first sum that satisfy $j = 1$ or $j+(i_1 \vee i_2) = n+1$, the terms in the second sum satisfying $j=1$ or $j+i_1 \vee( i_2+k) = n+1$, and the terms in the third sum satisfying $j=1$, or $j+(i_1+k)\vee i_2 = n+1$. Thus, the boundary terms can be represented as 
\begin{align}
&\sum_{i_1, i_2} \Cov\big( C^n_{i_1,1}(s_1,t_1), C^n_{i_2,1}(s_2,t_2) \big) + \Cov\big( C^n_{i_1,n-i_1 \vee i_2+1}(s_1,t_1), C^n_{i_2,n-i_1 \vee i_2+1}(s_2,t_2) \big) \notag  \\
& +  \sum_{i_1, i_2} \sum_{k=1}^{n-i_2}   \Cov\big( C^n_{i_1,1}(s_1,t_1), C^n_{i_2,1+k}(s_2,t_2) \big) \notag \\
&\qquad \qquad \qquad  + \Cov\big( C^n_{i_1,n-i_1 \vee (i_2+k)+1}(s_1,t_1), C^n_{i_2,n-(i_1-k) \vee i_2+1}(s_2,t_2) \big) \notag \\
& +  \sum_{i_1, i_2} \sum_{k=1}^{n-i_1}  \Cov\big( C^n_{i_1,1+k}(s_1,t_1), C^n_{i_2,1}(s_2,t_2) \big) \notag \\
&\qquad \qquad \qquad + \Cov\big( C^n_{i_1,n-i_1\vee (i_2-k)+1}(s_1,t_1), C^n_{i_2,n-(i_1+k) \vee i_2+1}(s_2,t_2) \big) \label{e:3bd}.
\end{align}
We may bound the absolute value of the first sum in \eqref{e:3bd} by
\begin{align*}
&\sum_{i_1, i_2} \sqrt{\Var(C^n_{i_1,1}(s_1,t_1))}\sqrt{\Var( C^n_{i_2,1}(s_2,t_2) )} \\
&\phantom{\sum_{i_1, i_2}}\qquad\qquad + \sqrt{\Var( C^n_{i_1,n-(i_1 \vee i_2)+1}(s_1,t_1))}\sqrt{\Var(C^n_{i_2,n-i_1 \vee i_2+1}(s_2,t_2))} \\
&\leq 2 \sum_{i_1, i_2} \sqrt{\P(X_1 \leq t_1, \dots, X_{i_1} \leq t_1)}\sqrt{\P(X_1 \leq t_2, \dots, X_{i_2} \leq t_2)} < \infty,
\end{align*}
and thus $o(n)$---where we use the inequalities $|\Cov(X,Y)| \leq \sqrt{\Var(X)\Var(Y)}$, $\Var(\mathbf{1}_A) \leq \P(A)$, and the fact that $\X$ is max-root summable. We now will finish the proof by showing that the second sum in \eqref{e:3bd} is $o(n)$ as well. That the third sum in \eqref{e:3bd} is $o(n)$ follows by an essentially symmetric proof. We may bound the absolute value of the second sum in \eqref{e:3bd} by 
\begin{align}
2&\sum_{i_1, i_2} \sum_{k=i_1+2}^{n-i_2} \rho_\X(k-i_1-1)  \sqrt{\Var(C^n_{i_1,1}(s_1,t_1))}\sqrt{\Var( C^n_{i_2,1}(s_2,t_2) )}\notag \\
+&\sum_{i_1, i_2} \sum_{k=1}^{i_1+1}  \Big| \Cov\big( C^n_{i_1,1}(s_1,t_1), C^n_{i_2,1+k}(s_2,t_2) \big) \Big| \notag \\
&\qquad \qquad \qquad + \Big| \Cov\big( C^n_{i_1,n-i_1 \vee (i_2+k)+1}(s_1,t_1), C^n_{i_2,n-(i_1-k) \vee i_2+1}(s_2,t_2) \big) \Big|. \label{e:bd_2bd}
\end{align}
The first sum in \eqref{e:bd_2bd} follows from the definition of $\rho$-mixing and the fact that $F(s_i) > 0$ and $F(t_i) < 1$. Dividing the aforementioned first sum by $n$ we see that 
\begin{align*}
&2n^{-1}\sum_{i_1, i_2} \sum_{k=i_1+2}^{n-i_2} \rho_\X(k-i_1-1) \sqrt{\Var(C^n_{i_1,1}(s_1,t_1))}\sqrt{\Var( C^n_{i_2,1}(s_2,t_2) )} \\
&\qquad \leq 2n^{-1}\sum_{k=1}^n \rho_\X(k) \sum_{i_1, i_2} \sqrt{\P(X_1 \leq t_1, \dots, X_{i_1} \leq t_1)}\sqrt{\P(X_1 \leq t_2, \dots, X_{i_2} \leq t_2)} \\
\end{align*}
which tends to $0$ as $n \to \infty$ by max-root summability and the fact that $\rho_\X(k) \to 0$. The second sum in \eqref{e:bd_2bd} is a little more delicate. Before continuing, note that $\big| k \in \N: k > i_1 - i_2, \, k \leq i_1\big| = i_1 \wedge i_2 \leq i_1i_2$ when both terms are at least 1. Hence, Lemma~\ref{l:cov_disappear} implies that the second sum in \eqref{e:bd_2bd} equals
\begin{align}
&\sum_{i_2=1}^n \sum_{i_1=1}^{n} \sum_{k=1+(i_1-i_2)_+}^{i_1} \E[C^n_{i_1,1}(s_1,t_1)] \E[C^n_{i_2,1+k}(s_2,t_2)] \notag \\
&\phantom{\quad + \sum_{i_2=1}^n \sum_{i_1=i_2+1}^n \sum_{k=1}^{i_1-i_2}} \qquad + \E[C^n_{i_1,n-i_1 \vee (i_2+k)+1}(s_1,t_1)] \E[C^n_{i_2, n-(i_1-k) \vee i_2+1}(s_2,t_2)] \notag \\
&\quad + \sum_{i_2=1}^n \sum_{i_1=1}^{n-i_2-1}  \Big| \Cov\big( C^n_{i_1,1}(s_1,t_1), C^n_{i_2,i_1+2}(s_2,t_2) \big) \Big| + \Big| \Cov\big( C^n_{i_1,n-(i_1+i_2)}(s_1,t_1), C^n_{i_2,n-i_2+1}(s_2,t_2) \big) \Big| \notag \\
&\quad + \sum_{i_2=1}^n \sum_{i_1=i_2+1}^n \sum_{k=1}^{i_1-i_2} \bigg(  \Big| \Cov\big( C^n_{i_1,1}(s_1,t_1), C^n_{i_2,1+k}(s_2,t_2) \big) \Big| \label{e:bd_sum_final}. \\
&\phantom{\quad + \sum_{i_2=1}^n \sum_{i_1=i_2+1}^n \sum_{k=1}^{i_1-i_2}} \qquad + \Big| \Cov\big( C^n_{i_1,n-i_1+1}(s_1,t_1), C^n_{i_2,n-i_1+k+1}(s_2,t_2) \big) \Big|\bigg). \notag
\end{align}
We may bound the first term in \eqref{e:bd_sum_final} 
\begin{align*}
&2\sum_{i_2=1}^n \sum_{i_1=1}^{n} i_1i_2 \P(X_1 \leq t_1, \dots, X_{i_1} \leq t_1)\P(X_1 \leq t_2, \dots, X_{i_2} \leq t_2) \\
&\qquad = 2 \sum_{i_1=1}^{n} i_1 \P(X_1 \leq t_1, \dots, X_{i_1} \leq t_1) \sum_{i_2=1}^n i_2 \P(X_1 \leq t_2, \dots, X_{i_2} \leq t_2) \\
&\qquad = o(n)
\end{align*}
by the max-root summability condition. Furthermore, we can bound the second sum in \eqref{e:bd_sum_final} by 
\[
2 \sum_{i_1=1}^{n} \sqrt{\P(X_1 \leq t_1, \dots, X_{i_1} \leq t_1)} \sum_{i_2=1}^{n} \sqrt{\P(X_1 \leq t_2, \dots, X_{i_2} \leq t_2)} = o(n), 
\]
using the covariance inequality $\Cov(X,Y) \leq \sqrt{\Var(X)\Var(Y)}$, and again using the max-root summability condition. Finally, we bound the third sum in \eqref{e:bd_sum_final} by 
\begin{align*}
& 2 \sum_{i_2=1}^n \sum_{i_1 = i_2+1}^n (i_1-i_2) \sqrt{\P(X_1 \leq t_1, \dots, X_{i_1} \leq t_1)} \sqrt{\P(X_1 \leq t_2, \dots, X_{i_2} \leq t_2)} \\
&\leq 2 \sum_{i_1 = 1}^n i_1 \sqrt{\P(X_1 \leq t_1, \dots, X_{i_1} \leq t_1)} \sum_{i_2=1}^n \sqrt{\P(X_1 \leq t_2, \dots, X_{i_2} \leq t_2)} \\ 
&= o(n),
\end{align*}
by a final application of the max-root summability condition.
\end{proof}

Having shown that the boundary terms vanish under our conditions, it will suffice to show the dominated convergence condition for the terms in \eqref{e:bigcov2} divided by $n$, which will then tend to the sums of \eqref{e:covsum1}, \eqref{e:covsum2}, and \eqref{e:covsum3} respectively. First, we divide each term by $n$ and see that the first covariance term with absolute summands is bounded above (using again the usual covariance inequalities) by
\begin{align*}
&\sum_{i_1, i_2} \sqrt{\P(X_1 \leq t_1, \dots, X_{i_1} \leq t_1)} \sqrt{\P(X_1 \leq t_2, \dots, X_{i_2} \leq t_2)} \\ 
&\qquad =\sum_{i_1} \sqrt{\P(X_1 \leq t_1, \dots, X_{i_1} \leq t_1)} \sum_{i_2} \sqrt{\P(X_1 \leq t_2, \dots, X_{i_2} \leq t_2)} < \infty,
\end{align*}
by applying max-root summability for each sum. We now prove the dominated convergence assumption for the second sum (divided by $n$) in \eqref{e:bigcov2}, as the third sum follows an analogous proof. This procedure yields an upper bound of 
\begin{align}
& \sum_{i_1, i_2} \sum_{k=1}^{n-i_2} \Big| \Cov\big(C^{n}_{i_1,2}(s_1, t_1)C^{n}_{i_2,2+k}(s_2, t_2)\big) \Big| \notag \\
&\qquad \leq \sum_{i_1, i_2} \sum_{k=1}^{i_1+1} \Big| \Cov\big(C^{n}_{i_1,2}(s_1, t_1)C^{n}_{i_2,2+k}(s_2, t_2)\big) \Big| \notag \\
&\qquad+ \sum_{i_1, i_2} \sum_{k=i_1+2}^{n-i_2} \rho_{\X}(k-i_1-1) \sqrt{\Var(C^n_{i_1,2}(s_1,t_1))}\sqrt{\Var( C^n_{i_2,2}(s_2,t_2) )}. \label{e:cov2_dom}
\end{align}
The first sum in \eqref{e:cov2_dom} we may bound by 
\[
\sum_{i_1, i_2} (i_1+1) \sqrt{\P(X_1 \leq t_1, \dots, X_{i_1} \leq t_1)}\sqrt{\P(X_1 \leq t_2, \dots, X_{i_2} \leq t_2)} < \infty,
\]
by max-root summability of $\X$. The second sum in \eqref{e:cov2_dom} is bounded above by 
\begin{align*}
&\sum_{k=1}^{n} \rho_{\X}(k )\sum_{i_1, i_2} \sqrt{\Var(C^n_{i_1,1}(s_2,t_1))} \sqrt{\Var( C^n_{i_2,2}(s_2,t_2) )} \\
\leq &\sum_{k=1}^{\infty} \rho_{\X}(k ) \sum_{i_1, i_2} \sqrt{\P(X_1 \leq t_1, \dots, X_{i_1} \leq t_1)}\sqrt{\P(X_1 \leq t_2, \dots, X_{i_2} \leq t_2)}  < \infty.
\end{align*}
by assumption. 

As for the representation of $\lim_{n \to \infty} n^{-1}\mathrm{Cov}\Big(\beta_{0,n}^{s_1, t_1}, \beta_{0,n}^{s_2, t_2}\Big)$, we note that the sums \eqref{e:covsum1}, \eqref{e:covsum2}, and \eqref{e:covsum3} are all absolutely convergent, hence we may split the sums and apply the monotone convergence theorem to each, and recombine to get the stated representation. \hfill $\square$ \bigskip

\bibliographystyle{plainnat}
\bibliography{LimitDistributionsPHStationaryProcesses}

\end{document}